\numberwithin{equation}{section}
\numberwithin{figure}{section}
\DeclareTextFontCommand{\emph}{\color{blue}\em}
\definecolor {processblue}{cmyk}{0.96,0,0,0}
\def\theenumi{\@alph\c@enumi}
\theoremstyle{plain}
\newtheorem{theorem}[equation]{Theorem}
\newtheorem{lemma}[equation]{Lemma}
\newtheorem{corollary}[equation]{Corollary}
\newtheorem{proposition}[equation]{Proposition}
\theoremstyle{definition}
\newtheorem{remark}[equation]{Remark}
\newtheorem{example}[equation]{Example}
\newtheorem{definition}[equation]{Definition}
\newtheorem{notation}[equation]{Notation}
\newtheorem{discussion}[equation]{Discussion}
\newtheorem{observation}[equation]{Observation}
\newtheorem{construction}[equation]{Construction}
\newcommand{\sign}[1]{\mathrm{sgn(#1)}}
\newcommand{\rcf}[2]{\mathcal{R}_{\scriptscriptstyle #1,#2}}
\newcommand{\reval}[1]{\mathfrak{R}_{-,#1}}
\newcommand{\sorting}[1]{\operatorname{sort}(#1)}
\newcommand{\bare}[1]{\overline{e}_{#1}}
\DeclareMathOperator{\row}{rind}  
\DeclareMathOperator{\Gar}{Garc}
\title[Noniterative Straightening for Skew Schur Modules]{A Non-Iterative Straightening Algorithm and Orthogonality for Skew Schur Modules}
\author{Reuven Hodges}
\address{Department of Mathematics, University of Kansas, Lawrence, KS, 66045, USA}
\email{rmhodges@ku.edu}
\author{Hanzhang Yin}
\address{Department of Mathematics, University of Kansas, Lawrence, KS, 66045, USA}
\email{hanyin@ku.edu}
\date{\today}
\begin{document}

\begin{abstract}
We generalize Fulton's determinantal construction of Schur modules to the skew setting, providing an explicit and functorial presentation using only elementary linear algebra and determinantal identities, in parallel with the partition case \cite{97FW}. Building on the non-iterative straightening formula of the first author for partition shapes \cite{24RH}, we develop a non-iterative straightening algorithm for skew Schur modules that expresses arbitrary elements in a new D-basis with an explicit closed coefficient formula. We then show that this D-basis is the result of applying Gram-Schmidt orthogonalization to the semistandard tableau basis, which identifies a natural inner product on the skew Schur module and recasts straightening as an orthogonal projection.
\end{abstract}

\maketitle

\section{Introduction}

The classical straightening algorithm is a foundational normal-form procedure, furnishing canonical bases for determinantal ideals and invariant rings by systematically applying a finite repertoire of determinantal identities, a process now formalized and generalized by the theory of Gr\"{o}bner and SAGBI bases \cite{Hodge1943, SturmfelsAIT, GroebnerDeterminantal}. These determinantal identities, collectively known as the determinantal calculus~\cite{HodgePedoeVol2}, are grounded in the multilinear and alternating properties of the determinant and serve to codify the intricate relations and syzygies among minors.

This same algebraic machinery provides the modern foundation for constructing fundamental objects in the representation theory of the symmetric group $\mathfrak{S}_n$ and the general linear group $GL_n$. Here, the straightening algorithm is intuitively realized through the combinatorial language of Young tableaux, where it translates into a procedure for reducing an arbitrary integer filling of a Young diagram to a unique linear combination of semistandard Young tableaux (SSYT) \cite{97FW, 03WJ, 01BS}. In this setting, the abstract determinantal identities re-emerge as concrete combinatorial moves, namely, column alternation and the Garnir relations. The representation theory of $\mathfrak{S}_n$ and its Specht module theory can be traced through work of Young, Specht, and Garnir on tableaux, symmetrizers, and the Garnir relations \cite{Y01,S35,50GA}. On the $GL_n$ side, Schur and Weyl established the classical representation-theoretic framework, including Schur-Weyl duality and the polynomial representations, while Akin-Buchsbaum-Weyman gave a functorial, characteristic-free treatment that extends naturally to skew modules~\cite{Schur1901,W39,ABW82}. Taken together, these constructions formalize straightening as the mechanism behind standard bases in the representation theory of $\mathfrak{S}_n$ and $GL_n$, respectively.

This algebraic framework finds its geometric realization in standard monomial theory (SMT). The SMT program was initiated by Seshadri in the 1970s and, together with Lakshmibai and Musili, developed to generalize Hodge's standard monomial basis for the coordinate ring of the Grassmannian to the coordinate rings of partial flag varieties and their Schubert subvarieties \cite{SeshadriSMT,LMS}. The standard monomials of SMT correspond directly to SSYT, and the classical straightening relations on SSYT are precisely the rules needed to reduce arbitrary monomials to the standard monomial basis. Littelmann's path model proved the general SMT conjectures for all reductive groups \cite{Littelmann}. For broader context, the expository program of Kung-Rota and Rota-Stein situates straightening, via symbolic methods, bitableaux, and Young symmetrizers, as the normal-form mechanism in classical invariant theory, clarifying its structural role \cite{KungRota1984,PNASSteinRota}.

Despite its theoretical elegance, the iterative nature of the classical straightening algorithm presents a significant computational obstacle. The process, which involves repeatedly finding and resolving violations of the semistandard condition, often leads to an intermediate explosion in the size of the expressions, resulting in unstable and inefficient computations \cite{White1990}. From a computational standpoint, straightening appears throughout Sturmfels' account as the normalization routine for bracket and minor algebras and as the reduction step behind Gr\"{o}bner and SAGBI workflows in invariant theory \cite{SturmfelsAIT}. These computational aspects motivate the search for direct, non-iterative formulas that bypass the stepwise reduction process.

Recently, the first author provided such a non-iterative formula for fillings of partition shapes \cite{24RH}. The present paper extends this construction to the more general setting of skew Schur modules and demonstrates that the resulting basis has a natural geometric interpretation. We establish a determinantal framework and define a family of evaluation functionals that directly compute expansion coefficients. This approach yields a new basis that is not merely a combinatorial artifact; we demonstrate that it is precisely the basis obtained by applying the Gram-Schmidt process to the basis of SSYT with respect to a natural sesquilinear form. In this light, the straightening expansion in this new basis is revealed to be an orthogonal projection. This geometric perspective provides a unified and structurally natural explanation for the unitriangularity of the basis transformation and explicitly identifies the expansion coefficients as orthogonal coordinates. An orthonormal D-basis supplies canonical coordinates and orthogonal projections in the skew Schur module, providing fine-grained structural control over expansions and filtrations.

\subsection{Main Results} To state our main results, we introduce some minimal notation. Let $\lambda/\mu$ be a skew partition. We denote the set of integer fillings of this shape with a fixed content $z$ by $F(\lambda/\mu, z)$ and the subset of semistandard Young tableaux (SSYT) by $\operatorname{SSYT}(\lambda/\mu, z)$. Our object of study is the skew Schur module $E^{\lambda/\mu}$, which is constructed functorially from an $R$-module $E$. For any filling $F$, we denote its corresponding element in the module by $\bare{F}$. Definitions of the mathematical objects in this section appear in the preliminaries; notation follows that section throughout.

Our work begins by establishing a determinantal, linear-algebraic construction of the skew Schur module. This approach avoids the heavier algebraic machinery of existing general constructions \cite{03WJ} and instead parallels the accessible framework used by Fulton for Schur modules associated to partition shapes \cite{97FW}. A key feature of our construction is an explicit $R$-module homomorphism $\Psi: E^{\lambda/\mu} \to R[Z]$ from the module to a polynomial ring. Here, $R[Z]$ is the ring $R[Z_{i,j}]$, where $i$ ranges over the row indices present in the skew diagram $\lambda/\mu$ and $j$ ranges over the alphabet $[m]$. This homomorphism allows us to provide a new determinantal proof that the submodule corresponding to a fixed content $z$ has a basis indexed by the set of SSYT $\operatorname{SSYT}(\lambda/\mu, z)$, relative to the chosen basis of $E$. Moreover, this homomorphism underlies the proof of the non-iterative straightening formula in Theorem \ref{thm:manResult1}.

Our first main result is the central combinatorial achievement: a non-iterative straightening formula. To state it, we fix a total order (the reading-word order, $\prec$) on the SSYT, $S_n \prec \dots \prec S_2 \prec S_1$. From this, we recursively define a new basis, the D-basis $\{\bare{D_i}\}$, as a specific unitriangular transformation of the SSYT basis $\{\bare{S_i}\}$. The coefficients in the formula are given by rearrangement coefficients $\rcf{F}{S}$, which are the signed count of column permutations transforming the multiset of entries in each column of $F$ to that of $S$.

\begin{restatable}[Non-Iterative Straightening Formula]{theorem}{thmA}
\label{thm:manResult1}
For any filling $F \in F(\lambda/\mu, z)$, its expansion in the D-basis is given by:
\[ \bare{F} = \sum_{S_j \in \operatorname{SSYT}(\lambda/\mu, z)} \rcf{F}{S_j} \bare{D_j}. \]
\end{restatable}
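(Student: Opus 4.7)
The plan is to identify, for each filling $F$ and each SSYT $S_j$, the coefficient of $\bare{D_j}$ in the unique D-basis expansion of $\bare{F}$ by introducing a dual family of coefficient-extracting functionals. Since the D-basis is a unitriangular transformation of the SSYT basis with respect to $\prec$, such an expansion exists and is unique, so it suffices to compute one coefficient at a time.

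First, I would transport the problem into the polynomial ring $R[Z]$ via the homomorphism $\Psi: E^{\lambda/\mu} \to R[Z]$ constructed earlier in the paper: by design $\Psi(\bare{F})$ factors as a product of column determinants in the $Z_{i,j}$ variables, one per column of the skew shape. Expanding this product monomial-wise produces a signed sum indexed by column-wise permutations of the entries of $F$, with signs inherited from the alternating nature of each determinant. For each SSYT $S_j$ I would then define an evaluation functional $\reval{S_j}: E^{\lambda/\mu} \to R$ by reading off, in $\Psi(-)$, the coefficient of a fixed monomial canonically attached to $S_j$ (via its reading word). Unwinding the definitions shows that only those column permutations sending the column multisets of $F$ to those of $S_j$ contribute to the targeted monomial, and their signs combine to yield exactly the combinatorial definition of the rearrangement coefficient, so $\reval{S_j}(\bare{F}) = \rcf{F}{S_j}$ for every filling $F$.

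Next, I would verify by induction on $\prec$ that the $\reval{S_j}$ are dual to the D-basis, i.e.\ $\reval{S_j}(\bare{D_i}) = \delta_{ij}$. The base case handles the $\prec$-maximal SSYT; the inductive step invokes the recursive definition of $\bare{D_i}$ as $\bare{S_i}$ corrected by a specific combination of already-constructed $\bare{D_k}$ (with $k$ earlier in $\prec$) chosen precisely to kill each off-diagonal evaluation $\reval{S_k}$. Combined with the diagonal value $\reval{S_i}(\bare{S_i}) = \rcf{S_i}{S_i} = 1$ supplied by the first step, this delivers the Kronecker property. Applying $\reval{S_j}$ to the unique expansion $\bare{F} = \sum_i c_i \bare{D_i}$ then forces $c_j = \reval{S_j}(\bare{F}) = \rcf{F}{S_j}$, establishing the claimed formula.

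The main obstacle is the first step, the combinatorial identification $\reval{S_j}(\bare{F}) = \rcf{F}{S_j}$ for \emph{arbitrary} (non-semistandard) fillings. One must show that the monomials produced by expanding $\Psi(\bare{F})$ either match a column rearrangement of $F$ onto $S_j$, contributing to $\rcf{F}{S_j}$ with the correct sign, or do not contribute to the distinguished monomial of $S_j$ at all, and in particular that competing contributions from fillings with repeated column entries cancel cleanly. Carefully tracking these signs and cancellations, across all content-$z$ fillings simultaneously, is where the argument is most delicate.
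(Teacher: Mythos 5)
Your overall architecture is the same as the paper's: your coefficient-extracting functionals are exactly the maps $\reval{S_j}=\mathfrak{C}_{-,S_j}\circ\Psi$ of Subsection~\ref{subsec:RLinear}, the identification $\reval{S_j}(\bare{F})=\rcf{F}{S_j}$ is Proposition~\ref{prop:theRModuleHom} (a direct Leibniz expansion of $D_F$ over column permutations), and your duality claim $\reval{S_j}(\bare{D_i})=\delta_{ij}$ is equivalent to the paper's Lemma~\ref{lemma:mainTheorem1Standard}. Two points of calibration: the permutations $\underline{\pi}$ that contribute to the monomial attached to $S_j$ are those for which $F_{\underline{\pi}}$ has the same \emph{row} content as $S_j$ (the monomial only records row--entry data; column permutations never change column multisets), and the cancellation you single out as the delicate point is actually automatic --- if $F$ repeats an entry in a column then $D_F=0$ and $\rcf{F}{S_j}=0$ by a sign-reversing involution, so no careful bookkeeping is needed there.

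The genuine gap is in the step you treat as routine, the Kronecker property. The recursion $\bare{D_i}=\bare{S_i}-\sum_{k<i}\rcf{S_i}{S_k}\,\bare{D_k}$ together with your induction only kills $\reval{S_k}(\bare{D_i})$ for $k<i$. To conclude $c_j=\reval{S_j}(\bare{F})$ from $\bare{F}=\sum_i c_i\bare{D_i}$ you also need $\reval{S_j}(\bare{D_i})=0$ for $i<j$, and unwinding the recursion (using the inductive hypothesis on the earlier $\bare{D_k}$) this reduces precisely to the vanishing $\rcf{S_i}{S_j}=0$ whenever $S_j\prec S_i$; the base case $\bare{D_1}=\bare{S_1}$ already requires $\rcf{S_1}{S_j}=0$ for all $j>1$. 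You never state or prove this triangularity, and the diagonal value $\rcf{S_i}{S_i}=1$ is not ``supplied by the first step'': both are separate combinatorial facts about rearrangement coefficients of semistandard tableaux, namely Proposition~\ref{prop:rCoeffOrder}(i) and (ii), which the paper proves via Lemma~\ref{lemma:reorderIncreases} (any nontrivial column permutation of a column-strict filling strictly increases the row-sorted filling in reading-word order). With those two inputs supplied your induction does go through, and the argument becomes a reformulation of the paper's proof, which packages the same content as the expansion $\bare{S_i}=\sum_j\rcf{S_i}{S_j}\,\bare{D_j}$ followed by an application of the functionals $\reval{S_j}$.
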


Our second main result reveals that this combinatorial framework has a natural geometric interpretation. This requires specific assumptions on the base ring $R$: we assume it is a commutative ring equipped with an involutive automorphism $r \mapsto r^*$ such that the subring of fixed points is ordered, $rr^* \ge 0$ for all $r \in R$, and $rr^*=0$ only if $r=0$. We define a sesquilinear form $\langle \cdot, \cdot \rangle$ on the skew Schur module by specifying its values on the SSYT and D-bases as $\langle \bare{S_i}, \bare{D_j} \rangle := R_{S_i,S_j}$ and extending sesquilinearly.

\begin{restatable}[Geometric Interpretation and Orthogonality]{theorem}{thmB}
\label{thm:B}
The D-basis is the result of applying the Gram-Schmidt orthogonalization process to the SSYT basis $\{\bare{S_i}\}$, taken in the fixed reading-word order, under the form $\langle \cdot, \cdot \rangle$. Moreover, the D-basis is orthonormal:
\[ \langle \bare{D_i}, \bare{D_j} \rangle = \delta_{ij}. \]
Consequently, the skew Schur module $E^{\lambda/\mu}$ is a inner product $R$-module, and the straightening formula of Theorem~\ref{thm:manResult1} is an orthogonal projection.
\end{restatable}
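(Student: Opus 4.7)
The plan is to bootstrap everything off Theorem~\ref{thm:manResult1}, turning the combinatorial identity into a matrix identity and then reading off orthonormality, the Gram--Schmidt interpretation, and the projection statement in sequence.

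Let $A$ denote the matrix with entries $A_{ij} := R_{S_i,S_j} = \rcf{S_i}{S_j}$, which by Theorem~\ref{thm:manResult1} applied to $F = S_i$ is precisely the change-of-basis matrix $\bare{S_i} = \sum_j A_{ij}\,\bare{D_j}$. Because the D-basis is defined as a unitriangular transformation of the SSYT basis, $A$ is invertible; writing $B = A^{-1}$ gives $\bare{D_i} = \sum_j B_{ij}\,\bare{S_j}$. Using $\langle \bare{S_k}, \bare{D_j}\rangle = A_{kj}$ and linearity in the first slot, orthonormality is then a single line:
\[ \langle \bare{D_i}, \bare{D_j}\rangle = \sum_k B_{ik}\,\langle \bare{S_k}, \bare{D_j}\rangle = \sum_k B_{ik}\,A_{kj} = (BA)_{ij} = \delta_{ij}. \]
Since $A$ has integer entries, this computation is unaffected by which slot of the sesquilinear form carries the involution. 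From here the inner product claims follow by sesquilinear extension: for $v = \sum_i v_i\,\bare{D_i}$ and $w = \sum_j w_j\,\bare{D_j}$ the form reduces to $\langle v,w\rangle = \sum_i v_i\,w_i^{\,*}$, from which conjugate symmetry is immediate and positive definiteness comes directly from the hypotheses $rr^* \ge 0$ and $rr^* = 0 \iff r = 0$ on $R$.

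For the Gram--Schmidt identification, I would isolate the diagonal term in Theorem~\ref{thm:manResult1} using the unitriangularity $\rcf{S_i}{S_i} = 1$ and $\rcf{S_i}{S_j} = 0$ for $S_j \succ S_i$, rewriting it as
\[ \bare{D_i} = \bare{S_i} - \sum_{S_j \prec S_i} \rcf{S_i}{S_j}\,\bare{D_j} = \bare{S_i} - \sum_{S_j \prec S_i} \langle \bare{S_i}, \bare{D_j}\rangle\,\bare{D_j}. \]
This is exactly the normalized Gram--Schmidt update in the order $S_n \prec \cdots \prec S_1$; no denominator appears because $\|\bare{D_j}\|^2 = 1$ by the previous step. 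A descending induction with base case $\bare{D_n} = \bare{S_n}$ closes the identification. Finally, rewriting Theorem~\ref{thm:manResult1} for arbitrary $F$ as $\bare{F} = \sum_j \langle \bare{F}, \bare{D_j}\rangle\,\bare{D_j}$ is the Fourier / orthogonal projection formula in an orthonormal basis.

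The substantive work is front-loaded into Theorem~\ref{thm:manResult1}: once the straightening coefficients in the D-basis are identified with the rearrangement numbers, every clause of Theorem~\ref{thm:B} is forced by elementary linear algebra. The only mild subtlety I anticipate is bookkeeping with the involution, since the sesquilinearity convention a priori determines whether orthonormality appears as $(BA)_{ij}$ or $(B^*A)_{ij}$; but the integrality of $A$ (hence of $B$) collapses both versions to the same statement, so this is not a genuine obstacle.
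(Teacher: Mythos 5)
Your proposal is correct in substance and reaches orthonormality by a genuinely shorter route than the paper. Where the paper establishes $\langle\bare{D_i},\bare{D_j}\rangle=\delta_{ij}$ through a chain of inductions (Lemmas~\ref{lemma:initial_orthogonality}, \ref{lemma:orthogonality_less}, \ref{lemma:norm_property}, and \ref{lemma:orthogonality_greater}), you observe that Theorem~\ref{thm:manResult1} applied to $F=S_i$ (equivalently Lemma~\ref{lemma:mainTheorem1Standard}) identifies $A_{ij}=\rcf{S_i}{S_j}$ as the transition matrix expressing the SSYT basis in the D-basis, that $A$ is unitriangular hence invertible over the prime subring, and that linearity in the first slot of the defining formula for the form gives $\langle\bare{D_i},\bare{D_j}\rangle=(A^{-1}A)_{ij}=\delta_{ij}$ in one line; since $A$ has integer entries and the first slot carries no involution, the conjugation bookkeeping you mention is indeed harmless. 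The remaining steps, conjugate symmetry and positive definiteness via $\langle v,w\rangle=\sum_k v_k w_k^{*}$ in the orthonormal D-basis and the reading of Theorem~\ref{thm:manResult1} as $\bare{F}=\sum_j\langle\bare{F},\bare{D_j}\rangle\,\bare{D_j}$, coincide with Proposition~\ref{prop:general_symmetry}, Corollary~\ref{cor:positive_semidefinite}, and the paper's projection interpretation. Your route buys economy, bypassing the three orthogonality inductions; the paper's route never inverts $A$ and makes visible exactly which facts from Proposition~\ref{prop:rCoeffOrder} (vanishing versus normalization) drive each orthogonality relation.

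One orientation error in your Gram--Schmidt paragraph needs fixing. With the labeling $S_n\prec\cdots\prec S_1$, Proposition~\ref{prop:rCoeffOrder}(i) gives $\rcf{S_i}{S_j}=0$ when $S_j\prec S_i=\sorting{S_i}$, i.e.\ when $j>i$; the surviving off-diagonal terms in Theorem~\ref{thm:manResult1} are those with $S_j\succ S_i$ (that is, $j<i$), so the recursion is $\bare{D_i}=\bare{S_i}-\sum_{j<i}\langle\bare{S_i},\bare{D_j}\rangle\,\bare{D_j}$ with base case $\bare{D_1}=\bare{S_1}$, the $\prec$-largest tableau being processed first. You assert the vanishing for $S_j\succ S_i$, sum over $S_j\prec S_i$, and take base case $\bare{D_n}=\bare{S_n}$; read literally, the coefficients you retain are exactly the ones that vanish, so your recursion would collapse to $\bare{D_i}=\bare{S_i}$ for every $i$, which is false. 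The repair is purely a swap of the two directions (Gram--Schmidt runs descending in $\prec$, i.e.\ ascending in the index), after which your identification of the D-recursion with the normalized Gram--Schmidt update, with no denominators because $\langle\bare{D_j},\bare{D_j}\rangle=1$, is exactly Corollary~\ref{cor:gs-for-D-basis}.
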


\subsection{Practical Applications} The demand for efficient straightening algorithms is partly driven by their role as a core subroutine in computational representation theory. This is apparent in pipelines such as Young flattenings, which require stable, large-scale manipulations in a Schur-module basis, and in routines for problems related to Foulkes' conjecture that rely on repeated symmetrization \cite{YoungFlatteningsArxiv, IkenmeyerFoulkes}. Young flattenings are equivariant linear maps between Schur modules that yield determinantal equations for secant varieties and certify border-rank lower bounds \cite{OL13}. Non-iterative straightening has been applied in this context; the partition-case formula of \cite{24RH} was successfully implemented as part of the work in \cite{YoungFlatteningsArxiv}, yielding substantial performance improvements over classical iterative methods. In symmetrization computations underlying Foulkes-type problems, the task is to determine dimensions of distinguished subspaces in the quotient~\cite{IkenmeyerFoulkes}. These dimensions can be efficiently computed by comparing rearrangement-coefficient coordinate vectors in the D-basis, since subspace dimension is invariant under change of basis and the coordinates are computable without constructing the D-basis.

\subsection{Outline} The remainder of the paper is organized as follows. Section~\ref{sec:prelim} develops our determinantal construction of the skew Schur module and introduces the key homomorphism into a polynomial ring that underpins our non-iterative approach. Section~\ref{sec:rearrangement} defines the rearrangement coefficients, establishes their key structural properties, constructs the linear functionals used to compute them, and defines the $D$-basis. Section~\ref{sec:noniterativestraighten} synthesizes these tools to prove the non-iterative straightening formula in Theorem~\ref{thm:manResult1}. Finally, Section~\ref{sec:ortho} introduces the inner product on the skew Schur module and proves that the D-basis is the orthonormal basis obtained via the Gram-Schmidt process, yielding Theorem~\ref{thm:B}, and establishing the geometric interpretation of our straightening algorithm as an orthogonal projection.

\section{Acknowledgements}
We thank Darij Grinberg for posing the question that motivated this project and Jeremy Martin for comments and suggestions that improved the exposition. The first author thanks Andreas Karrenbauer for suggesting that the partition-case construction of the D-basis resembles Gram--Schmidt orthogonalization, which motivated our orthogonality-based interpretation of the D-basis. This project benefited from computations performed with SageMath.

\section{Preliminaries} \label{sec:prelim}
In this section, we give a determinantal, linear-algebraic construction of the skew Schur module that avoids the heavier algebraic machinery found in \cite{03WJ} and parallels the construction of Schur modules in \cite{97FW}. A key ingredient of our approach is an explicit module homomorphism from the skew Schur module into a polynomial ring, which underpins both the proof of the non-iterative straightening algorithm and the analysis of its coefficients.

\subsection{Tensor and exterior powers}
Let $R$ be a commutative ring with $1$ and let $E$ be a finite free $R$-module. For $r\ge 1$, the $r$-fold tensor power is $E^{\otimes r}=E\otimes_R\cdots\otimes_R E$ (with $r$ factors).  Let $R_0=\mathbb{Z}\cdot 1_R\subset R$ denote the prime subring. For $r\ge 0$, the $r$-th exterior power $\bigwedge^r E$ is the quotient of $E^{\otimes r}$ by the $R$-submodule generated by elementary tensors $e_1\otimes\cdots\otimes e_r$ with $e_i=e_j$ for some $i\ne j$, and we write $e_1\wedge\cdots\wedge e_r$ for the image of $e_1\otimes\cdots\otimes e_r$. This construction is $R$-multilinear and alternating in its arguments, so $v\wedge v=0$.

\subsection{Skew partitions, fillings, and SSYT}

The combinatorial objects in this section provide a framework for constructing bases of skew Schur modules.

A \emph{partition} is a finite sequence of positive integers $\lambda=(\lambda_1,\ldots,\lambda_k)$ with $\lambda_1\ge\cdots\ge\lambda_k>0$. Its \emph{length} is $\ell(\lambda)=k$ and its \emph{size} is $|\lambda|=\sum_{i=1}^k \lambda_i$. Identify $\lambda$ with its \emph{Young diagram}, a collection of left-justified boxes with $ \lambda_i $ boxes in row $ i $; write $(r,c)$ for the box in row $r$ and column $c$. For a partition $ \lambda $, the \emph{conjugate partition} $ \lambda' = (\lambda'_1, \ldots, \lambda'_{\lambda_1}) $ is defined by letting $ \lambda'_j $ be the number of boxes in column $ j $ of the Young diagram of $ \lambda $. Visually, the Young diagram of $\lambda'$ is the reflection of the Young diagram of $\lambda$ across its main diagonal.

Let $\lambda$ and $\mu$ be partitions with $\mu_i\le \lambda_i$ for all $i$. Then $\lambda/\mu$ is a \emph{skew partition} with its \emph{skew Young diagram} equal to the set of boxes
\[
D(\lambda/\mu) = \{(r,c)\in\mathbb{Z}_{>0}^2 \mid 1\le r\le \ell(\lambda),\ \mu_r< c \le \lambda_r\},
\]
and size $|\lambda/\mu|=|\lambda|-|\mu|$.

We adopt the standard convention of using the same notation $\lambda/\mu$ for the skew partition and its skew Young diagram. For $c\in[\lambda_1]$, the $c$-th column is $\mathrm{Col}_c(\lambda/\mu)=\{(r,c)\in\lambda/\mu\}$ with height $\ell'_c=|\mathrm{Col}_c(\lambda/\mu)|$. Then $\ell'_c=\lambda'_c-\mu'_c$.

Fix $m\in\mathbb{Z}_{>0}$ and write $[m]=\{1,\ldots,m\}$. A \emph{filling} of shape $\lambda/\mu$ is a function $F:\lambda/\mu\to [m]$, and $F[r,c]$ denotes the entry in cell $(r,c)$. The \emph{content} of $F$ is $z=(z_1,\ldots,z_m)\in\mathbb{Z}_{\ge 0}^m$, where $z_i$ counts the number of entries equal to $i$. A \emph{semistandard young tableaux (SSYT)} of shape $\lambda/\mu$ is a filling with entries in rows weakly increasing left to right and entries in columns strictly increasing top to bottom. Write $F(\lambda/\mu,z)$ for the set of fillings with content $z$ and $\text{SSYT}(\lambda/\mu,z)$ for the set of SSYT with content $z$.

\begin{example}
\ytableausetup{notabloids}
\ytableausetup{mathmode,boxsize=1.25em,centertableaux}
Let $\lambda=(5,4,3,2)$ and $\mu=(3,2)$. Then $\lambda'=(4,4,3,2,1)$ and
  \[
    \ytableausetup{mathmode,boxsize=1em,centertableaux}
    \lambda =
    \begin{ytableau}
      \; & \; & \; & \; & \; \\
      \; & \; & \; & \; \\
      \; & \; & \; \\
      \; & \;
    \end{ytableau}
    \qquad
    \mu =
    \begin{ytableau}
      \; & \; & \; \\
      \; & \;
    \end{ytableau}
    \qquad
    \lambda/\mu =
    \begin{ytableau}
      \none & \none & \none & \; & \; \\
      \none & \none & \; & \;  \\
      \; & \; & \; \\
      \; & \;
    \end{ytableau}
  \]
The size of the skew partition is $|\lambda/\mu| = 14-5=9$. For $\lambda=(3,2)$ and $\mu=(1)$, the skew shape $\lambda/\mu$ has cells $\{(1,2), (1,3), (2,1), (2,2)\}$ and
  \[
    \ytableausetup{mathmode,boxsize=1em,centertableaux}
    \begin{ytableau}
      \none & 1 & 2 \\
      3 & 2
  \end{ytableau}
  \qquad \text{ and }\qquad
  \begin{ytableau}
      \none & 1 & 2 \\
      2 & 3
  \end{ytableau},
  \]
are examples, respectively, of a filling and an SSYT of shape $\lambda/\mu$.
\end{example}

\subsection{Garnir action on fillings} \label{subsec:garnirdef}

To define the skew Schur module, we must first define the action of a certain subset of the symmetric group on fillings. These actions lead to the Garnir relations, which were originally introduced by Garnir in \cite{50GA} to provide a straightening algorithm for standard Young tableaux. We employ a version of this machinery adapted to skew shapes.

Let $a$ and $b$ be positive integers and $\mathfrak{S}_{a+b}$ be the symmetric group on $a+b$ elements. Throughout the paper, we will use one-line notation for permutations.
Let
\[
\mathfrak{S}_{a+b}^{a,b} = \{\sigma \in \mathfrak{S}_{a+b} \mid \sigma(1) < \sigma(2) < \dots < \sigma(a), \text{ and } \sigma(a+1) < \dots < \sigma(a+b)\}
\]
be the set of $(a,b)$-shuffles. This is the distinguished set of minimal length representatives for the left cosets of the Young subgroup $\mathfrak{S}_a \times \mathfrak{S}_b$ in the symmetric group $\mathfrak{S}_{a+b}$.

Now, fix a skew partition $\lambda/\mu$ and two distinct columns $c_1, c_2 \in [\lambda_1]$ such that $c_1 < c_2$. Choose positive integers $a \leq \ell'_{c_1}$ and $b \leq \ell'_{c_2}$. Given a filling $F$ of shape $\lambda/\mu$, we define two sets of coordinates: $\Gar^{a,\_}_{c_1,\_}$, corresponding to the bottom $a$ cells of column $c_1$, and $\Gar^{\_,b}_{\_, c_2}$, corresponding to the top $b$ cells of column $c_2$. Formally,
\[
\Gar^{a,\_}_{c_1,\_} = \{(\lambda'_{c_1}-a+1, c_1), \dots, (\lambda'_{c_1}, c_1)\}, \quad
\Gar^{\_,b}_{\_, c_2} = \{(\mu'_{c_2}+1, c_2), \dots, (\mu'_{c_2}+b, c_2)\}.
\]
Their union
\[
\Gar_{c_1,c_2}^{a,b} = \Gar^{a,\_}_{c_1,\_} \cup \Gar^{\_,b}_{\_, c_2}
\]
is the set of coordinates whose entries in $F$ will be permuted by $(a,b)$-shuffles. Define the bijection
\[
\eta : \Gar_{c_1,c_2}^{a,b} \to \{1, 2, \dots, a+b\}
\]
that enumerates these coordinates by first listing the cells in the $c_1$ segment from top to bottom, followed by the cells in the $c_2$ segment from top to bottom. Specifically, $\eta$ maps
\[
% (\lambda'_{c_1}-a+1,c_1) \mapsto 1, \quad (\lambda'_{c_1}-a+2,c_1) \mapsto 2, \quad \dots, \quad (\mu'_{c_2}+b,c_2) \mapsto a+b.
(\lambda'_{c_1}-a+1,c_1)\mapsto 1,
\dots,
(\lambda'_{c_1},c_1)\mapsto a,
(\mu'_{c_2}+1,c_2)\mapsto a+1,
\dots,
(\mu'_{c_2}+b,c_2)\mapsto a+b.
\]

\begin{definition} \label{def:permutation_filling}
For each permutation $\pi \in \mathfrak{S}_{a+b}^{a,b}$, we define a new filling $\pi(F^{a, b}_{c_1, c_2})$ obtained from $F$ by keeping the entries for coordinates outside $\Gar_{c_1,c_2}^{a,b}$ fixed and rearranging the entries for coordinates inside according to $\pi$. The action is given by the formula
\[
\pi\!\left(F^{a,b}_{c_1,c_2}\right)[x,y]=
\begin{cases}
F\!\left[\eta^{-1}\!\big(\pi(\eta(x,y))\big)\right], & (x,y)\in \Gar_{c_1,c_2}^{a,b},\\[4pt]
F[x,y], & (x,y)\notin \Gar_{c_1,c_2}^{a,b}.
\end{cases}
\]
\end{definition}
For notational simplicity, we let $F_{\pi}$ denote the new filling $\pi(F^{a, b}_{c_1, c_2})$. The parameters $a, b, c_1, c_2$ will always be clear from the surrounding context.

\begin{example}
Let $\lambda = (3,2)$ and $\mu = (1)$. Consider the filling $F$ of the skew shape $\lambda/\mu$ given by
\[
\ytableausetup{mathmode,boxsize=1em,centertableaux}
F = \begin{ytableau}
\none & 2 & 1 \\
3 & 1
\end{ytableau}.
\]
The conjugate partition is $\lambda' = (2,2,1)$. For column indices, we choose $c_1 = 1$ and $c_2 = 2$. The corresponding column heights in the skew shape are
\[
\ell'_1 = \lambda'_{1} - \mu'_{1} = 2 - 1 = 1, \quad \text{and} \quad \ell'_2 = \lambda'_{2} - \mu'_{2} = 2 - 0 = 2.
\]
Let us consider the case where we take the maximal number of cells from each column, so we set $a = \ell'_1 = 1$ and $b = \ell'_2 = 2$. The set of $(1,2)$-shuffles, where we represent the permutations by their one-line notation, is
\[
\mathfrak{S}_{1+2}^{1,2} = \{\sigma \in \mathfrak{S}_3 \mid \sigma(2) < \sigma(3)\} = \{123, 213, 312\}.
\]
Let $\pi = 213 \in \mathfrak{S}_{1+2}^{1,2}$. We now construct the new filling $F_{\pi}$. The relevant coordinate sets are
\[
\Gar^{1,\_}_{1,\_} = \{(2,1)\}, \quad \Gar^{\_,2}_{\_,2} = \{(1,2), (2,2)\}, \quad \text{and} \quad \Gar^{1,2}_{1,2} = \{(2,1), (1,2), (2,2)\}.
\]
The bijection $\eta : \Gar^{1,2}_{1,2} \to \{1,2,3\}$ maps $(2,1) \mapsto 1$, $(1,2) \mapsto 2$, and $(2,2) \mapsto 3$. The new entries for $F_{\pi}$ are calculated by applying the permutation $\pi$ to the values in $F$ at these locations yielding
\begin{align*}
F_{\pi}[2,1] &= F\left[\eta^{-1}(\pi(\eta(2,1)))\right] = F[\eta^{-1}(2)] = F[1,2] = 2, \\
F_{\pi}[1,2] &= F\left[\eta^{-1}(\pi(\eta(1,2)))\right] = F[\eta^{-1}(1)] = F[2,1] = 3, \\
F_{\pi}[2,2] &= F\left[\eta^{-1}(\pi(\eta(2,2)))\right] = F[\eta^{-1}(3)] = F[2,2] = 1.
\end{align*}
The entry at $(1,3)$ remains unchanged. The resulting filling is
\[
F_{\pi} =
\ytableausetup{notabloids}
\begin{ytableau}
\none & 3 & 1 \\
2 & 1
\end{ytableau}.
\]
\end{example}

\subsection{Universal Skew Schur Modules}

In this section, we develop a concrete construction for the skew Schur module $E^{\lambda/\mu}$, where $E$ is a finitely generated free $R$-module, and $R$ is a commutative ring. Our approach begins by defining the module abstractly through a universal property. This defines
$E^{\lambda/\mu}$ as the universal object for maps originating from $E^{n}$.
This abstract definition is then translated into a concrete algebraic structure.

\begin{definition}\label{def:E_times_lambda_mu}
Let $\lambda/\mu$ be a skew partition and define
\[
E^{\times \lambda/\mu} := \big\{ \mathbf{v} : D(\lambda/\mu) \to E \big\}.
\]
An element $\mathbf{v} \in E^{\times \lambda/\mu}$ assigns a vector $\mathbf{v}(r,c) \in E$ to each box $(r,c) \in D(\lambda/\mu)$.
\end{definition}

Equip $E^{\times \lambda/\mu}$ with the pointwise $R$-module structure: for $\mathbf{v},\mathbf{w}\in E^{\times \lambda/\mu}$ and $r\in R$,
\[
(\mathbf{v}+\mathbf{w})(x)=\mathbf{v}(x)+\mathbf{w}(x),\qquad (r\mathbf{v})(x)=r\,\mathbf{v}(x)\quad\text{for all }x\in D(\lambda/\mu).
\]
Fix a total order $\prec$ on $D(\lambda/\mu)$ and write $D(\lambda/\mu)=\{x_1,\ldots,x_n\}$ with $x_1\prec\cdots\prec x_n$, where $n=|\lambda/\mu|$. The map
\[
E^{\times \lambda/\mu}\longrightarrow E^n,\qquad \mathbf{v}\longmapsto \big(\mathbf{v}(x_1),\ldots,\mathbf{v}(x_n)\big),
\]
is an $R$-linear isomorphism with inverse sending $(v_1,\ldots,v_n)$ to the function $x_i\mapsto v_i$.

Fix $c_1, c_2 \in [\lambda_1]$ such that $c_1 < c_2$ and $a, b \in \mathbb{N}$ such that $a \leq \ell'_{c_1}$ and $b \leq \ell'_{c_2}$ with $\eta$ defined as above. To define Garnir action in this setting, let $\pi\in\mathfrak{S}_{a+b}^{a,b}$ and define $\pi\cdot\mathbf{v}\in E^{\times \lambda/\mu}$ by
\[
(\pi \cdot \mathbf{v})(x,y)=
\begin{cases}
\mathbf{v}\!\big(\eta^{-1}(\pi(\eta(x,y)))\big), & (x,y)\in \Gar_{c_1,c_2}^{a,b},\\[3pt]
\mathbf{v}(x,y), & (x,y)\notin \Gar_{c_1,c_2}^{a,b}.
\end{cases}
\]
We extend this action $R$-linearly in $\mathbf{v}$, so for all $r\in R$ and $\mathbf{v},\mathbf{w}\in E^{\times \lambda/\mu}$,
\[
\pi\cdot(\mathbf{v}+\mathbf{w})=\pi\cdot\mathbf{v}+\pi\cdot\mathbf{w},\qquad
\pi\cdot(r\mathbf{v})=r\,(\pi\cdot\mathbf{v}).
\]
Thus $E^{\times \lambda/\mu}$ is a left $R[\mathfrak{S}_{a+b}^{a,b}]$-module via $\mathbf{v}\mapsto\pi\cdot\mathbf{v}$.

To connect this abstract action to the combinatorial one on fillings, let us consider the special case where $E$ is a free $R$-module with a fixed, ordered basis $\mathcal{B} = \{e_1, \dots, e_m\}$. Any filling $F$ (with entries in $\{1, \dots, m\}$) corresponds to a special vector-filling, which we call a \emph{basis-filling} $\mathbf{v}_F \in E^{\times \lambda/\mu}$, defined by
\[
\mathbf{v}_F(r,c) := e_{F(r,c)} \quad \text{for all } (r,c) \in D(\lambda/\mu).
\]
The above action on basis-fillings is consistent with the action on fillings defined earlier, that is, $\pi \cdot \mathbf{v}_F = \mathbf{v}_{\pi(F^{a, b}_{c_1, c_2})}$ for all basis-fillings $\mathbf{v}_F \in E^{\times \lambda/\mu}$ and $\pi\in\mathfrak{S}_{a+b}^{a,b}$.

\begin{example}
Let $\lambda=(3,2)$ and $\mu=(1)$, so the set of boxes is
\[
D(\lambda/\mu) = \{(1,2), (1,3), (2,1), (2,2)\}.
\]
Let $E$ be a free $R$-module with basis $\{e_1,\dots, e_m\}$. A general element $\mathbf{v} \in E^{\times \lambda/\mu}$ is a vector-filling of the form
\[
{\ytableausetup{mathmode,boxsize=1.5em,centertableaux}
\mathbf{v} =
\ytableausetup{notabloids}
\begin{ytableau}
    \none & v_{1,2} & v_{1,3} \\
    v_{2,1} & v_{2,2}
\end{ytableau}}
\]
where $v_{r,c} \in E$. A filling $F$ and its corresponding basis-filling $\mathbf{v}_F$ are
\[
\ytableausetup{mathmode,boxsize=1.25em,centertableaux}
F =
\ytableausetup{notabloids}
\begin{ytableau}
    \none & 1 & 2 \\
    3 & 2
\end{ytableau}
\quad \text{and} \quad
\mathbf{v}_F =
\ytableausetup{notabloids}
\begin{ytableau}
    \none & e_1 & e_2 \\
    e_3 & e_2
\end{ytableau}.
\]
\end{example}

With this basis-independent action defined, we can now state the universal property of the skew Schur module.

\begin{definition}
A tuple $(a,b,c_1,c_2)$ is \emph{$\lambda/\mu$-admissible} if $a,b\in\mathbb{N}$ and $c_1,c_2\in[\lambda_1]$ with $c_1 < c_2$, and $a\le \ell'_{c_1}$, $b\le \ell'_{c_2}$, and $\lambda'_{c_1}-a<\mu'_{c_2}+b$.
\end{definition}

\begin{definition}[Universal Property of the Skew Schur Module]\label{def:universal-property-skew-schur-module}
Let $E$ be a module over a commutative ring $R$ and let $\lambda/\mu$ be a skew partition. The skew Schur module $E^{\lambda/\mu}$ is an $R$-module that is universal for maps $\varphi: E^{\times \lambda/\mu} \to M$ (for any $R$-module $M$) satisfying

\begin{enumerate}
    \item[\textbf{(1)}] \textbf{R-multilinearity:} The map $\varphi$ is $R$-multilinear in the entries of $\lambda/\mu$.

    \item[\textbf{(2)}] \textbf{Column-Alternating Property:} The map $\varphi$ is alternating in the entries of any given column of the diagram $\lambda/\mu$.

    \item[\textbf{(3)}] \textbf{The Garnir Relations:} For any element $\mathbf{v} \in E^{\times \lambda/\mu}$ and all $\lambda/\mu$-admissible $(a,b,c_1,c_2)$, the map $\varphi$ must satisfy
    \[
    \sum_{\pi\in\mathfrak{S}_{a+b}^{a,b}}\sign{\pi}\varphi(\pi \cdot \mathbf{v})=0.
    \]
\end{enumerate}
\end{definition}

\begin{remark}
Consider the slice category of $R$-modules over $E^{\times \lambda/\mu}$, as defined in~\cite[Ch. II, Sec. 6]{ML98}. Let $\mathcal{C}$ be its full subcategory on those arrows $\varphi:E^{\times \lambda/\mu}\to M$ that are $R$-multilinear, column-alternating, and satisfy the Garnir relations.
By the universal property of the construction, the pair $(E^{\lambda/\mu},\iota)$, where $\iota:E^{\times \lambda/\mu}\to E^{\lambda/\mu}$ is the universal map, is an initial object in $\mathcal{C}$~\cite[Ch. III, Sec. 1]{ML98}.
\end{remark}

%%%%%%%%%% Subsection: Lemma 1 %%%%%%%%%%
\begin{lemma}\label{lem:lemma1}
  Fix a skew partition $\lambda / \mu$. If $E$ is a free $R$-module with a basis \{$e_1, \ldots, e_m\}$, then the skew Schur module $E^{\lambda/\mu}$ is isomorphic to the quotient module $M_R / Q$, where $M_R$ is the free $R$-module with a basis consisting of formal symbols $$\{e_F \mid F \text{ is a filling of } \lambda/\mu \text{ with entries in } \{1, \ldots, m\}\},$$ and $Q$ is the submodule generated by three types of generators:
  \begin{enumerate}
    \item[(i)] $e_F$, where the filling $F$ has two identical entries in the same column, %This corresponds to the alternating property (2) in columns.
    \item[(ii)] $e_F + e_{F'}$, where the filling $F'$ is obtained from $F$ by interchanging two entries within the same column, %This also corresponds to the alternating property (2).
    \item[(iii)] $\sum_{\pi\in \mathfrak{S}_{a+b}^{a,b}} \sign{\pi} e_{F_{\pi}}$, where this sum is defined for any initial filling $F$ and any $\lambda/\mu$-admissible $(a,b,c_1,c_2)$.
  \end{enumerate}
  \end{lemma}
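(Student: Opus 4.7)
The strategy is to verify that $M_R/Q$, equipped with a natural map $\Phi: E^{\times \lambda/\mu}\to M_R/Q$, is an initial object among $R$-multilinear, column-alternating, Garnir-satisfying maps out of $E^{\times\lambda/\mu}$; by uniqueness of initial objects in the slice category of Definition~\ref{def:universal-property-skew-schur-module}, this forces $M_R/Q\cong E^{\lambda/\mu}$.

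First I would construct $\Phi$. A choice of ordering on $D(\lambda/\mu)$ identifies $E^{\times\lambda/\mu}$ with $E^n$, and since $E$ is free on $\{e_1,\dots,e_m\}$ the universal property of $R$-multilinear maps out of $E^n$ yields a unique $R$-multilinear map $E^n\to M_R/Q$ sending the basis tuple corresponding to a basis-filling $\mathbf{v}_F$ to $[e_F]$. Transporting back gives $\Phi$, which is $R$-multilinear by construction. Next, I would verify that $\Phi$ satisfies axioms (2) and (3). For column-alternation, reduce to a bilinear check in a single column: fix two boxes $(r_1,c),(r_2,c)$ in column $c$ and assign all other entries to be basis vectors, obtaining a bilinear map $B(v,w)$. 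For $v=\sum_i v_i e_i$, expanding
\[
B(v,v)=\sum_{i,j}v_iv_j\,[e_{F_{ij}}]
\]
(where $F_{ij}$ has $i$ at $(r_1,c)$ and $j$ at $(r_2,c)$) shows diagonal terms $i=j$ die via generators of type (i), while off-diagonal pairs $\{(i,j),(j,i)\}$ cancel via generators of type (ii). Multilinear extension in the remaining positions then promotes this vanishing to arbitrary $\mathbf{v}$ with coincident column entries. The Garnir axiom is immediate on basis-fillings since $\pi\cdot\mathbf{v}_F=\mathbf{v}_{F_\pi}$ (as noted after Definition~\ref{def:permutation_filling}), so
\[
\sum_{\pi\in\mathfrak{S}_{a+b}^{a,b}}\sign{\pi}\,\Phi(\pi\cdot\mathbf{v}_F)=\Big[\sum_{\pi\in\mathfrak{S}_{a+b}^{a,b}}\sign{\pi}\,e_{F_\pi}\Big]=0
\]
by generators of type (iii), and $R$-multilinearity extends this to all $\mathbf{v}$.

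For initiality, given any $\varphi: E^{\times\lambda/\mu}\to M$ satisfying (1)-(3), I would define $\tilde\varphi: M_R\to M$ on basis symbols by $e_F\mapsto\varphi(\mathbf{v}_F)$ and extend $R$-linearly. Each of the three types of generators of $Q$ is annihilated by $\tilde\varphi$: type (i) by axiom (2) applied to $\mathbf{v}_F$, type (ii) by the swap-antisymmetry consequence of (2), and type (iii) by axiom (3) applied to $\mathbf{v}_F$ together with $\pi\cdot\mathbf{v}_F=\mathbf{v}_{F_\pi}$. Hence $\tilde\varphi$ descends to a unique map $\bar\varphi: M_R/Q\to M$ with $\bar\varphi\circ\Phi=\varphi$; uniqueness is automatic because the classes $\{[e_F]\}$ generate $M_R/Q$.

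The main obstacle is precisely the column-alternating check for arbitrary $\mathbf{v}\in E^{\times\lambda/\mu}$: the combinatorial generators (i) and (ii) only address basis-fillings, while axiom (2) demands vanishing on every $\mathbf{v}$ with two equal column entries. The symmetric-pairing argument above is the only place where one must carefully interleave the multilinear extension with the generating relations; every other step is a routine application of the universal-property template.
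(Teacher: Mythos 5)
Your proposal is correct, but it takes a genuinely different route from the paper. You verify directly that $M_R/Q$, equipped with the multilinear map $\Phi$ sending $\mathbf{v}_F\mapsto [e_F]$, satisfies axioms (1)--(3) and is initial among such maps, so the isomorphism with $E^{\lambda/\mu}$ follows from uniqueness of universal objects; the paper instead builds a chain of universal objects $U_1=E^{\otimes n}\twoheadrightarrow U_2\cong\bigotimes_{c}\bigwedge^{\ell'_c}E\twoheadrightarrow E^{\lambda/\mu}$, identifies $U_1\cong M_R$ and $U_2\cong M_R/Q_{\mathrm{alt}}$, shows $\ker(p_2)$ is generated by the Garnir elements evaluated at basis-fillings (via multilinearity of the maps $\delta^{a,b}_{c_1,c_2}$ and freeness of $E$), and concludes with the First and Third Isomorphism Theorems. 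The two arguments shift the work to different places: your route avoids the kernel identifications entirely (the ``$Q$ is large enough'' direction becomes automatic once initiality is checked) and has the added virtue of establishing existence of the universal object rather than presupposing it, at the cost of the careful polarization argument needed to verify the column-alternating axiom of $\Phi$ on arbitrary vector entries, and the analogous multilinear extension of the Garnir check from basis-fillings to all of $E^{\times\lambda/\mu}$ --- both of which you handle correctly (the diagonal terms die by generators of type (i), the off-diagonal pairs cancel by type (ii) using commutativity of $R$, and a multilinear map vanishing on basis tuples vanishes identically). What the paper's stepwise approach buys, and yours does not directly produce, is the intermediate identification $U_2\cong\bigotimes_{c}\bigwedge^{\ell'_c}E$, which is exactly what is reused in Corollary~\ref{cor:explicit_construction} to present $E^{\lambda/\mu}$ as a quotient of a tensor product of exterior powers; if you wanted that corollary from your argument you would need to add the (standard) identification of $M_R/Q_{\mathrm{alt}}$ with the column-wise exterior product separately.
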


  \begin{proof}
  The goal is to prove that $E^{\lambda/\mu}$, defined abstractly by the universal property in Definition~\ref{def:universal-property-skew-schur-module}, is isomorphic to the concrete module $M_R/Q$. We prove this in three steps, showing that both modules arise from the same sequence of universal constructions. Let $n=|\lambda/\mu|$ and fix the total order on $D(\lambda/\mu)$ used above to identify $E^{\times \lambda/\mu}\cong E^{n}$.

  Let $U_1$ be the universal $R$-module satisfying property (1) (multilinearity) for maps from $E^n$. By definition, $U_1:=E^{\otimes n}$ with universal multilinear map $i_1:E^n\to U_1$.

  Let $M_R$ be the free $R$-module on the set of fillings $F$ of $\lambda/\mu$ with entries in $\{1,\ldots,m\}$. Via the fixed order on $D(\lambda/\mu)$ and the basis $\{e_1,\ldots,e_m\}$ of $E$, the assignment $e_F\longmapsto e_{j_1}\otimes\cdots\otimes e_{j_n}$ (where $(j_1,\ldots,j_n)$ are the entries of $F$ in the fixed total order on $D(\lambda/\mu)$) extends to an $R$-linear isomorphism
  \[
  M_R \xrightarrow{\;\cong\;} U_1.
  \]
  We therefore use $M_R$ as a concrete model for $U_1$ in the subsequent steps.

Let $U_2$ be the universal object satisfying properties (1) and (2), with universal map $i_2:E^n\to U_2$. Since $i_2$ is multilinear, the universal property of $U_1$ yields a unique $R$-linear map $p_1:U_1\to U_2$ such that $i_2=p_1\circ i_1$:
\[
\begin{tikzcd}[column sep=large, row sep=large]
E^n \arrow[r,"i_2"] \arrow[d,"i_1"'] & U_2 \\
U_1 \arrow[ur,dashed,"\exists!\,p_1"'] &
\end{tikzcd}
\]
Observe that $\operatorname{span}(\operatorname{Im}(i_1)) = U_1$ by multilinearity and basis-generation, and by $R$-linearity $p_1(\operatorname{span}(\operatorname{Im}(i_1))) = \operatorname{span}(p_1(\operatorname{Im}(i_1))) = \operatorname{span}(\operatorname{Im}(i_2))$. If $\operatorname{span}(\operatorname{Im}(i_2))$ were a proper submodule of $U_2$, then $i_2$ would factor through this submodule and itself satisfy the same universal property, contradicting the definition of $U_2$. Thus $\operatorname{span}(\operatorname{Im}(i_2)) = U_2$. Combining we have $\operatorname{Im}(p_1) = p_1(U_1) = p_1(\operatorname{span}(\operatorname{Im}(i_1))) = \operatorname{span}(p_1(\operatorname{Im}(i_1))) = \operatorname{span}(\operatorname{Im}(i_2)) = U_2$, that is, $p_1$ is surjective. By the First Isomorphism Theorem,
\[
U_2 \cong U_1/\ker(p_1).
\]
An element $x\in U_1$ lies in $\ker(p_1)$ if and only if it is sent to zero by every multilinear, column-alternating map; thus $\ker(p_1)$ is precisely the submodule generated by the column-alternating relations. In the concrete model, this identifies $U_2$ with $M_R/Q_{\mathrm{alt}}$, where $Q_{\mathrm{alt}}$ is generated by the relations of type (i) and (ii) in Lemma~\ref{lem:lemma1}. Equivalently, $U_2 \cong \bigotimes_{j=1}^{\lambda_1}\!\bigwedge^{\ell'_j}E$ via column-wise exteriorization.

  The skew Schur module $E^{\lambda/\mu}$ is the universal object for maps satisfying all three properties: multilinearity, column-alternating, and the Garnir relations. Let $i_3: E^{n} \to E^{\lambda/\mu}$ be the universal map. By the universal property of $U_2$, there exists a unique $R$-linear map $p_2: U_2 \to E^{\lambda/\mu}$ such that $i_3 = p_2 \circ i_2$. Thus
    \[
    \begin{tikzcd}[column sep=large, row sep=large]
    E^n \arrow[r, "i_3"] \arrow[d, "i_2"'] & E^{\lambda/\mu} \\
    U_2 \arrow[ur, dashed, "\exists!\ p_2"'] &
    \end{tikzcd}.
    \]
    By the same span-and-surjectivity argument as in Step~2, the map $p_2$ is surjective. Hence, by the First Isomorphism Theorem, $E^{\lambda/\mu}\cong U_2/\ker(p_2)$.

An element $y\in U_2$ belongs to $\ker(p_2)$ if and only if $\psi(y)=0$ for every $R$-module $M$ and every $R$-linear map $\psi:U_2\to M$ such that $\varphi=\psi\circ i_2:E^n\to M$ satisfies the Garnir relations for all $\lambda/\mu$-admissible tuples $(a,b,c_1,c_2)$. Thus, for every $\mathbf{v}\in E^n$ and every $\lambda/\mu$-admissible $(a,b,c_1,c_2)$,
\begin{align*}
\sum_{\pi\in \mathfrak{S}_{a+b}^{a,b}}\sign{\pi}\,\varphi(\pi\cdot \mathbf{v})
\;=\;
\psi\!\left(\sum_{\pi\in \mathfrak{S}_{a+b}^{a,b}}\sign{\pi}\, i_2(\pi\cdot \mathbf{v})\right)
\;=\;0.
\end{align*}
Since this holds for every such $\psi$, $\ker(p_2)$ is the submodule of $U_2$ generated by the elements $\sum_{\pi\in \mathfrak{S}_{a+b}^{a,b}}\sign{\pi}\, i_2(\pi\!\cdot\! \mathbf{v})$ as $\mathbf{v}$ ranges over $E^n$ and $(a,\!b,\!c_1,\!c_2)$ ranges over $\lambda/\mu$-admissible tuples.

To simplify this set of generators, for each $\lambda/\mu$-admissible tuple $(a,b,c_1,c_2)$ let
$\delta^{a,b}_{c_1,c_2}: E^n \to U_2$ be the map defined by
$\delta^{a,b}_{c_1,c_2}(\mathbf{v}) = \sum_{\pi} \sign{\pi}\, i_2(\pi \cdot \mathbf{v})$.
The kernel of $p_2$ is the submodule spanned by the images of all such maps $\delta^{a,b}_{c_1,c_2}$.
Each $\delta^{a,b}_{c_1,c_2}$ is $R$-multilinear because $i_2$ is multilinear and the permutation action $\mathbf{v}\mapsto \pi\cdot\mathbf{v}$ is $R$-linear in each component.
Since $E$ is a free module, $E^n$ has a product basis consisting of standard basis $n$-tuples; by $R$-multilinearity, the image of each $\delta^{a,b}_{c_1,c_2}$ is spanned by its values on those basis $n$-tuples~\cite[Corollary~10.16]{03DF}.
Via the fixed order on $D(\lambda/\mu)$, these basis $n$-tuples correspond to the basis-fillings $\mathbf{v}_F$.
Accordingly, $\ker(p_2)$ is generated by the set $\{\delta^{a,b}_{c_1,c_2}(\mathbf{v}_F) \mid \mathbf{v}_F \text{ is a basis-filling and } (a,b,c_1,c_2) \text{ is } \lambda/\mu\text{-admissible}\}$.

    Let $Q_{\mathrm{Garnir}} \subset M_R$ be the submodule generated by \textit{(iii)}. Hence, it is generated by elements of the form $\sum_{\pi} \sign{\pi} e_{F_{\pi}}$. Under the isomorphism $U_2 \cong M_R/Q_{\mathrm{alt}}$, the generators $\delta^{a,b}_{c_1,c_2}(\mathbf{v}_F)=\sum_{\pi}\sign{\pi}\, i_2(\mathbf{v}_{F_\pi})$ of $\ker(p_2)$, over all $\lambda/\mu$-admissible $(a,b,c_1,c_2)$, correspond precisely to the cosets of the generators of $Q_{\mathrm{Garnir}}$. Therefore, the submodule $\ker(p_2)$ corresponds to the submodule $(Q_{alt} + Q_{\mathrm{Garnir}}) / Q_{alt}$.

    Finally, by the Third Isomorphism Theorem for modules, we have
    \[
    E^{\lambda/\mu} \cong U_2 / \ker(p_2) \cong (M_R / Q_{alt}) / \left( (Q_{alt} + Q_{\mathrm{Garnir}}) / Q_{alt} \right) \cong M_R / (Q_{alt} + Q_{\mathrm{Garnir}}).
    \]
    The submodule $Q_{alt} + Q_{\mathrm{Garnir}}$ is, by definition, equal to $Q$. We conclude that $E^{\lambda/\mu}\!\cong\!M_R / Q$.
  \end{proof}

  \begin{corollary}\label{cor:explicit_construction}
Fix a skew partition $\lambda/\mu$. The skew Schur module $E^{\lambda/\mu}$ is isomorphic to the quotient module:
\[
E^{\lambda/\mu} \cong \frac{\bigotimes_{j=1}^{\lambda_1} \bigwedge^{\ell'_j}(E)}{Q(\lambda/\mu, E)},
\]
where $Q(\lambda/\mu, E)$ is the submodule generated by the Garnir relations.
\end{corollary}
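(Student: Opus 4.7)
The plan is to read this corollary as a repackaging of Lemma~\ref{lem:lemma1}, with only one further ingredient to articulate: the column-wise exteriorization isomorphism that was already invoked in passing in the proof of Step~2 of that lemma. I would organize the argument in three steps.

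First, I would formalize the isomorphism $M_R/Q_{\mathrm{alt}} \xrightarrow{\;\cong\;} \bigotimes_{j=1}^{\lambda_1}\bigwedge^{\ell'_j}(E)$, where $Q_{\mathrm{alt}}\subseteq Q$ is the submodule generated by the relations of types (i) and (ii) in Lemma~\ref{lem:lemma1}. Concretely, I would send the class of a basis-filling $e_F$ to the elementary tensor
\[
\bigotimes_{j=1}^{\lambda_1}\bigl(e_{F[r_1(j),j]}\wedge\cdots\wedge e_{F[r_{\ell'_j}(j),j]}\bigr),
\]
where $r_1(j)<\cdots<r_{\ell'_j}(j)$ enumerate the rows of $\mathrm{Col}_j(\lambda/\mu)$, and extend $R$-linearly. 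Relations (i) and (ii) translate exactly into the defining relations $v\wedge v=0$ and $u\wedge v=-v\wedge u$ in each column, so the map is well defined, and an inverse is built by $R$-multilinearity of each $\bigwedge^{\ell'_j}E$ together with the universal property of the tensor product over columns. This gives a canonical identification of the universal multilinear column-alternating target $U_2$ with $\bigotimes_{j=1}^{\lambda_1}\bigwedge^{\ell'_j}(E)$.

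Second, I would define $Q(\lambda/\mu,E)$ as the image of $Q_{\mathrm{Garnir}}$ (the submodule generated by relations (iii)) under the isomorphism above. Explicitly, this is the submodule of $\bigotimes_{j=1}^{\lambda_1}\bigwedge^{\ell'_j}(E)$ generated, as $\mathbf{v}$ ranges over $E^{\times\lambda/\mu}$ and $(a,b,c_1,c_2)$ over $\lambda/\mu$-admissible tuples, by the images of $\sum_{\pi\in\mathfrak{S}_{a+b}^{a,b}}\sign{\pi}\,\mathbf{v}_{F_\pi}$ under column-wise exteriorization; it suffices to check this on basis-fillings by $R$-multilinearity, exactly as in the argument after the definition of $\delta^{a,b}_{c_1,c_2}$ in the proof of Lemma~\ref{lem:lemma1}.

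Finally, the conclusion follows from the Third Isomorphism Theorem:
\[
E^{\lambda/\mu}\;\cong\;M_R/Q\;=\;M_R/(Q_{\mathrm{alt}}+Q_{\mathrm{Garnir}})\;\cong\;\frac{M_R/Q_{\mathrm{alt}}}{(Q_{\mathrm{alt}}+Q_{\mathrm{Garnir}})/Q_{\mathrm{alt}}}\;\cong\;\frac{\bigotimes_{j=1}^{\lambda_1}\bigwedge^{\ell'_j}(E)}{Q(\lambda/\mu,E)},
\]
where the first isomorphism is Lemma~\ref{lem:lemma1} and the last uses both the column-wise exteriorization isomorphism and the definition of $Q(\lambda/\mu,E)$. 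There is no substantial obstacle: the only step requiring any genuine verification is the well-definedness and invertibility of the column-wise exteriorization map, and this reduces to matching universal properties column by column. Everything else is bookkeeping built directly on top of Lemma~\ref{lem:lemma1}.
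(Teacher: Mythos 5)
Your proposal is correct and follows essentially the same route as the paper: the paper's proof of this corollary simply cites Steps~2 and~3 of the proof of Lemma~\ref{lem:lemma1}, which identify the universal multilinear column-alternating object with $\bigotimes_{j=1}^{\lambda_1}\bigwedge^{\ell'_j}(E)$ via column-wise exteriorization and then quotient by the Garnir relations using the Third Isomorphism Theorem. Your write-up merely makes explicit the exteriorization isomorphism and the identification of $Q(\lambda/\mu,E)$ that the paper leaves as brief remarks inside that proof.
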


\begin{proof}
This follows directly from the argument presented in steps 2 and 3 of the proof of Lemma~\ref{lem:lemma1}. Step 2 identifies the universal object for properties (1) and (2) with the tensor product of exterior powers, $\bigotimes_{j=1}^{\lambda_1} \bigwedge^{\ell'_j}(E)$. Step 3 then shows that the skew Schur module is the quotient of this very object by the submodule generated by the Garnir relations.
\end{proof}

\subsection{A Determinantal identity} Throughout this section, fix a skew parition $\lambda/\mu$, and  fix an ordered basis $(e_1,\ldots,e_m)$ of the free $R$-module $E$, so all filling entries lie in $[m]=\{1,\ldots,m\}$. Let $Z_{i,j}$ for $1 \leq i \leq \lambda'_1$ and $1 \leq j \leq m$ be a set of indeterminates. Our next objective is to construct a concrete basis for the quotient module $M_R/Q$ by exhibiting an $R$-module homomorphism $E^{\lambda/\mu}\to R[Z_{i,j}]$. The present subsection develops the technical matrix identities needed to define this map and verify that it respects the relations (i)-(iii) of Lemma~\ref{lem:lemma1}. This mirrors the partition case treated by Fulton~\cite{97FW}, where an analogous map is validated using a classical determinantal identity of Sylvester. In the skew setting, the required compatibility does not follow from Sylvester's identity, instead we establish a determinantal identity tailored to our construction.

Throughout this paper, we frequently work with matrices whose subscripts and superscripts involve multiple parameters. To minimize notational clutter when referring to their entries, we adopt the following convention: for a matrix $A$ and indices $i, j$, the entry in the $i$-th row and $j$-th column is denoted by $A[i,j]$ rather than the conventional $A_{i,j}$.
Fix a skew partition $\lambda / \mu$. We define a family of sequences corresponding to the row indices found in fixed columns of $\lambda / \mu$, namely for $c \in [\lambda_1]$ let
\begin{align*}
\row_c & :=  (\row_{c,1},\ldots,\row_{c,\ell'_c}) := (\mu'_c + 1, \ldots, \lambda'_c).
\end{align*}
Thus, $\row_c$ is the sequence of row indices of all boxes in column $c$, and $\row_{c,i}$ is the row index of the $i$th box (from the top) in column $c$ of $\lambda / \mu$.

Let $F$ be a fixed filling of shape $\lambda / \mu$.
For each $c\in [\lambda_1]$, we define a $ \ell'_c \times \ell'_c $ matrix $ M_{F, c} $ with
  \begin{equation}\label{def:M_F_c}
  M_{F, c}[i,j] = Z_{\row_{c,i},F[\row_{c,j}, c]}
  \end{equation}
for $1 \leq i,j \leq \ell'_c$. Thus, the (i,j) entry of $M_{F, c}$ is exactly the indeterminate indexed by the row of the $i$th box (from the top) in column $c$ of $\lambda / \mu$ and the value of $F$ in the row of the $j$th box (from the top) in column $c$.

\begin{example}
\label{ex:fillingColtoMat}
Let $F$ be a filling of the shape $(3,3,3,3,3,2,2,1)/(2, 1,1)$,
\[
F = \begin{ytableau}
\none & \none & 8 \\
\none & 10 & 5 \\
\none & 11 & 6 \\
14    & 2   & 7 \\
18    & 1   & 9 \\
16    & 3   & \none \\
17    & 4   & \none \\
15    & \none & \none
\end{ytableau}.
\]
Let $c_1 = 2$ and $c_2 = 3$. Then $\ell'_{c_1} = 6, \ell'_{c_2} = 5$, and
\[
 M_{F, 2} = \begin{bmatrix}
  Z_{2, 10} & Z_{2, 11} & Z_{2, 2} & Z_{2, 1} & Z_{2, 3} & Z_{2, 4} \\
  Z_{3, 10} & Z_{3, 11} & Z_{3, 2} & Z_{3, 1} & Z_{3, 3} & Z_{3, 4} \\
  Z_{4, 10} & Z_{4, 11} & Z_{4, 2} & Z_{4, 1} & Z_{4, 3} & Z_{4, 4} \\
  Z_{5, 10} & Z_{5, 11} & Z_{5, 2} & Z_{5, 1} & Z_{5, 3} & Z_{5, 4} \\
  Z_{6, 10} & Z_{6, 11} & Z_{6, 2} & Z_{6, 1} & Z_{6, 3} & Z_{6, 4} \\
  Z_{7, 10} & Z_{7, 11} & Z_{7, 2} & Z_{7, 1} & Z_{7, 3} & Z_{7, 4} \\
 \end{bmatrix},
\quad
 M_{F, 3} = \begin{bmatrix}
  Z_{1, 8} & Z_{1, 5} & Z_{1, 6} & Z_{1, 7} & Z_{1, 9} \\
  Z_{2, 8} & Z_{2, 5} & Z_{2, 6} & Z_{2, 7} & Z_{2, 9} \\
  Z_{3, 8} & Z_{3, 5} & Z_{3, 6} & Z_{3, 7} & Z_{3, 9} \\
  Z_{4, 8} & Z_{4, 5} & Z_{4, 6} & Z_{4, 7} & Z_{4, 9} \\
  Z_{5, 8} & Z_{5, 5} & Z_{5, 6} & Z_{5, 7} & Z_{5, 9} \\
 \end{bmatrix}.
\]
\end{example}

Finally, we define $D_{F, c}$ to be the determinant of $M_{F, c}$.
For a filling $F$ with shape $\lambda / \mu$, we let $D_F$ be the product of the determinants corresponding to the columns of $F$, namely
  \begin{equation}\label{def:D_F}
  D_F=\prod_{c=1}^{\lambda_1} D_{F, c}.
  \end{equation}

To model a specific Garnir relation involving columns $c_1, c_2$ and parameters $a, b$, we construct a composite matrix. We define two off-diagonal block matrices, $\overline{M}_{F,c_1}^{a}$ and $\overline{M}_{F,c_2}^{b}$, whose entries depend on the Garnir sets $\Gar^{a,\_}_{c_1,\_}$ and $\Gar^{\_,b}_{\_,c_2}$.

The matrix $\overline{M}_{F,c_1}^{a}$ is an $\ell'_{c_2} \times \ell'_{c_1}$ matrix defined by
\[
\overline{M}_{F,c_1}^{a}[i,j] := \begin{cases}
  Z_{\row_{c_2,i},F[\row_{c_1,j}, c_1]} & \text{if } (\row_{c_1,j}, c_1) \in \Gar^{a,\_}_{c_1,\_} \\
  0 & \text{otherwise}
\end{cases}.
\]
The matrix $\overline{M}_{F,c_2}^{b}$ is an $\ell'_{c_1} \times \ell'_{c_2}$ matrix defined by
\[
\overline{M}_{F,c_2}^{b}[i,j] := \begin{cases}
  Z_{\row_{c_1,i},F[\row_{c_2,j}, c_2]} & \text{if } (\row_{c_2,j}, c_2) \in \Gar^{\_,b}_{\_,c_2} \\
  0 & \text{otherwise}
\end{cases}.
\]

The full matrix $K_{F}^{c_1, c_2, a, b}$ is the $(\ell'_{c_1}+\ell'_{c_2}) \times (\ell'_{c_1}+\ell'_{c_2})$ block matrix given by
  \[
  K_{F}^{c_1, c_2, a, b} := \begin{bmatrix}
    M_{F, c_1} & \overline{M}_{F, c_2}^{b} \\
    \overline{M}_{F, c_1}^{a} & M_{F, c_2}
    \end{bmatrix}.
  \]

\begin{example} We continue with the filling $F$ from Example~\ref{ex:fillingColtoMat}. Let $c_1 = 2$, $c_2 = 3$, $a=4$, and $b=5$.
The Garnir set for column $c_1=2$ involves its bottom $a=4$ cells, which are those in rows $\{4,5,6,7\}$. The Garnir set for column $c_2=3$ involves its top $b=5$ cells, which are those in rows $\{1,2,3,4,5\}$.
This gives the off-diagonal blocks
  \[
  \overline{M}_{F, 2}^{4} = \begin{bmatrix}
    0 & 0 & Z_{1, 2} & Z_{1, 1} & Z_{1, 3} & Z_{1, 4} \\
    0 & 0 & Z_{2, 2} & Z_{2, 1} & Z_{2, 3} & Z_{2, 4} \\
    0 & 0 & Z_{3, 2} & Z_{3, 1} & Z_{3, 3} & Z_{3, 4} \\
    0 & 0 & Z_{4, 2} & Z_{4, 1} & Z_{4, 3} & Z_{4, 4} \\
    0 & 0 & Z_{5, 2} & Z_{5, 1} & Z_{5, 3} & Z_{5, 4} \\
   \end{bmatrix},
  \quad
  \overline{M}_{F, 3}^{5} = \begin{bmatrix}
    Z_{2, 8} & Z_{2, 5} & Z_{2, 6} & Z_{2, 7} & Z_{2, 9} \\
    Z_{3, 8} & Z_{3, 5} & Z_{3, 6} & Z_{3, 7} & Z_{3, 9} \\
    Z_{4, 8} & Z_{4, 5} & Z_{4, 6} & Z_{4, 7} & Z_{4, 9} \\
    Z_{5, 8} & Z_{5, 5} & Z_{5, 6} & Z_{5, 7} & Z_{5, 9} \\
    Z_{6, 8} & Z_{6, 5} & Z_{6, 6} & Z_{6, 7} & Z_{6, 9} \\
    Z_{7, 8} & Z_{7, 5} & Z_{7, 6} & Z_{7, 7} & Z_{7, 9} \\
   \end{bmatrix}.
  \]
  Then $K_{F}^{2, 3, 4, 5}$ is the $11 \times 11$ matrix constructed by assembling $M_{F,2}$, $M_{F,3}$, $\overline{M}_{F,3}^{5}$, and $\overline{M}_{F,2}^{4}$ into a block matrix as defined above.

\end{example}

The next few pages assemble a chain of technical lemmas whose purpose is to relate our determinantal data $D_{F,c}$ (and $D_F=\prod_c D_{F,c}$) to the Garnir action on fillings. We analyze the block matrix $K_F^{c_1,c_2,a,b}$ and its zeroes to show that, under the Garnir overlap condition, a signed sum of the two-column determinants $D_{F_\pi,c_1}D_{F_\pi,c_2}$ vanishes, and consequently we are able to show in Corollary~\ref{cor:garnir_determinant_relation} that the global signed sum $\sum_{\pi \in \mathfrak{S}_{a+b}^{a,b}} \sign{\pi} \,D_{F_\pi}$ also vanishes. This establishes a precise determinantal avatar of the Garnir relations. The resulting identity is the key input for the next subsection, where it enables the construction of an $R$-module homomorphism from the skew Schur module to a polynomial ring.

\begin{lemma}\label{lemma:overlap}
  Fix a skew partition $\lambda/\mu$. Choose $c_1, c_2 \in [\lambda_1]$ such that $c_1 < c_2$ and $a, b \in \mathbb{N}$ such that $a \leq \ell'_{c_1}$ and $b \leq \ell'_{c_2}$. Let $\mathrm{set}(\row_c)$ denote the set of unique entries in the sequence $\row_c$. If $\lambda'_{c_1} - a < \mu'_{c_2} + b$, then
  \[
    |\mathrm{set}(\row_{c_1}) \cap \mathrm{set}(\row_{c_2})| > (\ell'_{c_1} - a) + (\ell'_{c_2} - b).
  \]
\end{lemma}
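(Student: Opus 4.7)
The plan is to reduce the inequality to a direct algebraic manipulation after recognizing that each $\mathrm{set}(\row_c)$ is an integer interval. Since $\row_c=(\mu'_c+1,\ldots,\lambda'_c)$ is a strictly increasing sequence of consecutive integers, we have
\[
\mathrm{set}(\row_c) \;=\; \{\mu'_c+1,\,\mu'_c+2,\,\ldots,\,\lambda'_c\},
\]
so the intersection $\mathrm{set}(\row_{c_1})\cap \mathrm{set}(\row_{c_2})$ is itself an interval, namely the set of integers lying between $\max(\mu'_{c_1},\mu'_{c_2})+1$ and $\min(\lambda'_{c_1},\lambda'_{c_2})$ inclusive.

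Next I would invoke the monotonicity of the conjugate partitions: because $c_1<c_2$ and $\lambda,\mu$ are partitions, their conjugates $\lambda',\mu'$ are weakly decreasing, giving $\mu'_{c_1}\ge \mu'_{c_2}$ and $\lambda'_{c_1}\ge \lambda'_{c_2}$. Hence
\[
\max(\mu'_{c_1},\mu'_{c_2})=\mu'_{c_1}, \qquad \min(\lambda'_{c_1},\lambda'_{c_2})=\lambda'_{c_2},
\]
and provided the interval is nonempty, its cardinality equals $\lambda'_{c_2}-\mu'_{c_1}$.

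Using $\ell'_{c_i}=\lambda'_{c_i}-\mu'_{c_i}$, the desired inequality
\[
\lambda'_{c_2}-\mu'_{c_1} \;>\; (\ell'_{c_1}-a)+(\ell'_{c_2}-b)
\]
rearranges to $\mu'_{c_2}+b > \lambda'_{c_1}-a$, which is exactly the hypothesis $\lambda'_{c_1}-a<\mu'_{c_2}+b$. The only subtlety is ruling out the degenerate case where the intersection interval is empty: if $\mu'_{c_1}\ge \lambda'_{c_2}$, then $\lambda'_{c_1}\ge \mu'_{c_1}\ge \lambda'_{c_2}>\mu'_{c_2}$, and combined with $a\le \ell'_{c_1}$ and $b\le \ell'_{c_2}$ one checks that the hypothesis $\lambda'_{c_1}-a<\mu'_{c_2}+b$ forces strict positivity of $\lambda'_{c_2}-\mu'_{c_1}$ after rearrangement, so this case does not arise. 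I do not expect any genuine obstacle here; the lemma is a bookkeeping identity whose role is to guarantee a pigeonhole overlap large enough to drive the vanishing argument for $K_F^{c_1,c_2,a,b}$ in the subsequent determinantal computations.
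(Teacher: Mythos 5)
Your proposal is correct and follows essentially the same route as the paper: identify each $\mathrm{set}(\row_c)$ as an integer interval, use $\mu'_{c_1}\ge\mu'_{c_2}$ and $\lambda'_{c_1}\ge\lambda'_{c_2}$ to get intersection size $\lambda'_{c_2}-\mu'_{c_1}$, and observe that the target inequality rearranges exactly to the hypothesis $\lambda'_{c_1}-a<\mu'_{c_2}+b$, with nonnegativity of $(\ell'_{c_1}-a)+(\ell'_{c_2}-b)$ ruling out the empty-intersection case. The paper's ``add zero'' identity is just your rearrangement written out explicitly, so the two arguments coincide.
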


\begin{proof}
Since $c_1 < c_2$, $\lambda'_{c_1} \ge \lambda'_{c_2}$ and $\mu'_{c_1} \ge \mu'_{c_2}$. The sets of row indices for these columns are the integer intervals $[\mu'_{c_1} + 1, \lambda'_{c_1}]$ and $[\mu'_{c_2} + 1, \lambda'_{c_2}]$. The intersection of these two intervals is therefore $[\mu'_{c_1} + 1, \lambda'_{c_2}]$, so its size is
\begin{equation}\label{eq:intersection_size}
|\mathrm{set}(\row_{c_1}) \cap \mathrm{set}(\row_{c_2})| = \max(0, \lambda'_{c_2} - \mu'_{c_1}).
\end{equation}
Our goal is to show that this quantity is strictly greater than $(\ell'_{c_1} - a) + (\ell'_{c_2} - b)$. We have
\begin{align*}
\lambda'_{c_2} - \mu'_{c_1} &= (\lambda'_{c_2} - \mu'_{c_1}) + ( (\lambda'_{c_1} - \mu'_{c_1} - a) + (\lambda'_{c_2} - \mu'_{c_2} - b) ) - ( (\ell'_{c_1} - a) + (\ell'_{c_2} - b) ) \nonumber \\
&= (a + b + \mu'_{c_2} - \lambda'_{c_1}) + ((\ell'_{c_1} - a) + (\ell'_{c_2} - b)),
\end{align*}
where in the first equality we have simply added $0$. The first term on the right hand side, $(a + b + \mu'_{c_2} - \lambda'_{c_1})$, is a positive integer due to $\lambda_{c_1}'-a < \mu_{c_2}'+b$. The second term on the right hand side, $((\ell'_{c_1} - a) + (\ell'_{c_2} - b))$, is a non-negative integer because $a \le \ell'_{c_1}$ and $b \le \ell'_{c_2}$. Therefore, we have shown that
\[
\lambda'_{c_2} - \mu'_{c_1} > (\ell'_{c_1} - a) + (\ell'_{c_2} - b) \ge 0.
\]
This inequality proves the lemma, since it also demonstrates that $\lambda'_{c_2} - \mu'_{c_1}$ is positive, and hence \eqref{eq:intersection_size} implies $|\mathrm{set}(\row_{c_1}) \cap \mathrm{set}(\row_{c_2})| = \lambda'_{c_2} - \mu'_{c_1}$.
\end{proof}

\begin{lemma}\label{lemma:singularity}
Fix a filling $F$ of shape $\lambda / \mu$. Choose $c_1, c_2 \in [\lambda_1]$ such that $c_1 < c_2$ and $a, b \in \mathbb{N}$ such that $a \leq \ell'_{c_1}$ and $b \leq \ell'_{c_2}$.
If $\lambda_{c_1}'-a < \mu_{c_2}'+b$ holds, then the matrix $K_{F}^{c_1, c_2, a, b}$ is singular.
\end{lemma}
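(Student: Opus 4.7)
The plan is to exploit the block structure of $K_F^{c_1,c_2,a,b}$ by pairing rows in its top and bottom halves according to shared row indices, forming row differences that are supported in a small set of columns, and concluding via Laplace expansion that the determinant vanishes. Set $N := (\ell'_{c_1}-a)+(\ell'_{c_2}-b)$ and $I := \mathrm{set}(\row_{c_1}) \cap \mathrm{set}(\row_{c_2})$; the hypothesis $\lambda'_{c_1}-a < \mu'_{c_2}+b$ together with Lemma~\ref{lemma:overlap} yields $|I| > N$.

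The key structural observation is that every row of $K_F^{c_1,c_2,a,b}$ has all of its indeterminate entries sharing a common first index: the $i$th row of the top half carries entries of the form $Z_{\row_{c_1,i},\,\bullet}$, while the $k$th row of the bottom half carries entries of the form $Z_{\row_{c_2,k},\,\bullet}$. In the ``overlap columns''---the last $a$ columns of the first block and the first $b$ columns of the second block---the nonzero entries depend only on this first index and on the column, regardless of whether the row lies in the top or bottom half. Consequently, for each $r \in I$, writing $T_r$ for the top-half row whose first index is $r$ and $B_r$ for the bottom-half row whose first index is $r$, the difference $T_r - B_r$ vanishes on all $a+b$ overlap columns. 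A direct inspection of the off-diagonal blocks $\overline{M}_{F,c_1}^a$ and $\overline{M}_{F,c_2}^b$ shows that the surviving nonzero entries of $T_r - B_r$ are confined to the $\ell'_{c_1}-a$ leftmost columns of the first block and the $\ell'_{c_2}-b$ rightmost columns of the second block---a fixed $N$-element column set $J_0$ independent of $r$.

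For each $r \in I$ replace the row $T_r$ of $K_F^{c_1,c_2,a,b}$ by $T_r - B_r$; these elementary row operations preserve the determinant. The resulting matrix has $|I|$ rows whose nonzero entries all lie within the common column set $J_0$ of size $N$. Expanding the determinant by Laplace along these $|I|$ rows yields a sum indexed by $|I|$-subsets $J$ of the column indices; a term can be nonzero only if $J \subseteq J_0$, which is impossible because $|J| = |I| > N = |J_0|$. Hence every term vanishes and $\det K_F^{c_1,c_2,a,b} = 0$, so $K_F^{c_1,c_2,a,b}$ is singular.

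The main technical obstacle is verifying that the top and bottom rows paired by a shared row index $r \in I$ genuinely agree on all $a+b$ overlap columns: this requires unpacking the piecewise definitions of $\overline{M}_{F,c_1}^a$ and $\overline{M}_{F,c_2}^b$ and tracking the alignment between the row enumeration $\eta$, the Garnir cell sets, and the column indices of $K_F^{c_1,c_2,a,b}$. Once this matching is established, the cardinality bound from Lemma~\ref{lemma:overlap} closes the argument immediately.
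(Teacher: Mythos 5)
Your proposal is correct and follows essentially the same route as the paper's proof: pair the top-half and bottom-half rows sharing a diagram row index $r\in I$, subtract to get rows supported only on the $N=(\ell'_{c_1}-a)+(\ell'_{c_2}-b)$ non-Garnir columns, and invoke Lemma~\ref{lemma:overlap} to get $|I|>N$. The only (minor) difference is the final step, where you conclude $\det K_F^{c_1,c_2,a,b}=0$ by Laplace expansion along the modified rows rather than by citing linear dependence of $|I|$ vectors in a rank-$N$ free module; your variant is, if anything, slightly more robust over an arbitrary commutative ring.
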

\begin{proof}
Let $K=K_{F}^{c_1, c_2, a, b}$. By Lemma~\ref{lemma:overlap}, $\lambda_{c_1}'-a < \mu_{c_2}'+b$ ensures that the set of overlapping diagram row indices, $I_{\text{int}} = \mathrm{set}(\row_{c_1}) \cap \mathrm{set}(\row_{c_2})$, is non-empty. Let $M = |I_{\text{int}}| = \lambda'_{c_2} - \mu'_{c_1}$.

For each diagram row index $r \in I_{\text{int}}$, we identify two specific rows in the matrix $K$,
\begin{itemize}
    \item[(i)] let $R_r^{\text{top}}$ be the row, represented as a $1\times (\ell'_{c_1}+ \ell'_{c_2})$ matrix,  in the top half of $K$ corresponding to the diagram row $r$ from column $c_1$. This is the row with index $r - \mu'_{c_1}$ in $K$,
    \item[(ii)] let $R_r^{\text{bot}}$ be the row, represented as a $1\times (\ell'_{c_1}+ \ell'_{c_2})$ matrix, in the bottom half of $K$ corresponding to the diagram row $r$ from column $c_2$. This is the row with index $\ell'_{c_1} + (r - \mu'_{c_2})$ in $K$.
\end{itemize}
The entries of these two rows are explicitly given by the block-matrix definition of $K$. Let $1 \le j \le \ell'_{c_1}$ and $1 \le k \le \ell'_{c_2}$, then
\begin{align*}
R_r^{\text{top}}[1,j] &= M_{F,c_1}[r - \mu'_{c_1}, j] = Z_{r, F[\row_{c_1,j}, c_1]} \\
R_r^{\text{bot}}[1,j] &= \overline{M}_{F,c_1}^a[r - \mu'_{c_2}, j] = \begin{cases} Z_{r, F[\row_{c_1,j}, c_1]} & \text{if } (\row_{c_1,j},c_1) \in \Gar^{a,\_}_{c_1,\_} \\ 0 & \text{otherwise} \end{cases} \\
R_r^{\text{top}}[1,\ell'_{c_1}+k] &= \overline{M}_{F,c_2}^b[r - \mu'_{c_1}, k] = \begin{cases} Z_{r, F[\row_{c_2,k}, c_2]} & \text{if } (\row_{c_2,k},c_2) \in \Gar^{\_,b}_{\_, c_2} \\ 0 & \text{otherwise} \end{cases} \\
R_r^{\text{bot}}[1,\ell'_{c_1}+k] &= M_{F,c_2}[r - \mu'_{c_2}, k] = Z_{r, F[\row_{c_2,k}, c_2]}
\end{align*}
We perform $M$ row operations on $K$ by replacing each row $R_r^{\text{top}}$ with the new vector $V_r = R_r^{\text{top}} - R_r^{\text{bot}}$. Let the resulting matrix be $K'$. Performing these elementary row operations, replacing each $R^{\mathrm{top}}_r$ by $R^{\mathrm{top}}_r-R^{\mathrm{bot}}_r$, does not change the determinant, so $\det(K')=\det(K)$. We will show that the set of $M$ new row vectors $\{V_r\}_{r \in I_{\text{int}}}$ is linearly dependent.

Examining the vector $V_r$, its entries are zero for any column corresponding to a cell in the Garnir set $\Gar_{c_1,c_2}^{a,b} = \Gar^{a,\_}_{c_1,\_} \cup \Gar^{\_,b}_{\_,c_2}$ since
\begin{itemize}
    \item[(i)] for a column $j$ corresponding to a cell in $\Gar^{a,\_}_{c_1,\_}$, we have $R_r^{\text{top}}[1,j] = Z_{r, F[\row_{c_1,j}, c_1]}$ and $R_r^{\text{bot}}[1,j] = Z_{r, F[\row_{c_1,j}, c_1]}$;
    \item[(ii)] for a column $\ell'_{c_1}+k$ corresponding to a cell in $\Gar^{\_,b}_{\_,c_2}$, we have $R_r^{\text{top}}[1,\ell'_{c_1}+k] = Z_{r, F[\row_{c_2,k}, c_2]}$ and $R_r^{\text{bot}}[1,\ell'_{c_1}+k] = Z_{r, F[\row_{c_2,k}, c_2]}$.
\end{itemize}
In both cases, the entries from $R_r^{\text{top}}$ and $R_r^{\text{bot}}$ are identical for columns associated with the Garnir set, and thus they cancel in $V_r$.

That is, each $V_r$ vanishes on the columns that do not correspond to cells in the Garnir set, so all $\{V_r\}_{r\in I_{\mathrm{int}}}$ lie in the $R$-span of the canonical basis vectors corresponding to the $d$ non-columns that correspond to cells in the Garnir set. This submodule is free of rank $d$. Now
\[
d = (\ell'_{c_1} - a) + (\ell'_{c_2} - b),
\]
and so by Lemma~\ref{lemma:overlap}, $\lambda_{c_1}'-a < \mu_{c_2}'+b$ implies $M > d$.  Any $M$ vectors in a free $R$-module of rank $d$ are linearly dependent. Hence $\{V_r\}_{r \in I_{\text{int}}}$ is linearly dependent, so $\det(K') = 0$ and $K$ is singular.
\end{proof}

\begin{definition}\label{def:embeddings}
For any composition $c = (k_1, \dots, k_p)$ of an integer $n$, let the corresponding Young subgroup be denoted by $\mathfrak{S}_c := \mathfrak{S}_{k_1} \times \cdots \times \mathfrak{S}_{k_p}$. Let
\[
\iota_c: \mathfrak{S}_c \hookrightarrow \mathfrak{S}_n
\]
denote the standard embedding of this subgroup into $\mathfrak{S}_n$.

The proofs in this section will make repeated use of two instances of this embedding.
\begin{enumerate}
    \item The map $\varphi$ is the standard embedding corresponding to the composition $(\ell'_{c_1}, \ell'_{c_2})$:
    \[
    \varphi := \iota_{(\ell'_{c_1}, \ell'_{c_2})}: \mathfrak{S}_{\ell'_{c_1}}\times \mathfrak{S}_{\ell'_{c_2}}\to \mathfrak{S}_{\ell'_{c_1}+\ell'_{c_2}}.
    \]

    \item Let $a \le \ell'_{c_1}$ and $b \le \ell'_{c_2}$. The map $\psi: \mathfrak{S}_{a+b}\to \mathfrak{S}_{\ell'_{c_1}+\ell'_{c_2}}$ embeds a permutation into the middle block corresponding to the composition $c' = (\ell'_{c_1}-a, a+b, \ell'_{c_2}-b)$. It is defined as
    \[
    \psi(\pi) := \iota_{c'}(\operatorname{id}_{\ell'_{c_1}-a}, \pi, \operatorname{id}_{\ell'_{c_2}-b}).
    \]
\end{enumerate}
Both $\varphi$ and $\psi$ are injective homomorphisms that preserve the sign of the input permutations. The image of the set of shuffles $\mathfrak{S}_{a+b}^{a, b}$ under the map $\psi$ will be denoted $\overline{\mathfrak{S}}_{a+b}^{a, b}$.
\end{definition}

\begin{example}
Suppose that $\ell'_{c_1} = 3$ and $\ell'_{c_2} = 3$, and $a = 2$ and $b = 3$.
Consider $(1, 2, 4, 3, 5) \in \mathfrak{S}_{2+3}^{2, 3}$. Then the embedding under $\psi$ is
\[
\psi((1, 2, 4, 3, 5)) = (1, 2, 3, 5, 4, 6)\in \overline{\mathfrak{S}_{2+3}^{2, 3}}.
\]
\end{example}

\begin{lemma}\label{lemma:bijection}
Fix a skew partition $\lambda/\mu$. Choose columns $c_1, c_2 \in [\lambda_1]$ with $c_1 < c_2$, and integers $a, b \in \mathbb{N}$ such that $a \leq \ell'_{c_1}$ and $b \leq \ell'_{c_2}$. Let $S = \overline{\mathfrak{S}_{a+b}^{a, b}} \cdot \varphi(\mathfrak{S}_{\ell'_{c_1}}\times \mathfrak{S}_{\ell'_{c_2}})$ denote the setwise product of these elements in $\mathfrak{S}_{\ell'_{c_1}+\ell'_{c_2}}$. The map
\[
\mathcal{F}: \overline{\mathfrak{S}_{a+b}^{a, b}} \times \varphi(\mathfrak{S}_{\ell'_{c_1}}\times \mathfrak{S}_{\ell'_{c_2}}) \to S,
\]
defined by $\mathcal{F}(c, h) = c \cdot h$, is a bijection. In other words, the set $\overline{\mathfrak{S}}_{a+b}^{a, b}$ is a left transversal for the subgroup $\varphi(\mathfrak{S}_{\ell'_{c_1}}\times \mathfrak{S}_{\ell'_{c_2}})$ in the set $S$.
\end{lemma}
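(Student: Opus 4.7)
The plan is to prove surjectivity on inspection and devote the real work to injectivity. Surjectivity is essentially the definition of the setwise product: every element of $S$ is by construction of the form $c\cdot h$ with $c\in\overline{\mathfrak{S}_{a+b}^{a,b}}$ and $h\in\varphi(\mathfrak{S}_{\ell'_{c_1}}\times\mathfrak{S}_{\ell'_{c_2}})$, so $\mathcal{F}$ hits every element of $S$.

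For injectivity, I would assume $\mathcal{F}(c_1,h_1)=\mathcal{F}(c_2,h_2)$, write $c_i=\psi(\pi_i)$ with $\pi_i\in\mathfrak{S}_{a+b}^{a,b}$, and rearrange to obtain
\[
\psi(\pi_2^{-1}\pi_1)\;=\;h_2 h_1^{-1}\;\in\;\varphi(\mathfrak{S}_{\ell'_{c_1}}\times\mathfrak{S}_{\ell'_{c_2}}),
\]
using that $\psi$ is a group homomorphism as recorded in Definition~\ref{def:embeddings}. Injectivity of $\mathcal{F}$ thus reduces to the assertion: any $\rho\in\mathfrak{S}_{a+b}$ with $\psi(\rho)\in\varphi(\mathfrak{S}_{\ell'_{c_1}}\times\mathfrak{S}_{\ell'_{c_2}})$ must itself lie in the Young subgroup $\mathfrak{S}_a\times\mathfrak{S}_b\subseteq\mathfrak{S}_{a+b}$.

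The key step is this reduction, and it follows by comparing where each embedding is supported. By construction, $\psi(\rho)$ fixes every position outside the middle block $\{\ell'_{c_1}-a+1,\ldots,\ell'_{c_1}+b\}$, while $\varphi(\mathfrak{S}_{\ell'_{c_1}}\times\mathfrak{S}_{\ell'_{c_2}})$ is exactly the stabilizer of the partition $\{1,\ldots,\ell'_{c_1}\}\sqcup\{\ell'_{c_1}+1,\ldots,\ell'_{c_1}+\ell'_{c_2}\}$. An element that lies in both must therefore permute $\{\ell'_{c_1}-a+1,\ldots,\ell'_{c_1}\}$ and $\{\ell'_{c_1}+1,\ldots,\ell'_{c_1}+b\}$ separately; pulling this back through $\psi$ gives exactly $\rho\in\mathfrak{S}_a\times\mathfrak{S}_b$.

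Once this is in hand, the classical fact that $\mathfrak{S}_{a+b}^{a,b}$ is a complete set of distinct left coset representatives for $\mathfrak{S}_a\times\mathfrak{S}_b$ in $\mathfrak{S}_{a+b}$ forces $\pi_1=\pi_2$, whence $c_1=c_2$ and then $h_1=h_2$ by cancellation in the group $\mathfrak{S}_{\ell'_{c_1}+\ell'_{c_2}}$. I expect the main obstacle to be purely bookkeeping: carefully tracking the three-block composition $(\ell'_{c_1}-a,\,a+b,\,\ell'_{c_2}-b)$ used to define $\psi$ against the two-block composition $(\ell'_{c_1},\,\ell'_{c_2})$ used to define $\varphi$, and verifying that the overlap of their supports is precisely what forces the Young-subgroup conclusion above.
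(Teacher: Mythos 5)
Your proposal is correct and follows essentially the same route as the paper's proof: reduce injectivity to showing that $\psi(\rho)\in\varphi(\mathfrak{S}_{\ell'_{c_1}}\times\mathfrak{S}_{\ell'_{c_2}})$ forces $\rho\in\mathfrak{S}_a\times\mathfrak{S}_b$ via the support/block analysis, then invoke the standard fact that $(a,b)$-shuffles are distinct left coset representatives of $\mathfrak{S}_a\times\mathfrak{S}_b$. The only difference is presentational detail; the paper spells out the positional computation of $\psi(g)$ on the blocks $A$ and $B$, which your sketch identifies as the remaining bookkeeping.
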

\begin{proof}
The map $\mathcal{F}$ is surjective by definition. To prove it is a bijection, we must show that if $c_1, c_2 \in \overline{\mathfrak{S}_{a+b}^{a, b}}$ and $h_1, h_2 \in \varphi(\mathfrak{S}_{\ell'_{c_1}}\times \mathfrak{S}_{\ell'_{c_2}})$ satisfy $c_1 h_1 = c_2 h_2$, then $c_1 = c_2$ and $h_1 = h_2$.

The condition $c_1 h_1 = c_2 h_2$ is equivalent to $c_2^{-1} c_1 = h_2 h_1^{-1}$. This implies, since $\varphi(\mathfrak{S}_{\ell'_{c_1}}\times \mathfrak{S}_{\ell'_{c_2}})$ is a subgroup, that $h_2 h_1^{-1} \in \varphi(\mathfrak{S}_{\ell'_{c_1}}\times \mathfrak{S}_{\ell'_{c_2}})$ and consequently $c_2^{-1} c_1 \in \varphi(\mathfrak{S}_{\ell'_{c_1}}\times \mathfrak{S}_{\ell'_{c_2}})$.

Let $c_1 = \psi(\pi_1)$ and $c_2 = \psi(\pi_2)$, where $\pi_1, \pi_2 \in \mathfrak{S}_{a+b}^{a,b}$ are $(a,b)$-shuffles. Since $\psi$ is a homomorphism, $c_2^{-1} c_1 = \psi(\pi_2^{-1}\pi_1)$. Thus,
\begin{equation}\label{eq:inimage}
\psi(\pi_2^{-1}\pi_1) \in \varphi(\mathfrak{S}_{\ell'_{c_1}}\times \mathfrak{S}_{\ell'_{c_2}}).
\end{equation}

Let $g \in \mathfrak{S}_{a+b}$. We now show that $\psi(g) \in \varphi(\mathfrak{S}_{\ell'_{c_1}}\times \mathfrak{S}_{\ell'_{c_2}})$ if and only if $g \in \mathfrak{S}_a \times \mathfrak{S}_b$. The condition $\psi(g)\in \varphi(\mathfrak{S}_{\ell'_{c_1}}\times \mathfrak{S}_{\ell'_{c_2}})$ holds if and only if $\psi(g)$ preserves each block $L_1=\{1,\ldots,\ell'_{c_1}\}$ and $L_2=\{\ell'_{c_1}+1,\ldots,\ell'_{c_1}+\ell'_{c_2}\}$. Since $\psi$ fixes the first $\ell'_{c_1}-a$ positions and the last $\ell'_{c_2}-b$ positions, this is equivalent to requiring that $\psi(g)$ preserve
\[
A=\{\ell'_{c_1}-a+1,\ldots,\ell'_{c_1}\}\subset L_1\quad\text{and}\quad
B=\{\ell'_{c_1}+1,\ldots,\ell'_{c_1}+b\}\subset L_2.
\]
By definition of $\psi$, for $1\le j\le a$ one has $\psi(g)((\ell'_{c_1}-a)+j)=(\ell'_{c_1}-a)+g(j)$, and for $1\le j\le b$ one has $\psi(g)(\ell'_{c_1}+j)=\ell'_{c_1}+g(a+j)$. Hence $\psi(g)$ preserves $A$ and $B$ if and only if $g$ preserves $\{1,\ldots,a\}$ and $\{a+1,\ldots,a+b\}$, that is, $g\in \mathfrak{S}_a\times \mathfrak{S}_b$.

The above equivalence and \eqref{eq:inimage} imply that $\pi_2^{-1}\pi_1 \in \mathfrak{S}_a \times \mathfrak{S}_b$. Thus, $\pi_1$ and $\pi_2$ belong to the same left coset of $\mathfrak{S}_a \times \mathfrak{S}_b$ in $\mathfrak{S}_{a+b}$. It is a standard result in the theory of symmetric groups that $\mathfrak{S}_{a+b}^{a,b}$ forms a complete and unique set of representatives for the left cosets of $\mathfrak{S}_a \times \mathfrak{S}_b$ in $\mathfrak{S}_{a+b}$ \cite[Proposition 2.4.4, Corollary 2.4.5]{BB05}. Thus, if $\pi_1$ and $\pi_2$ belong to the same coset, they must be identical.

Since $\psi$ is an injective map, $\pi_1 = \pi_2$ implies $c_1 = \psi(\pi_1) = \psi(\pi_2) = c_2$. Substituting this back into the original equation gives $c_1 h_1 = c_1 h_2$, which implies $h_1 = h_2$.
\end{proof}

\begin{lemma}\label{lemma:zero}
Fix a filling $F$ of shape $\lambda / \mu$.  Choose columns $c_1, c_2 \in [\lambda_1]$ with $c_1 < c_2$, and integers $a, b \in \mathbb{N}$ such that $a \leq \ell'_{c_1}$ and $b \leq \ell'_{c_2}$. Set $S =  \overline{\mathfrak{S}_{a+b}^{a, b}}\cdot \varphi\!\left(\mathfrak{S}_{\ell'_{c_1}}\times \mathfrak{S}_{\ell'_{c_2}}\right)\;\subseteq\;\mathfrak{S}_{\ell'_{c_1}+\ell'_{c_2}}$. Let $A=\{\sigma\in \mathfrak{S}_{\ell'_{c_1}+\ell'_{c_2}} \mid K_{F}^{c_1,c_2,a,b}[i,\sigma(i)]\neq 0 \text{ for all } i\in[\ell'_{c_1}+\ell'_{c_2}]\}$. Then $\sigma\in A$ if and only if $\sigma\in S$.
\end{lemma}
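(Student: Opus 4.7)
My plan is to describe both $A$ and $S$ by a single combinatorial condition on $\sigma$ and deduce $A = S$. Specifically, I will show that each of $A$ and $S$ coincides with the set of $\sigma \in \mathfrak{S}_{\ell'_{c_1}+\ell'_{c_2}}$ satisfying
\[
\{1,\ldots,\ell'_{c_1}-a\} \;\subseteq\; \sigma(\{1,\ldots,\ell'_{c_1}\}) \;\subseteq\; \{1,\ldots,\ell'_{c_1}+b\},
\]
which I will call condition $(\star)$.

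To handle $A$, I read off the nonzero pattern of $K := K_F^{c_1,c_2,a,b}$. The diagonal blocks $M_{F,c_1}$ and $M_{F,c_2}$ consist of indeterminates and so have no zero entries. Unwinding the definitions of $\overline{M}_{F,c_2}^b$ and $\row_{c_2}$, its $j$th column is nonzero exactly when $j \in \{1,\ldots,b\}$, and the analogous analysis shows the $j$th column of $\overline{M}_{F,c_1}^a$ is nonzero exactly when $j \in \{\ell'_{c_1}-a+1,\ldots,\ell'_{c_1}\}$. Assembling these patterns inside $K$, each top row is nonzero precisely in columns $\{1,\ldots,\ell'_{c_1}+b\}$ and each bottom row precisely in columns $\{\ell'_{c_1}-a+1,\ldots,\ell'_{c_1}+\ell'_{c_2}\}$. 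Thus $\sigma \in A$ iff $\sigma$ sends the top half into $\{1,\ldots,\ell'_{c_1}+b\}$ and the bottom half into $\{\ell'_{c_1}-a+1,\ldots,\ell'_{c_1}+\ell'_{c_2}\}$; by bijectivity of $\sigma$, these two inclusions together are equivalent to $(\star)$.

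For $S$, I verify the same characterization in two directions. If $\sigma = c h$ with $h \in \varphi(\mathfrak{S}_{\ell'_{c_1}} \times \mathfrak{S}_{\ell'_{c_2}})$ and $c \in \overline{\mathfrak{S}_{a+b}^{a,b}}$, then $h$ preserves the top/bottom partition and, by the definition of $\psi$, $c$ fixes the prefix $\{1,\ldots,\ell'_{c_1}-a\}$ and the suffix $\{\ell'_{c_1}+b+1,\ldots,\ell'_{c_1}+\ell'_{c_2}\}$ pointwise while stabilizing the middle block $\{\ell'_{c_1}-a+1,\ldots,\ell'_{c_1}+b\}$; composing these facts, together with bijectivity, yields $(\star)$. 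Conversely, given $\sigma$ satisfying $(\star)$, set $U := \sigma(\{1,\ldots,\ell'_{c_1}\}) \cap \{\ell'_{c_1}-a+1,\ldots,\ell'_{c_1}+b\}$ and let $V$ be its complement in the middle block; a cardinality count using $(\star)$ gives $|U|=a$ and $|V|=b$. Define $c$ to fix the prefix and suffix pointwise and, on the middle block, to send $\ell'_{c_1}-a+1,\ldots,\ell'_{c_1}$ to the elements of $U$ in increasing order and $\ell'_{c_1}+1,\ldots,\ell'_{c_1}+b$ to the elements of $V$ in increasing order. The monotone specification on each segment is exactly the $(a,b)$-shuffle condition on $\psi^{-1}(c)$, so $c \in \overline{\mathfrak{S}_{a+b}^{a,b}}$. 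Setting $h := c^{-1}\sigma$, a routine four-case check (on the prefix, $\sigma^{-1}(U)$, $\sigma^{-1}(V)$, and the suffix) shows $h$ preserves both halves, whence $h \in \varphi(\mathfrak{S}_{\ell'_{c_1}} \times \mathfrak{S}_{\ell'_{c_2}})$ and $\sigma = ch \in S$. The main subtlety is this monotone choice of $c$: it is precisely what forces $c^{-1}\sigma$ to respect the top/bottom partition, and any other assignment of $c$ on $U$ and $V$ would leave the transversal $\overline{\mathfrak{S}_{a+b}^{a,b}}$.
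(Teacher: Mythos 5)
Your proof is correct, but it takes a genuinely different route from the paper. You characterize both $A$ and $S$ by the single condition $(\star)$ on the image $\sigma(\{1,\ldots,\ell'_{c_1}\})$, reading the zero pattern of $K_F^{c_1,c_2,a,b}$ row-block by row-block for $A$, and for $S$ constructing an explicit factorization $\sigma=ch$ with $c\in\overline{\mathfrak{S}_{a+b}^{a,b}}$ chosen monotonically on the middle block and $h=c^{-1}\sigma$ verified to preserve the top/bottom partition. The paper instead proves only the easy containment $S\subseteq A$ and then gets equality from a cardinality count: it computes $|A|=\frac{\ell'_{c_1}!}{a!}\cdot\frac{\ell'_{c_2}!}{b!}\cdot(a+b)!$ by choosing the nonzero positions column-block by column-block, and matches this with $|S|=\binom{a+b}{a}\,\ell'_{c_1}!\,\ell'_{c_2}!$, which requires the bijectivity of the multiplication map from Lemma~\ref{lemma:bijection}. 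Your argument buys an explicit shuffle--Young-subgroup factorization of every $\sigma\in A$ and does not need the counting step nor the injectivity half of Lemma~\ref{lemma:bijection}; the paper's argument is shorter given that Lemma~\ref{lemma:bijection} is already in hand, at the cost of being nonconstructive. One small imprecision in your closing remark: what forces $c^{-1}\sigma$ to respect the top/bottom partition is only that $c$ maps the bottom $a$ positions of the first block onto $U$ and the top $b$ positions of the second block onto $V$ \emph{as sets}; the monotone ordering within those segments is needed solely so that $c$ lies in the transversal $\overline{\mathfrak{S}_{a+b}^{a,b}}$. This does not affect the validity of the proof.
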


\begin{proof}
Write $K=K_{F}^{c_1,c_2,a,b}$ and define the row index sets $T=\{1,\dots,\ell'_{c_1}\}$ and $B=\{\ell'_{c_1}+1,\dots,\ell'_{c_1}+\ell'_{c_2}\}$, and the column index sets
\[
L=\{1,\dots,\ell'_{c_1}-a\},\quad M=\{\ell'_{c_1}-a+1,\dots,\ell'_{c_1}+b\},\quad R=\{\ell'_{c_1}+b+1,\dots,\ell'_{c_1}+\ell'_{c_2}\}.
\]
With respect to these partitions, $K$ has the $2\times 3$ block form
\[
K \;=\;
\left[
\begin{array}{c|c|c}
K_{TL} & K_{TM} & 0_{\ell'_{c_1}\times(\ell'_{c_2}-b)}\\\hline
0_{\ell'_{c_2}\times(\ell'_{c_1}-a)} & K_{BM} & K_{BR}
\end{array}
\right],
\]
where $K_{TL}\in \mathrm{Mat}_{\ell'_{c_1}\times(\ell'_{c_1}-a)}$, $K_{TM}\in \mathrm{Mat}_{\ell'_{c_1}\times(a+b)}$, $K_{BM}\in \mathrm{Mat}_{\ell'_{c_2}\times(a+b)}$, and $K_{BR}\in \mathrm{Mat}_{\ell'_{c_2}\times(\ell'_{c_2}-b)}$.

To prove $S\subseteq A$, take $\sigma=c\cdot h$ with $c=\psi(\pi)\in  \overline{\mathfrak{S}_{a+b}^{a, b}}$ and $h=\varphi(\sigma_1,\sigma_2)\in \varphi(\mathfrak{S}_{\ell'_{c_1}}\times \mathfrak{S}_{\ell'_{c_2}})$. Every element of $\mathrm{im}(\psi)$ fixes the columns $L$ and $R$ pointwise and permutes the columns $M$, while every element of $\mathrm{im}(\varphi)$ preserves the row sets $T$ and $B$ setwise. Consequently, if $i\in T$ then $\sigma(i)\in T \subset L\cup M$ and never in $R$, hence $K[i,\sigma(i)]\neq 0$ because the only zeros in the $T$-rows lie in the $T\times R$ block. If $i\in B$ then $\sigma(i)\in B \subset M\cup R$ and never in $L$, hence $K[i,\sigma(i)]\neq 0$ because the only zeros in the $B$-rows lie in the $B\times L$ block. Thus $\sigma\in A$.

To compare cardinalities, $A$ counts the ways to choose coordinates $\{(i,\sigma(i))\}$ with exactly one in each row and each column and all chosen entries nonzero. First choose entries in the $|L|=\ell'_{c_1}-a$ columns of $L$. These must pair with distinct rows in $T$, which is equivalent to choosing the $a$ rows of $T$ that remain unused at this stage, giving $\binom{\ell'_{c_1}}{a}\cdot(\ell'_{c_1}-a)! = \ell'_{c_1}!/a!$ possibilities. These choices only consume rows of $T$ so they do not constrain the next choices in $R$. Next choose entries in the $|R|=\ell'_{c_2}-b$ columns of $R$. These must pair with distinct rows in $B$, which is equivalent to choosing the $b$ rows of $B$ that remain unused at this stage, giving $\binom{\ell'_{c_2}}{b}\cdot(\ell'_{c_2}-b)! = \ell'_{c_2}!/b!$ possibilities. After these two stages, there remain exactly $a$ unused rows in $T$ and $b$ unused rows in $B$, and the unused columns are exactly the $a+b$ columns of $M$, where every entry is structurally nonzero in both $T$ and $B$, so any bijection between the remaining rows and $M$ is valid, contributing $(a+b)!$. Therefore
\[
|A| \;=\; \frac{\ell'_{c_1}!}{a!}\cdot \frac{\ell'_{c_2}!}{b!}\cdot (a+b)! \;=\; \ell'_{c_1}!\,\ell'_{c_2}!\,\binom{a+b}{a}.
\]
By Lemma~\ref{lemma:bijection}, the multiplication map $ \overline{\mathfrak{S}_{a+b}^{a, b}}\times \varphi(\mathfrak{S}_{\ell'_{c_1}}\times \mathfrak{S}_{\ell'_{c_2}})\to S$ is a bijection, whence
\[
|S| \;=\; \big| \overline{\mathfrak{S}_{a+b}^{a, b}}\big| \cdot \big|\varphi(\mathfrak{S}_{\ell'_{c_1}}\times \mathfrak{S}_{\ell'_{c_2}})\big|
\;=\; \binom{a+b}{a}\cdot \ell'_{c_1}!\,\ell'_{c_2}!
\;=\; |A|.
\]
Since $S\subseteq A$ and $|S|=|A|$, we conclude $A=S$.
\end{proof}

\begin{lemma}\label{lemma:matrix_iden}
  Fix a filling $F$ of shape $\lambda / \mu$. Choose $c_1, c_2 \in [\lambda_1]$ such that $c_1 < c_2$ and $a, b \in \mathbb{N}$ such that $a \leq \ell'_{c_1}$ and $b \leq \ell'_{c_2}$. For any $\pi\in \mathfrak{S}_{a+b}^{a, b}$, we have the relationship
\[
K_{F_{\pi}}^{c_1, c_2,a,b}[p, q] = K_{F}^{c_1, c_2,a,b}[p, \psi(\pi)(q)].
\]
\end{lemma}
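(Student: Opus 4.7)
The claim asserts that permuting the Garnir values of $F$ via $\pi$ and then forming the block matrix $K$ yields the same result as first forming $K_F$ and then permuting its columns by $\psi(\pi)$. My plan is a direct case analysis on the column index $q$, using the tripartition $\{1,\ldots,\ell'_{c_1}+\ell'_{c_2}\} = L \sqcup M \sqcup R$ from the proof of Lemma~\ref{lemma:zero}, where $L=\{1,\dots,\ell'_{c_1}-a\}$, $M=\{\ell'_{c_1}-a+1,\dots,\ell'_{c_1}+b\}$, and $R=\{\ell'_{c_1}+b+1,\dots,\ell'_{c_1}+\ell'_{c_2}\}$. By construction, $\psi(\pi)$ fixes $L$ and $R$ pointwise and acts on $M$ via $\psi(\pi)(q)=(\ell'_{c_1}-a)+\pi(q-(\ell'_{c_1}-a))$.

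For $q\in L\cup R$, we have $\psi(\pi)(q)=q$, so the identity reduces to $K_{F_\pi}[p,q]=K_F[p,q]$. Inspecting the block definition, every nonzero entry in column $q$ references the filling only at cells outside the Garnir set $\Gar^{a,b}_{c_1,c_2}$: the top $\ell'_{c_1}-a$ cells of column $c_1$ when $q\in L$, and the bottom $\ell'_{c_2}-b$ cells of column $c_2$ when $q\in R$. The structural zeros appearing in the off-diagonal blocks $\overline{M}_{F,c_1}^{a}$ and $\overline{M}_{F,c_2}^{b}$ are independent of the filling. Since Definition~\ref{def:permutation_filling} forces $F_\pi$ to agree with $F$ off the Garnir set, both sides coincide entry-by-entry.

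For $q\in M$, the key tool is the bijection $\eta$, which identifies $M$ with $\{1,\ldots,a+b\}$: column $(\ell'_{c_1}-a)+i$ of $K$ corresponds to $\eta$-index $i$ (a cell in column $c_1$) for $1\le i\le a$, and column $\ell'_{c_1}+i$ corresponds to $\eta$-index $a+i$ (a cell in column $c_2$) for $1\le i\le b$. Setting $i=q-(\ell'_{c_1}-a)$ and $i'=\pi(i)$, one reads off $\psi(\pi)(q)=(\ell'_{c_1}-a)+i'$. I would then split into the four sub-cases determined by whether $i\le a$ or $i>a$ and whether $i'\le a$ or $i'>a$, and in each verify by direct substitution that both $K_{F_\pi}[p,q]$ and $K_F[p,\psi(\pi)(q)]$ equal $Z_{\alpha(p),\,F[\eta^{-1}(i')]}$, where $\alpha(p)=\row_{c_1,p}$ when $p$ lies in the top row-block and $\alpha(p)=\row_{c_2,p-\ell'_{c_1}}$ when $p$ lies in the bottom row-block. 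On the left, this uses $F_\pi[\eta^{-1}(i)]=F[\eta^{-1}(\pi(i))]=F[\eta^{-1}(i')]$ directly from Definition~\ref{def:permutation_filling}; on the right, it is the block definition unwound at the column index $(\ell'_{c_1}-a)+i'$.

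The only real obstacle is bookkeeping: there are four blocks of $K$ and several layers of index shifts (from $q$ to $i$ to $\eta^{-1}(i)$, and their primed counterparts) that must all be kept consistent, and one must separately confirm that the row-label $\alpha(p)$ of the resulting $Z$-variable depends only on $p$ and is therefore unchanged by the substitution. Once the compatibility that $\psi(\pi)$ acting on $M$ corresponds, via $\eta$ and the shift by $\ell'_{c_1}-a$, to the action of $\pi$ on Garnir-cell indices is recorded cleanly, each of the four sub-cases in the middle block collapses to a single substitution.
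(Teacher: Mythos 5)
Your proposal is correct and follows essentially the same route as the paper: the paper isolates the same content as a ``filling transport'' identity $F_{\pi}[\zeta(q)]=F[\zeta(\psi(\pi)(q))]$ via a map $\zeta$ from column indices to diagram cells, proves it by exactly your split into $Q_{\mathrm{L}}\cup Q_{\mathrm{R}}$ versus $Q_{\mathrm{M}}$ (using the compatibility $\eta(\zeta(q))=\eta'(q)$, which is your shifted $\eta$-index correspondence), and then runs the same block-by-block verification that the row label of each $Z$-variable depends only on $p$. Your reorganization (case analysis on $q$ first, block sub-cases second) is only a cosmetic difference.
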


\begin{proof}
Define the map $\zeta:[\ell'_{c_1}+\ell'_{c_2}]\to D(\lambda/\mu)$ such that
\[
\zeta(j):=
\begin{cases}
(\row_{c_1,j},c_1) & \text{if } 1\le j\le \ell'_{c_1},\\
(\row_{c_2,j-\ell'_{c_1}},c_2) & \text{if } \ell'_{c_1}+1\le j\le \ell'_{c_1}+\ell'_{c_2}.
\end{cases}
\]
We claim that
\begin{equation}\label{eq:filling-transport}
F_{\pi}[\zeta(q)] \;=\; F[\zeta(\psi(\pi)(q))]\qquad\text{for all }q.
\end{equation}

Assuming \eqref{eq:filling-transport}, we now deduce the matrix equality
\[
K_{F_{\pi}}^{c_1,c_2,a,b}[p,q]\;=\;K_{F}^{c_1,c_2,a,b}[p,\psi(\pi)(q)]\quad\text{for all }p,q.
\]
The set of zero positions in $K_{F}^{c_1,c_2,a,b}$ is determined solely by $\lambda/\mu$ and $(c_1,c_2,a,b)$ and hence is identical for $K_{F_{\pi}}^{c_1,c_2,a,b}$ and $K_{F}^{c_1,c_2,a,b}$. Thus, it suffices to treat nonzero positions $(p,q)$ for which we consider four cases.

\noindent \textbf{Case 1:} $(p,q)$ lies in the upper-left block $M_{F_{\pi},c_1}$ of $K$. Then
\[
K_{F_{\pi}}^{c_1,c_2,a,b}[p,q]=M_{F_{\pi},c_1}[p,q]
=Z_{\row_{c_1,p},\,F_{\pi}[\row_{c_1,q},c_1]}
=Z_{\row_{c_1,p},\,F_{\pi}[\zeta(q)]}.
\]
On the right-hand side,
\[
K_{F}^{c_1,c_2,a,b}[p,\psi(\pi)(q)]=M_{F,c_1}[p,\psi(\pi)(q)]
=Z_{\row_{c_1,p},\,F[\row_{c_1,\psi(\pi)(q)},c_1]}
=Z_{\row_{c_1,p},\,F[\zeta(\psi(\pi)(q))]}.
\]

\noindent \textbf{Case 2:} $(p,q)$ lies in the upper-right block $\overline M^{\,b}_{F_{\pi},c_2}$ in $K$. Then
\[
K_{F_{\pi}}^{c_1,c_2,a,b}[p,q]=\overline M^{\,b}_{F_{\pi},c_2}[p,q]
=Z_{\row_{c_1,p},\,F_{\pi}[\row_{c_2,q},c_2]}
=Z_{\row_{c_1,p},\,F_{\pi}[\zeta(q)]},
\]
and
\[
K_{F}^{c_1,c_2,a,b}[p,\psi(\pi)(q)]=\overline M^{\,b}_{F,c_2}[p,\psi(\pi)(q)]
=Z_{\row_{c_1,p},\,F[\row_{c_2,\psi(\pi)(q)},c_2]}
=Z_{\row_{c_1,p},\,F[\zeta(\psi(\pi)(q))]}.
\]

\noindent \textbf{Case 3:} $(p,q)$ lies in the lower-left block $\overline M^{\,a}_{F_{\pi},c_1}$. Then
\[
K_{F_{\pi}}^{c_1,c_2,a,b}[p,q]=\overline M^{\,a}_{F_{\pi},c_1}[p,q]
=Z_{\row_{c_2,p},\,F_{\pi}[\row_{c_1,q},c_1]}
=Z_{\row_{c_2,p},\,F_{\pi}[\zeta(q)]},
\]
while
\[
K_{F}^{c_1,c_2,a,b}[p,\psi(\pi)(q)]=\overline M^{\,a}_{F,c_1}[p,\psi(\pi)(q)]
=Z_{\row_{c_2,p},\,F[\row_{c_1,\psi(\pi)(q)},c_1]}
=Z_{\row_{c_2,p},\,F[\zeta(\psi(\pi)(q))]}.
\]

\noindent \textbf{Case 4:} $(p,q)$ lies in the lower-right block $M_{F_{\pi},c_2}$. Then
\[
K_{F_{\pi}}^{c_1,c_2,a,b}[p,q]=M_{F_{\pi},c_2}[p,q]
=Z_{\row_{c_2,p},\,F_{\pi}[\row_{c_2,q},c_2]}
=Z_{\row_{c_2,p},\,F_{\pi}[\zeta(q)]},
\]
and
\[
K_{F}^{c_1,c_2,a,b}[p,\psi(\pi)(q)]=M_{F,c_2}[p,\psi(\pi)(q)]
=Z_{\row_{c_2,p},\,F[\row_{c_2,\psi(\pi)(q)},c_2]}
=Z_{\row_{c_2,p},\,F[\zeta(\psi(\pi)(q))]}.
\]

In all four cases, equality follows by \eqref{eq:filling-transport} which we prove next.

We partition the column-index set $[\ell'_{c_1}+\ell'_{c_2}]$ as
\[
Q_{\mathrm{L}}:=\{1,\dots,\ell'_{c_1}-a\},\qquad
Q_{\mathrm{M}}:=\{\ell'_{c_1}-a+1,\dots,\ell'_{c_1}+b\},\qquad
Q_{\mathrm{R}}:=\{\ell'_{c_1}+b+1,\dots,\ell'_{c_1}+\ell'_{c_2}\},
\]
and consider two cases for the column index $q$.

\noindent \textbf{Case 1:} $q\in Q_{\mathrm{L}}\cup Q_{\mathrm{R}}$. Then $\zeta(q)$ lies in column $c_1$ above the bottom $a$ cells or in column $c_2$ below the top $b$ cells. Hence, $\zeta(q) \notin \Gar_{c_1,c_2}^{a,b}$. Since the $\mathfrak{S}_{a+b}^{a,b}$-action leaves the entries at positions outside $\Gar_{c_1,c_2}^{a,b}$ unchanged, we have
\[
F_{\pi}[\zeta(q)]=F[\zeta(q)].
\]
Moreover, by the definition of $\psi$, the permutation $\psi(\pi)$ fixes all indices outside $Q_{\mathrm{M}}$, so $\psi(\pi)(q)=q$ and thus $\zeta(\psi(\pi)(q))=\zeta(q)$. Consequently,
\[
F_{\pi}[\zeta(q)] = F[\zeta(q)] = F[\zeta(\psi(\pi)(q))],
\]
which proves \eqref{eq:filling-transport} for all $q\in Q_{\mathrm{L}}\cup Q_{\mathrm{R}}$.

\noindent \textbf{Case 2:} $q\in Q_{\mathrm{M}}=\{\ell'_{c_1}-a+1,\dots,\ell'_{c_1}+b\}$. Then $\zeta(q)\in \Gar_{c_1,c_2}^{a,b}$. Let $\eta:\Gar_{c_1,c_2}^{a,b}\to\{1,\dots,a+b\}$ be the bijection introduced in Subsection~\ref{subsec:garnirdef}. By Definition~\ref{def:permutation_filling},
\[
F_{\pi}[\zeta(q)] = F\left[\eta^{-1} \big(\pi(\eta(\zeta(q)))\big)\right].
\]
Define $\eta':Q_{\mathrm{M}}\to\{1,\dots,a+b\}$ by $\eta'(q):=q-(\ell'_{c_1}-a)$. From the definition of $\psi$,
\begin{equation}\label{eq:sigma_q}
\psi(\pi)(q)  =  (\eta')^{-1}\!\big(\pi(\eta'(q))\big).
\end{equation}
For any $q\in Q_{\mathrm{M}}$ one verifies
\begin{equation}\label{eq:zeta_q}
\eta(\zeta(q))=\eta'(q)\quad\text{and}\quad \zeta\circ(\eta')^{-1}=\eta^{-1}.
\end{equation}
Hence
\[
\zeta(\psi(\pi)(q))
= \zeta \left((\eta')^{-1} \big(\pi(\eta'(q))\big)\right)
= \zeta \left((\eta')^{-1} \big(\pi(\eta(\zeta(q)))\big)\right)
= \eta^{-1} \big(\pi(\eta(\zeta(q)))\big),
\]
where the first equality uses \eqref{eq:sigma_q}, the second uses the first identity in \eqref{eq:zeta_q}, and the third uses the second identity in \eqref{eq:zeta_q}. Therefore $F_{\pi}[\zeta(q)] = F\left[\eta^{-1} \big(\pi(\eta(\zeta(q)))\big)\right] = F[\zeta(\psi(\pi)(q))]$ for all $q\in Q_{\mathrm{M}}$, completing the proof of \eqref{eq:filling-transport}.
  \end{proof}

\begin{lemma}\label{lemma:garnir_determinant_relation}
  Fix a filling $F$ of shape $\lambda / \mu$. Choose $c_1, c_2 \in [\lambda_1]$ such that $c_1 < c_2$ and $a, b \in \mathbb{N}$ such that $a \leq \ell'_{c_1}$ and $b \leq \ell'_{c_2}$. If $\lambda_{c_1}'-a < \mu_{c_2}'+b$, then
\[
  \sum_{\pi\in \mathfrak{S}_{a+b}^{a, b}}\sign{\pi}D_{F_{\pi}, c_1}D_{F_{\pi}, c_2} = 0.
\]
\end{lemma}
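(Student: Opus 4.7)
The plan is to compute $\det(K_F^{c_1,c_2,a,b})$ in two ways: Lemma \ref{lemma:singularity} shows it is zero, while a careful Leibniz expansion, coupled with the structural lemmas already established, reveals it to be (a multiple of) the signed sum we want to vanish.

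First, by Lemma \ref{lemma:singularity}, the hypothesis $\lambda'_{c_1}-a < \mu'_{c_2}+b$ yields $\det(K_F^{c_1,c_2,a,b}) = 0$. Writing $K := K_F^{c_1,c_2,a,b}$ and $N := \ell'_{c_1} + \ell'_{c_2}$, the Leibniz formula gives
\[
0 = \det(K) = \sum_{\sigma \in \mathfrak{S}_N} \sign{\sigma} \prod_{i=1}^N K[i,\sigma(i)].
\]
By Lemma \ref{lemma:zero}, any $\sigma \notin S = \overline{\mathfrak{S}_{a+b}^{a,b}} \cdot \varphi(\mathfrak{S}_{\ell'_{c_1}} \times \mathfrak{S}_{\ell'_{c_2}})$ contributes a zero product (some $K[i,\sigma(i)] = 0$), so the sum restricts to $\sigma \in S$.

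Next, I would apply Lemma \ref{lemma:bijection} to reparametrize: each $\sigma \in S$ has a unique factorization $\sigma = \psi(\pi)\,\varphi(\sigma_1,\sigma_2)$ with $\pi \in \mathfrak{S}_{a+b}^{a,b}$, $\sigma_1 \in \mathfrak{S}_{\ell'_{c_1}}$, and $\sigma_2 \in \mathfrak{S}_{\ell'_{c_2}}$. Since $\psi$ and $\varphi$ are sign-preserving, $\sign{\sigma} = \sign{\pi}\sign{\sigma_1}\sign{\sigma_2}$. Substituting and using Lemma \ref{lemma:matrix_iden} in the form $K_F[i,\psi(\pi)(q)] = K_{F_\pi}[i,q]$ with $q = \varphi(\sigma_1,\sigma_2)(i)$, the expansion becomes
\[
0 = \sum_{\pi \in \mathfrak{S}_{a+b}^{a,b}} \sign{\pi} \sum_{(\sigma_1,\sigma_2)} \sign{\sigma_1}\sign{\sigma_2} \prod_{i=1}^N K_{F_\pi}[i,\varphi(\sigma_1,\sigma_2)(i)].
\]

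Finally, I would decompose the inner product. Because $\varphi(\sigma_1,\sigma_2)$ preserves the blocks $\{1,\dots,\ell'_{c_1}\}$ and $\{\ell'_{c_1}+1,\dots,N\}$, the index $\sigma(i)$ lands in the same block as $i$, so the relevant entry of $K_{F_\pi}$ lies in the on-diagonal block $M_{F_\pi,c_1}$ or $M_{F_\pi,c_2}$, never in the off-diagonal blocks. Hence the product splits as
\[
\prod_{i=1}^{\ell'_{c_1}} M_{F_\pi,c_1}[i,\sigma_1(i)] \cdot \prod_{i=1}^{\ell'_{c_2}} M_{F_\pi,c_2}[i,\sigma_2(i)],
\]
and summing $\sign{\sigma_1}\sign{\sigma_2}$ against this product gives $D_{F_\pi,c_1}\,D_{F_\pi,c_2}$ by the Leibniz formula applied to each factor. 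The identity $0 = \sum_{\pi \in \mathfrak{S}_{a+b}^{a,b}} \sign{\pi}\,D_{F_\pi,c_1} D_{F_\pi,c_2}$ follows.

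The main bookkeeping obstacle is ensuring the sign and index tracking is consistent across the reparametrization, in particular verifying that $\varphi(\sigma_1,\sigma_2)$-twisted entries of $K_{F_\pi}$ really do fall inside the on-diagonal blocks so the product cleanly factors; once Lemmas \ref{lemma:zero}, \ref{lemma:bijection}, and \ref{lemma:matrix_iden} are lined up, the calculation is essentially mechanical.
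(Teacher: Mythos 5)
Your proposal is correct and follows essentially the same route as the paper: both arguments rest on Lemma~\ref{lemma:singularity} (vanishing of $\det K$), Lemma~\ref{lemma:zero} (restriction of the Leibniz sum to $S$), Lemma~\ref{lemma:bijection} (unique factorization $\sigma=\psi(\pi)\varphi(\sigma_1,\sigma_2)$ with signs multiplying), and Lemma~\ref{lemma:matrix_iden} (transport between $K_F$ and $K_{F_\pi}$). The only difference is cosmetic: you start from $\det(K)=0$ and reparametrize forward, whereas the paper expands $\sum_\pi \operatorname{sgn}(\pi)D_{F_\pi,c_1}D_{F_\pi,c_2}$ and folds it back into the restricted Leibniz sum, i.e.\ the same chain of equalities read in the opposite direction.
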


\begin{proof}
Let $K=K_{F}^{c_1, c_2,a,b}$ and for $\pi\in \mathfrak{S}_{a+b}^{a, b}$ let $K_{F_{\pi}}=K_{F_{\pi}}^{c_1, c_2,a,b}$. Let $S$ be defined as in Lemma \ref{lemma:bijection}. Then $\lambda_{c_1}'-a < \mu_{c_2}'+b$ implies, by Lemma \ref{lemma:singularity}, that $K$ is singular. Hence,
\begin{align*}
0 = \det(K) &= \sum_{\pi\in \mathfrak{S}_{\ell'_{c_1}+\ell'_{c_2}}}\text{sgn}(\pi)\prod_{i=1}^{\ell'_{c_1}+\ell'_{c_2}} K[i, \pi(i)] \\
& = \sum_{\sigma\in S}\text{sgn}(\sigma)\prod_{i=1}^{\ell'_{c_1}+\ell'_{c_2}} K[i, \sigma(i)]  +  \sum_{\sigma\in \mathfrak{S}_{\ell'_{c_1}+\ell'_{c_2}}\setminus S}\text{sgn}(\sigma)\prod_{i=1}^{\ell'_{c_1}+\ell'_{c_2}} K[i, \sigma(i)].
\end{align*}
By Lemma \ref{lemma:zero}, if $\sigma\in \mathfrak{S}_{\ell'_{c_1}+\ell'_{c_2}}\setminus S$, then there exists some $i$ such that $K[i, \sigma(i)] = 0$. Thus, every term in the summation indexed by $\mathfrak{S}_{\ell'_{c_1}+\ell'_{c_2}}\setminus S$ is $0$.

Accordingly, we have
\begin{equation} \label{eq:singKImplication}
  0 = \det(K) = \sum_{\sigma\in S}\text{sgn}(\sigma)\prod_{i=1}^{\ell'_{c_1}+\ell'_{c_2}} K[i, \sigma(i)].
\end{equation}
Similarly, for $\pi\in \mathfrak{S}_{a+b}^{a, b}$ we have via the Leibniz expansion of the determinant that
\[
D_{F_{\pi}, c_1} = \sum_{\sigma_1\in \mathfrak{S}_{\ell'_{c_1}}}\sign{\sigma_1}\prod_{i=1}^{\ell'_{c_1}} M_{F_{\pi}, c_1}[i, \sigma_1(i)],\quad
D_{F_{\pi}, c_2} = \sum_{\sigma_2\in \mathfrak{S}_{\ell'_{c_2}}}\sign{\sigma_2}\prod_{i=1}^{\ell'_{c_2}} M_{F_{\pi}, c_2}[i, \sigma_2(i)].
\]
Given that $(\sigma_1, \sigma_2)\in \mathfrak{S}_{\ell'_{c_1}}\times \mathfrak{S}_{\ell'_{c_2}}$, we let
$\overline{\sigma}:=\varphi((\sigma_1, \sigma_2))$.
Then,
\begin{align*}
 & \sum_{\pi\in \mathfrak{S}_{a+b}^{a, b}}\sign{\pi}D_{F_{\pi}, c_1}D_{F_{\pi}, c_2}  \\
&= \sum_{\pi\in \mathfrak{S}_{a+b}^{a, b}}\text{sgn}(\pi)\left(\sum_{\sigma_1\in \mathfrak{S}_{\ell'_{c_1}}}\sign{\sigma_1}\prod_{i=1}^{\ell'_{c_1}} (M_{F_{\pi}, c_1})[i, \sigma_1(i)]\right)\left( \sum_{\sigma_2\in \mathfrak{S}_{\ell'_{c_2}}}\sign{\sigma_2}\prod_{i=1}^{\ell'_{c_2}} (M_{F_{\pi}, c_2})[i, \sigma_2(i)]\right) \\
& = \sum_{\pi\in \mathfrak{S}_{a+b}^{a, b}}\text{sgn}(\pi)\left(\sum_{(\sigma_1, \sigma_2)\in \mathfrak{S}_{\ell'_{c_1}}\times \mathfrak{S}_{\ell'_{c_2}}}\sign{(\sigma_1, \sigma_2)}\left(\prod_{i=1}^{\ell'_{c_1}} M_{F_{\pi}, c_1}[i, \sigma_1(i)]\right)\left(\prod_{i=1}^{\ell'_{c_2}} M_{F_{\pi}, c_2}[i, \sigma_2(i)]\right)\right)\\
&= \sum_{\pi \in \mathfrak{S}_{a+b}^{\,a,b}}
   \sign{\pi}
   \left(
     \sum_{\overline{\sigma}\in
           \varphi\!\left(\mathfrak{S}_{\ell'_{c_1}}
           \times \mathfrak{S}_{\ell'_{c_2}}\right)}
     \sign{\overline{\sigma}}
     \left(
       \prod_{i=1}^{\ell'_{c_1}}
         K_{F_{\pi}}[i,\overline{\sigma}(i)]
     \right)
     \left(
       \prod_{i=\ell'_{c_1}+1}^{\ell'_{c_1}+\ell'_{c_2}}
         K_{F_{\pi}}[i,\overline{\sigma}(i)]
     \right)
   \right).
\end{align*}
For any $\pi\in \mathfrak{S}_{a+b}^{a, b}$, we have the relationship
$K_{F_{\pi}}[p, q] = K[p, \psi(\pi)(q)]$ by Lemma \ref{lemma:matrix_iden}. Hence,
\[
  \left(
    \prod_{i=1}^{\ell'_{c_1}}
      K_{F_{\pi}}[i,\overline{\sigma}(i)]
  \right)
  \left(
    \prod_{i=\ell'_{c_1}+1}^{\ell'_{c_1}+\ell'_{c_2}}
      K_{F_{\pi}}[i,\overline{\sigma}(i)]
  \right)=
  \prod_{i=1}^{\ell'_{c_1}+\ell'_{c_2}} K[\,i,(\psi(\pi)\!\cdot\!\overline{\sigma})(i)\,].
\]
Using this identity we can continue our above equation yielding
\begin{align*}
\sum_{\pi\in \mathfrak{S}_{a+b}^{a, b}}\sign{\pi}D_{F_{\pi}, c_1}D_{F_{\pi}, c_2} &= \sum_{\pi \in \mathfrak{S}_{a+b}^{\,a,b}}
   \sign{\pi}
   \left(
     \sum_{\overline{\sigma}\in
     \varphi\!\left(\mathfrak{S}_{\ell'_{c_1}}
     \times \mathfrak{S}_{\ell'_{c_2}}\right)}
     \sign{\overline{\sigma}}
     \prod_{i=1}^{\ell'_{c_1}+\ell'_{c_2}}
       K\!
       [\,i,(\psi(\pi)\!\cdot\!\overline{\sigma})(i)\,]
   \right) \\[4pt]
&= \sum_{\pi \in \mathfrak{S}_{a+b}^{\,a,b}}
   \left(
     \sum_{\overline{\sigma}\in
           \varphi\!\left(\mathfrak{S}_{\ell'_{c_1}}
           \times \mathfrak{S}_{\ell'_{c_2}}\right)}
     \operatorname{sgn}\!\left(\psi(\pi)\!\cdot\!\overline{\sigma}\right)
     \prod_{i=1}^{\ell'_{c_1}+\ell'_{c_2}}
       K\!
       [\,i,(\psi(\pi)\!\cdot\!\overline{\sigma})(i)\,]
   \right)\\
&=   \sum_{\sigma\in S}
     \sign{\sigma}
     \prod_{i=1}^{\ell'_{c_1}+\ell'_{c_2}}
       K[\,i,\sigma(i)\,] =0.
\end{align*}
where the second equality follows from the fact that $\psi$ is a sign-preserving homomorphism and $\sign{}$ is a group homomorphism, the third equality follows from the sign-preserving bijection established in Lemma \ref{lemma:bijection}, and the final equality from \eqref{eq:singKImplication}.
\end{proof}

\begin{corollary}\label{cor:garnir_determinant_relation}
  Fix a filling $F$ of shape $\lambda / \mu$. Choose $c_1, c_2 \in [\lambda_1]$ such that $c_1 < c_2$ and $a, b \in \mathbb{N}$ such that $a \leq \ell'_{c_1}$ and $b \leq \ell'_{c_2}$. If $\lambda_{c_1}'-a < \mu_{c_2}'+b$, then
  \[
    \sum_{\pi\in \mathfrak{S}_{a+b}^{a, b}}\sign{\pi}D_{F_{\pi}} =0.
  \]
\end{corollary}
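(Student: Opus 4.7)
The plan is to reduce this corollary directly to Lemma \ref{lemma:garnir_determinant_relation} by factoring the product $D_{F_\pi} = \prod_{c=1}^{\lambda_1} D_{F_\pi, c}$ and extracting the two columns on which the Garnir action operates. The essential observation is that the action $F \mapsto F_\pi$ defined in Definition~\ref{def:permutation_filling} modifies entries only at cells in $\Gar_{c_1,c_2}^{a,b} \subset \mathrm{Col}_{c_1}(\lambda/\mu) \cup \mathrm{Col}_{c_2}(\lambda/\mu)$. In particular, for every column $c \notin \{c_1, c_2\}$, the restriction of $F_\pi$ to $\mathrm{Col}_c(\lambda/\mu)$ coincides with the restriction of $F$. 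Consequently, the matrix $M_{F_\pi, c}$ equals $M_{F, c}$ for all $c \notin \{c_1, c_2\}$, and therefore $D_{F_\pi, c} = D_{F, c}$ is independent of $\pi \in \mathfrak{S}_{a+b}^{a,b}$.

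Given this invariance, I would write
\[
D_{F_\pi} = \left(\prod_{c \notin \{c_1, c_2\}} D_{F_\pi, c}\right) D_{F_\pi, c_1} D_{F_\pi, c_2} = \left(\prod_{c \notin \{c_1, c_2\}} D_{F, c}\right) D_{F_\pi, c_1} D_{F_\pi, c_2},
\]
so that the factor in parentheses does not depend on $\pi$ and can be pulled outside the signed sum. Applying this to the target identity gives
\[
\sum_{\pi \in \mathfrak{S}_{a+b}^{a,b}} \sign{\pi}\, D_{F_\pi} = \left(\prod_{c \notin \{c_1, c_2\}} D_{F, c}\right) \sum_{\pi \in \mathfrak{S}_{a+b}^{a,b}} \sign{\pi}\, D_{F_\pi, c_1} D_{F_\pi, c_2}.
\]
The hypothesis $\lambda'_{c_1} - a < \mu'_{c_2} + b$ is precisely the condition required to invoke Lemma~\ref{lemma:garnir_determinant_relation}, which forces the inner sum to vanish. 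Multiplying by the constant factor yields $0$, completing the argument.

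There is essentially no obstacle here; the entire content of the corollary is packaged in Lemma~\ref{lemma:garnir_determinant_relation}, and the only task is to verify carefully that the Garnir action leaves the untouched columns literally untouched, which is immediate from the piecewise definition of $F_\pi$. The one point worth stating explicitly in the write-up is that the sets $\Gar^{a,\_}_{c_1,\_}$ and $\Gar^{\_,b}_{\_,c_2}$ lie entirely within columns $c_1$ and $c_2$, respectively, so no cell of any other column is ever permuted.
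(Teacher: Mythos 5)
Your argument is correct and is essentially identical to the paper's proof: both factor $D_{F_\pi}=\prod_c D_{F_\pi,c}$, observe that the shuffle action fixes every column outside $\{c_1,c_2\}$ so those determinant factors are constant in $\pi$ and can be pulled out of the signed sum, and then apply Lemma~\ref{lemma:garnir_determinant_relation} to kill the remaining two-column sum. No gaps.
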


\begin{proof}
By definition, $D_{F_{\pi}} = \prod_{c} D_{F_{\pi}, c}$. The action of the shuffle $\pi \in \mathfrak{S}_{a+b}^{a, b}$ is, by construction, restricted to modifying the entries of the filling $F$ within columns $c_1$ and $c_2$. For any column $c \notin \{c_1, c_2\}$, the filling $F_{\pi}$ is identical to $F$ in that column, which implies that the corresponding determinant factors are unchanged, yielding $D_{F_{\pi}, c} = D_{F, c}$.

Thus
\begin{align*}
\sum_{\pi\in \mathfrak{S}_{a+b}^{a, b}}\sign{\pi}D_{F_{\pi}} &= \sum_{\pi\in \mathfrak{S}_{a+b}^{a, b}}\sign{\pi}\left(\prod_{c} D_{F_{\pi}, c}\right) \\
&= \left(\prod_{c \neq c_1, c_2}D_{F, c}\right) \left(\sum_{\pi\in \mathfrak{S}_{a+b}^{a, b}}\sign{\pi} D_{F_{\pi}, c_1}D_{F_{\pi}, c_2}\right).
\end{align*}
The second factor equals $0$ by Lemma~\ref{lemma:garnir_determinant_relation}, yielding our desired identity.
\end{proof}

\subsection{The key homomorphism and a basis for the skew Schur module}
Fix a skew partition $\lambda/\mu$, and fix an ordered basis $(e_1,\ldots,e_m)$ of the free $R$-module $E$, so all filling entries lie in $[m]=\{1,\ldots,m\}$. Let $Z_{i,j}$ for $1 \leq i \leq \lambda'_1$ and $1 \leq j \leq m$ be a set of indeterminates, and $R[Z]$ the polynomial ring over $R$ in these indeterminates. In this subsection we construct an $R$-linear homomorphism $\Phi:E^{\lambda/\mu}\to R[Z]$ by sending a filling $F$ to the determinantal polynomial $D_F$. The determinantal Garnir identities proved above ensure that $\Phi$ descends to the skew Schur module. As a first application, we use $\Phi$ to prove linear independence of the family \[\{\bare{T} := e_T\!+\!Q \in M_R/Q:\,T\in \mathrm{SSYT}(\lambda/\mu,z)\text{ for some content }z\},\] so that semistandard tableaux index a basis. The remainder of the subsection shows that these vectors span the module. This homomorphism $\Phi$ will also be crucial to the proof of our non-iterative straightening method.

\begin{proposition}\label{prop:lemma3inFulton}
  Fix a skew partition $\lambda/\mu$. Let $E$ be a free $R$-module of rank $m$. There exists a unique $R$-module homomorphism $\Psi\!\!:\!\!E^{\lambda/\mu}\!\to\!R[Z]$ which, under the identification of $E^{\lambda/\mu}$ with $M_R/Q$, maps $\bare{F} := e_F\!+\!Q$ to $D_F$.
\end{proposition}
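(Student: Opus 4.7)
The plan is to construct $\Psi$ by first building a map from the free module $M_R$ and then checking it descends to the quotient $M_R/Q \cong E^{\lambda/\mu}$. Since $\{e_F : F \text{ a filling of } \lambda/\mu\}$ is a free $R$-basis of $M_R$, the assignment $e_F \mapsto D_F \in R[Z]$ extends uniquely to an $R$-linear map $\widetilde\Psi: M_R \to R[Z]$. To obtain the desired $\Psi$, it suffices to verify that $\widetilde\Psi(Q) = 0$, and uniqueness is automatic because any candidate homomorphism is determined by its values on the generating set $\{\bare{F}\}$.

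The verification that $\widetilde\Psi$ kills $Q$ proceeds by checking the three families of generators listed in Lemma~\ref{lem:lemma1}. For generators of type (i), if $F$ has two equal entries in column $c$, then the matrix $M_{F,c}$ defined in \eqref{def:M_F_c} has two identical columns (since its columns are indexed by the entries $F[\row_{c,j},c]$), so $D_{F,c}=0$ and hence $D_F=\prod_c D_{F,c}=0$. For generators of type (ii), swapping two entries within column $c$ swaps the corresponding two columns of $M_{F,c}$, which negates the determinant: $D_{F',c} = -D_{F,c}$ and the factors $D_{F',c'} = D_{F,c'}$ for all $c' \neq c$ are unchanged, giving $D_F + D_{F'} = 0$.

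For generators of type (iii), the required identity
\[
\sum_{\pi \in \mathfrak{S}_{a+b}^{a,b}} \sign{\pi}\, D_{F_\pi} \;=\; 0
\]
for every $\lambda/\mu$-admissible tuple $(a,b,c_1,c_2)$ is exactly the content of Corollary~\ref{cor:garnir_determinant_relation}. Applying $\widetilde\Psi$ to the generator $\sum_\pi \sign{\pi}\, e_{F_\pi}$ yields precisely this sum, which vanishes. Consequently $\widetilde\Psi(Q)=0$, so $\widetilde\Psi$ factors through the quotient to give a well-defined $R$-linear map $\Psi: M_R/Q \to R[Z]$ with $\Psi(\bare{F}) = D_F$.

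The main work has already been done in the preceding subsection: the genuinely nontrivial step would have been verifying compatibility with the Garnir relations, but Corollary~\ref{cor:garnir_determinant_relation} reduces this to a single citation. The remaining content of the proof is therefore a routine assembly: invoke the universal property of the free module $M_R$ to define $\widetilde\Psi$, then use the three cases above to confirm descent to the quotient, and conclude uniqueness from the fact that $\Psi$ is prescribed on a generating set.
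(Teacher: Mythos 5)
Your proposal is correct and follows essentially the same route as the paper: define the map on the free module $M_R$ by $e_F\mapsto D_F$, check that the three families of generators of $Q$ are annihilated (with the Garnir case supplied by Corollary~\ref{cor:garnir_determinant_relation}), and then descend to the quotient, with uniqueness coming from the fact that the cosets $\bare{F}$ generate. No gaps.
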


\begin{proof}
  Recall that $M_R$ is the free $R$-module with basis $\{e_F \mid F \text{ is a filling of } \lambda/\mu\}$. We define a map $\Phi: M_R \to R[Z]$ by specifying that
  \[
    \Phi(e_F) = D_F,
  \]
  and extending linearly to all of $M_R$.  This map is by construction an $R$-module homomorphism. To show that $\Phi$ descends to a well-defined homomorphism on the quotient $M_R/Q$, we must show that $\Phi$ maps each of the three types of generators of the submodule $Q$ to zero.

  \noindent \textbf{Case 1:} generator $e_F$ where $F$ is a filling that has two identical entries in the same column $c$ of $F$. The case hypothesis implies that two columns of the matrix $M_{F,c}$ are identical. The determinant of a matrix with two identical columns is zero over any base ring $R$, so $D_{F,c} = \det(M_{F,c}) = 0$. Consequently, $\Phi(e_F) = D_F = \prod_{c=1}^{\lambda_1} D_{F,c} = 0$.

  \noindent \textbf{Case 2:}  generator $e_F + e_{F'}$ where $F$ is a filling and $F'$ is obtained from $F$ by interchanging two entries in some column $c$ of $F$. The corresponding matrix $M_{F',c}$ is obtained from $M_{F,c}$ by swapping two columns. This changes the sign of the determinant, so $D_{F',c} = -D_{F,c}$. For any other column $j \neq c$, $D_{F',j} = D_{F,j}$. Therefore, $D_{F'} = -D_F$ and $\Phi(e_F + e_{F'}) = \Phi(e_F) + \Phi(e_{F'}) = D_F + D_{F'} = 0$.

  \noindent \textbf{Case 3:}  generator $\sum_{\pi\in \mathfrak{S}_{a+b}^{a,b}} \sign{\pi} e_{F_{\pi}}$, where $F$ is a filling and parameters $(c_1, c_2, a, b)$ are $\lambda/\mu$-admissible. Applying $\Phi$ to this generator gives
    \[
      \Phi\left(\sum_{\pi \in \mathfrak{S}_{a+b}^{a,b}} \sign{\pi} e_{F_{\pi}}^{c_1, c_2}\right) = \sum_{\pi \in \mathfrak{S}_{a+b}^{a,b}} \sign{\pi} \Phi(e_{F_{\pi}}) = \sum_{\pi \in \mathfrak{S}_{a+b}^{a,b}} \sign{\pi} D_{F_{\pi}} = 0,
    \]
    where the final equality is Corollary~\ref{cor:garnir_determinant_relation}.

  Since $\Phi$ annihilates all generators of $Q$, we have $Q \subseteq \ker(\Phi)$. By the universal property of quotient modules, the map $\Phi$ descends to a unique $R$-module homomorphism $\Psi: E^{\lambda/\mu} \to R[Z]$ that maps $\bare{F}$ to $D_F$.
\end{proof}

Proposition~\ref{prop:lemma3inFulton} is now used to construct a basis for the skew Schur module.

\begin{definition}\label{def:lexicographical_ordering_on_monomials}
We establish a total order on the set of indeterminates $\{Z_{i,j}\}$ by defining $Z_{i,j} < Z_{i',j'}$ if the index pair $(i,j)$ precedes $(i',j')$ in lexicographical order. This variable ordering induces the lexicographical order, which is a total order, on the set of all monomials in the polynomial ring $R[Z]$. To compare any two distinct monomials $M_1$ and $M_2$, we identify the smallest indeterminate $Z_{i_0,j_0}$ in the established variable ordering for which the exponent in $M_1$ differs from its exponent in $M_2$. We then define $M_1 < M_2$ if the exponent of $Z_{i_0,j_0}$ in $M_1$ is less than its exponent in $M_2$.
\end{definition}

\begin{example}
  Consider monomials $M_1 = Z_{1,2}^3 = Z_{1,1}^0 Z_{1,2}^3$ and $M_2 = Z_{1,1} = Z_{1,1}^1 Z_{1,2}^0$ in $R[Z]$ with indeterminates $Z_{1,1}, Z_{1,2}$. The exponent of the smallest variable $Z_{1,1}$ in $M_1$ is $0$, while the exponent of $Z_{1,1}$ in $M_2$ is $1$. These exponents differ and $0 < 1$, so $M_1 < M_2$.
\end{example}

\begin{lemma}\label{lem:property_of_monomial_ordering}
  Suppose $M_1, M_2, N_1, N_2$ are monomials satisfying $M_1 < M_2$ and $N_1 \leq N_2$. Then their products satisfy $M_1N_1 < M_2N_2$.
  \end{lemma}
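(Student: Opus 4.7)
The plan is to reduce the comparison of products to the defining comparison on each factor by tracking exponents variable by variable. Writing $\exp_Z(A)$ for the exponent of a variable $Z$ in a monomial $A$, the hypothesis $M_1 < M_2$ gives a smallest variable $Z_{i_0,j_0}$ (in the variable ordering on $\{Z_{i,j}\}$) with $\exp_{Z_{i_0,j_0}}(M_1) < \exp_{Z_{i_0,j_0}}(M_2)$, and with $\exp_Z(M_1) = \exp_Z(M_2)$ for every $Z < Z_{i_0,j_0}$. Similarly, if $N_1 < N_2$, there is a smallest variable $Z_{i_1,j_1}$ with the analogous strict inequality there and agreement on all strictly smaller variables. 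The key arithmetic fact I will use repeatedly is that $\exp_Z(AB) = \exp_Z(A) + \exp_Z(B)$ for any monomials $A,B$.

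First I would dispatch the easy case $N_1 = N_2$. Then for every variable $Z$, $\exp_Z(M_1 N_1) - \exp_Z(M_2 N_2) = \exp_Z(M_1) - \exp_Z(M_2)$, so the smallest variable where the two products disagree is exactly $Z_{i_0,j_0}$, and the inequality at that variable transfers to the products, giving $M_1 N_1 < M_2 N_2$.

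For the case $N_1 < N_2$, I would set $Z^* := \min(Z_{i_0,j_0}, Z_{i_1,j_1})$ in the variable ordering. By minimality of $Z_{i_0,j_0}$ and $Z_{i_1,j_1}$, every $Z < Z^*$ satisfies $\exp_Z(M_1) = \exp_Z(M_2)$ and $\exp_Z(N_1) = \exp_Z(N_2)$, hence $\exp_Z(M_1 N_1) = \exp_Z(M_2 N_2)$. At $Z^*$ itself I would split into three subcases. If $Z^* = Z_{i_0,j_0} < Z_{i_1,j_1}$, then $\exp_{Z^*}(N_1) = \exp_{Z^*}(N_2)$ while $\exp_{Z^*}(M_1) < \exp_{Z^*}(M_2)$, so summing yields $\exp_{Z^*}(M_1 N_1) < \exp_{Z^*}(M_2 N_2)$. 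The symmetric subcase $Z^* = Z_{i_1,j_1} < Z_{i_0,j_0}$ is handled identically. If $Z^* = Z_{i_0,j_0} = Z_{i_1,j_1}$, both strict inequalities hold at $Z^*$ and their sum is again strict.

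In every case $Z^*$ is the smallest variable at which $M_1 N_1$ and $M_2 N_2$ disagree and the inequality goes the right way, so $M_1 N_1 < M_2 N_2$. There is no real obstacle; the only care required is to verify that no variable strictly below $Z^*$ can distinguish the two products, which is immediate from the two minimality statements, together with ensuring the case $N_1 = N_2$ is treated separately so that the definition of $Z_{i_1,j_1}$ is not invoked vacuously.
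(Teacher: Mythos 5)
Your proof is correct, but it takes a different route from the paper. The paper's proof is citation-based: it notes that the lexicographic order of Definition~\ref{def:lexicographical_ordering_on_monomials} is a monomial order (citing Cox--Little--O'Shea), invokes the monomial-order axiom that $A<B$ implies $AC<BC$, applies it to get $M_1N_1 < M_2N_1$ and $M_2N_1 \le M_2N_2$, and concludes by transitivity. You instead verify the statement directly from the definition by an exponent-vector analysis: after disposing of the case $N_1=N_2$, you locate the minimal variable $Z^*$ at which the factors can first disagree, check that all strictly smaller variables give equal exponents in the two products, and show the exponent inequality at $Z^*$ is strict in each subcase, so $Z^*$ is the first differing variable of the products and the inequality transfers. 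In effect you reprove the multiplicativity axiom of the lex order from scratch rather than citing it; your argument is self-contained and makes no appeal to the general theory of monomial orders, at the cost of a three-way case split that the paper's two-step factorization $M_1N_1 < M_2N_1 \le M_2N_2$ avoids entirely. Both arguments are complete and correct; the paper's is shorter, yours is more elementary.
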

  \begin{proof} The lexicographical order as defined in Definition~\ref{def:lexicographical_ordering_on_monomials} is a monomial order (see, e.g., \cite[Chapter~2, \S~2, Proposition 4]{CoxLittleOshea}). One of the defining axioms of a monomial order is that for monomials $A, B,$ and $C$, if $A < B$, then $AC < BC$.

Given the hypotheses $M_1 < M_2$ and $N_1 \le N_2$, we apply this axiom to obtain the inequalities $M_1 N_1 < M_2 N_1$ and $M_2 N_1 \le M_2 N_2$. The desired result follows by transitivity.
  \end{proof}

\begin{lemma}\label{lem:largest_monomial_determinant}
  Let $T$ be a SSYT with shape $\lambda/\mu$ and let $c \in [\lambda_1]$. The largest monomial in $D_{T,c}$ is given by the product of the diagonal entries of $M_{T,c}$, namely
  \[
  \prod_{i=1}^{\ell_c'} M_{T,c}[i,i].
  \]
  \end{lemma}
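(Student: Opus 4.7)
The plan is to expand $D_{T,c}=\det(M_{T,c})$ via the Leibniz formula, identify the diagonal term with the identity permutation, and show that it strictly dominates every other term under the monomial order of Definition~\ref{def:lexicographical_ordering_on_monomials}. Throughout, set $r_i:=\row_{c,i}$ and $t_i:=T[r_i,c]$ for $1\le i\le \ell'_c$. Since $\row_c=(\mu'_c+1,\ldots,\lambda'_c)$, the first indices satisfy $r_1<r_2<\cdots<r_{\ell'_c}$ and are consecutive integers, and because $T$ is an SSYT the column entries are strictly increasing, giving $t_1<t_2<\cdots<t_{\ell'_c}$.

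From $M_{T,c}[i,j]=Z_{r_i,\,t_j}$, the Leibniz expansion reads
\[
D_{T,c}\;=\;\sum_{\sigma\in\mathfrak{S}_{\ell'_c}} \sign{\sigma}\,P_\sigma, \qquad P_\sigma:=\prod_{i=1}^{\ell'_c} Z_{r_i,\,t_{\sigma(i)}}.
\]
The diagonal monomial is $P_{\mathrm{id}}=\prod_i Z_{r_i,t_i}$. Because the $r_i$ are distinct, the first index of each linear factor of $P_\sigma$ uniquely determines the factor, so the multiset of factors $\{Z_{r_i,t_{\sigma(i)}}\}_i$ determines $\sigma$ (using that the $t_j$ are also distinct). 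Thus distinct permutations produce distinct monomials, so no cancellation can occur and it suffices to show that $P_\sigma<P_{\mathrm{id}}$ for every $\sigma\neq\mathrm{id}$.

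Fix $\sigma\neq\mathrm{id}$ and let $k$ be the least index with $\sigma(k)\neq k$. Then $\sigma(i)=i$ for $i<k$ and $\sigma(k)>k$, whence $t_{\sigma(k)}>t_k$. I would then compare the exponents of the variables $Z_{i,j}$ in $P_{\mathrm{id}}$ and $P_\sigma$, walking through the variable order lexicographically: for any variable $Z_{i,j}$ with $i<r_k$, either $i\notin\{r_1,\ldots,r_{\ell'_c}\}$ (exponent $0$ in both) or $i=r_\ell$ for some $\ell<k$, in which case the $\ell$-th factor of both monomials is $Z_{r_\ell,t_\ell}$ (since $\sigma(\ell)=\ell$), so the exponent of $Z_{i,j}$ is $1$ in both when $j=t_\ell$ and $0$ otherwise. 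Among variables with first index equal to $r_k$, the factor contributing such a variable to $P_{\mathrm{id}}$ is $Z_{r_k,t_k}$, while the one in $P_\sigma$ is $Z_{r_k,t_{\sigma(k)}}$ with $t_{\sigma(k)}>t_k$. Hence no variable $Z_{r_k,j}$ with $j<t_k$ appears in either monomial, while $Z_{r_k,t_k}$ has exponent $1$ in $P_{\mathrm{id}}$ and $0$ in $P_\sigma$. Therefore $Z_{r_k,t_k}$ is the smallest variable at which the exponents differ, and $P_\sigma<P_{\mathrm{id}}$ by Definition~\ref{def:lexicographical_ordering_on_monomials}.

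Since every non-identity permutation yields a strictly smaller monomial and all monomials in the Leibniz expansion are pairwise distinct, the leading monomial of $D_{T,c}$ is $P_{\mathrm{id}}=\prod_{i=1}^{\ell'_c} M_{T,c}[i,i]$, as claimed. The only step requiring care is the bookkeeping in the second paragraph: verifying that no smaller variable than $Z_{r_k,t_k}$ appears with differing exponent. The consecutiveness of the rows $r_1,\ldots,r_{\ell'_c}$ and the strict increase of the $t_j$ (from SSYT) make this routine, so I do not anticipate any genuine obstacle.
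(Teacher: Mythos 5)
Your proposal is correct and follows essentially the same route as the paper: expand $D_{T,c}$ by the Leibniz formula, locate the first index $k$ where $\sigma$ is nontrivial, and use the strict increase of the column entries of the SSYT (together with the distinct, increasing row indices) to conclude that $Z_{r_k,t_k}$ is the smallest variable with differing exponent, so $P_\sigma < P_{\mathrm{id}}$. Your added remarks (distinct permutations give distinct monomials, hence no cancellation; $\sigma(k)>k$ argued directly rather than by contradiction) are minor variations that do not change the argument.
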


  \begin{proof}
  Given a semistandard Young tableaux $T$ and $c\in [\lambda_1]$, we have
  \[
  D_{T,c} = \det(M_{T,c}) = \sum_{\sigma \in \mathfrak{S}_{\ell_c'}} \prod_{i=1}^{\ell_c'} M_{T,c}[i,\sigma(i)].
  \]
  Consider the monomial
  \[
  \prod_{i=1}^{\ell_c'} M_{T,c}[i,i] = Z_{\mu_c'+1,a_1} \cdots Z_{\lambda_c',a_{\ell_c'}}.
  \]
  The indeterminates in this product are strictly ordered, $Z_{\mu_c'+1,a_1} < Z_{\mu_c'+2,a_2} < \dots < Z_{\lambda_c',a_{\ell_c'}}$, since the first indices, $\mu_c'+1, \mu_c'+2, \dots, \lambda_c'$, are strictly increasing. The semi-standard property of $T$ implies that its entries increase strictly down each column, and so $a_1 < a_2 < \dots < a_{\ell_c'}$.

  Let $M$ be any monomial in $D_{T,c}$ different from $\prod_{i=1}^{\ell_c'} M_{T,c}[i,i]$. Then there exists $\sigma \in \mathfrak{S}_{\ell_c'}$ such that
  \[
  M = Z_{\mu_c'+1,b_1} \cdots Z_{\lambda_c',b_{\ell_c'}},
  \]
  where $b_i = a_{\sigma(i)}$. Because $M \neq \prod_{i=1}^{\ell_c'} M_{T,c}[i,i]$, there is a smallest index $k$ with $b_k \neq a_k$.

  Let $k$ be the smallest index with $b_k \neq a_k$. Suppose, for contradiction, that $b_k < a_k$. Then $\sigma(k) < k$. For every $j < k$ we have $b_j = a_j$ by minimality, hence $\sigma(j) = j$. In particular, $\sigma(\sigma(k)) = \sigma(k)$ with $\sigma(k) \neq k$, which contradicts injectivity of $\sigma$. Therefore $b_k > a_k$.

  Thus, the smallest indeterminate that differs between $M$ and $\prod_{i=1}^{\ell_c'} M_{T,c}[i,i]$ is $Z_{\mu_c'+k,a_k}$, and it appears in $\prod_{i=1}^{\ell_c'} M_{T,c}[i,i]$ not $M$. Thus $M\!<\!\prod_{i=1}^{\ell_c'} M_{T,c}[i,i]$ in the lexicographic order.
  \end{proof}

\begin{corollary}\label{cor:largest_monomial_of_D_T}
  Let $T$ be a SSYT with shape $\lambda/\mu$. Then $\prod_{c=1}^{\lambda_1}\prod_{i=1}^{\ell_{c}'} M_{T, c}[i, i]$ is the unique largest monomial in $D_T$. In other words, the largest monomial in $D_T$ is the product of the diagonal entries of each $M_{T, c}$ and it has coefficient $1$.
\end{corollary}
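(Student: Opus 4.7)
The plan is to bootstrap Lemma~\ref{lem:largest_monomial_determinant} through the product $D_T=\prod_{c=1}^{\lambda_1} D_{T,c}$ using the compatibility of the lexicographic order with multiplication supplied by Lemma~\ref{lem:property_of_monomial_ordering}. For each $c\in[\lambda_1]$, abbreviate $P_c:=\prod_{i=1}^{\ell'_c} M_{T,c}[i,i]$, and set $P:=\prod_c P_c$. By Lemma~\ref{lem:largest_monomial_determinant}, $P_c$ is the unique largest monomial of $D_{T,c}$. Moreover $P_c$ is precisely the Leibniz term indexed by the identity permutation in $\mathfrak{S}_{\ell'_c}$, so it appears in $D_{T,c}$ with coefficient $+1$.

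First I expand $D_T$ by distributing the product over the monomial expansions of the $D_{T,c}$, writing $D_T=\sum \pm\prod_{c} m_c$, where each $m_c$ ranges over the monomials of $D_{T,c}$. Given any tuple $(m_1,\dots,m_{\lambda_1})\ne(P_1,\dots,P_{\lambda_1})$ with $m_c\le P_c$ for every $c$, I pick a coordinate $c_0$ where $m_{c_0}<P_{c_0}$ and then invoke Lemma~\ref{lem:property_of_monomial_ordering} iteratively, feeding the strict inequality at $c_0$ as the $M_1<M_2$ input and the weak inequalities at the other coordinates as the successive $N_1\le N_2$ inputs. This yields $\prod_c m_c<\prod_c P_c=P$ in the lex order, so the only tuple producing $P$ is $(P_1,\dots,P_{\lambda_1})$.

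Both conclusions of the corollary then fall out at once. Collecting terms in the expansion of $D_T$, the monomial $P$ arises from exactly one summand in the distribution, which carries coefficient $\prod_c(+1)=1$; hence the coefficient of $P$ in $D_T$ is $1$. Every other summand contributes a monomial that is strictly smaller than $P$, so no monomial in $D_T$ exceeds $P$, and combined with the nonzero coefficient just computed, $P$ is the unique largest monomial of $D_T$.

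The one point that needs care is the possibility that distinct tuples $(m_1,\dots,m_{\lambda_1})$ might collapse onto the same product monomial (potentially altering $P$'s coefficient) or produce a monomial that dominates $P$. Both scenarios are excluded by the strict-inequality output of Lemma~\ref{lem:property_of_monomial_ordering}, so once that iterated comparison is set up cleanly the remainder is routine bookkeeping and I do not anticipate any further obstacle.
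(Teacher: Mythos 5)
Your proposal is correct and follows essentially the same route as the paper: apply Lemma~\ref{lem:largest_monomial_determinant} columnwise and then propagate the inequalities through the product $D_T=\prod_c D_{T,c}$ via repeated use of Lemma~\ref{lem:property_of_monomial_ordering}. Your added bookkeeping (identifying the diagonal term as the identity-permutation Leibniz term and ruling out collapse of distinct tuples) just makes explicit the coefficient-$1$ claim that the paper's proof leaves implicit.
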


\begin{proof}
  By Lemma~\ref{lem:largest_monomial_determinant}, for each column $c \in [\lambda_1]$, the largest monomial in the determinant $D_{T,c}$ is the diagonal monomial, $\prod_{i=1}^{\ell_c'} M_{T,c}[i,i]$. Since $D_T = \prod_{c\in [\lambda_1]} D_{T, c}$, repeated applications of Lemma \ref{lem:property_of_monomial_ordering} implies that the product of these largest monomials is the largest monomial in the full product. Thus $\prod_{c\in [\lambda_1]} \prod_{i=1}^{\ell_c'} M_{T, c}[i, i]$ is the largest monomial in $D_T$.
\end{proof}

We now define a total order on the set of SSYT of the same shape. This order is constructed to be compatible with the lexicographical order on the leading monomials of the associated polynomials $D_T$. This compatibility, proven in Lemma~\ref{lem:correspondence_of_orderings}, is the central component in the subsequent proof of linear independence.

\begin{definition}\label{def:reading_word_order}
Fix the ordered alphabet $[m]=\{1<2<\cdots<m\}$. For a filling $F$ of shape $\lambda/\mu$, the \emph{reading word} is the word $\mathrm{rw}(F)\in [m]^{|\lambda/\mu|}$ obtained by reading the entries of $F$ along each row from left to right, starting with the top row and proceeding to the bottom row. Equip $[m]^{|\lambda/\mu|}$ with the induced lexicographic order. For fillings $F,F'$ of shape $\lambda/\mu$, define a total order by declaring that $F \prec F'$ if and only if $\mathrm{rw}(F)$ is lexicographically smaller than $\mathrm{rw}(F')$.
\end{definition}

For any filling $T$ of shape $\lambda/\mu$, any row index $r$, and any entry $j$, let $m_T(r,j)$ denote the number of occurrences of the entry $j$ in row $r$ of $T$.

\begin{example}\label{ex:tableau_ordering}
Consider two semistandard Young tableaux $T,T'$ of shape $(4,3,2)/(1,1)$ with
\[
  T = \begin{ytableau}
    \none & 1 & 2 & 3 \\
    \none & 2 & 4 \\
    1 & 5
  \end{ytableau}
  \qquad \text{and} \qquad
  T' = \begin{ytableau}
    \none & 1 & 2 & 3 \\
    \none & 2 & 5 \\
    1 & 4
  \end{ytableau}.
\]
Then $\mathrm{rw}(T)=1232415$ and $\mathrm{rw}(T')=1232514$, and since $1232415$ is lexicographically smaller than $1232514$, we have $T\prec T'$. The first row where the contents of $T$ and $T'$ differ is row $2$ and $m_T(2,4)=1$ and $m_{T'}(2,4)=0$. For every $j<4$ the counts agree in that row. Thus in this instance $T\prec T'$ corresponds to the inequality $m_T(2,4)>m_{T'}(2,4)$. The following lemma establishes that this relationship holds in general.
\end{example}

\begin{lemma}\label{lem:tableau_ordering_by_entry_counts}
Let $T_1$ and $T_2$ be two distinct SSYT of shape $\lambda/\mu$. Let $r$ be the minimal row index such that the $r$-th rows of $T_1$ and $T_2$ differ. Let $j$ be the smallest entry for which $m_{T_1}(r,j)\neq m_{T_2}(r,j)$. Then $T_1\prec T_2$ if and only if $m_{T_1}(r,j)>m_{T_2}(r,j)$.
\end{lemma}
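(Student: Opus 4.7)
The plan is to reduce the statement to a direct comparison of row $r$, and then to analyze that comparison via counts of entries.

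First I would observe that since $T_1$ and $T_2$ agree on rows $1,\ldots,r-1$, the prefixes of the reading words $\mathrm{rw}(T_1)$ and $\mathrm{rw}(T_2)$ corresponding to these rows are identical. Hence the lexicographic comparison of $\mathrm{rw}(T_1)$ and $\mathrm{rw}(T_2)$ is determined entirely by the first position within row $r$ (read left-to-right) at which the two tableaux disagree. Note that row $r$ has the same length $k := \lambda_r - \mu_r$ in both tableaux, and in each tableau the entries of row $r$ form a weakly increasing sequence (by the SSYT condition), so row $r$ is completely determined by the count vector $(m_{T_i}(r,j'))_{j'=1}^{m}$.

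Next I would set $N := \sum_{j' < j} m_{T_1}(r,j') = \sum_{j' < j} m_{T_2}(r,j')$, where the equality holds by the minimality of $j$. Since row $r$ is weakly increasing, positions $1,\ldots,N$ of row $r$ in both $T_1$ and $T_2$ contain exactly the same multiset of entries (all $<j$), listed in the same weakly increasing order, so these positions agree entry by entry. Hence the first disagreement between $\mathrm{rw}(T_1)$ and $\mathrm{rw}(T_2)$ occurs at some position $\geq N+1$ in row $r$.

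Now I would assume $m_{T_1}(r,j) > m_{T_2}(r,j)$ and show $T_1 \prec T_2$. In $T_1$, positions $N+1,\ldots,N+m_{T_1}(r,j)$ of row $r$ are all equal to $j$; in $T_2$, positions $N+1,\ldots,N+m_{T_2}(r,j)$ are all equal to $j$ (and these are the only $j$'s in row $r$). So the two rows agree through position $N+m_{T_2}(r,j)$, and at position $p := N+m_{T_2}(r,j)+1$, $T_1$ has entry $j$ while $T_2$ has an entry strictly greater than $j$ (since $T_2$'s row $r$ has no more $j$'s left). The key bookkeeping step is verifying that position $p$ actually lies within row $r$ of $T_2$: both row-$r$'s have total length $k$, so
\[
\sum_{j' \geq j} m_{T_2}(r,j') \;=\; k - N \;=\; \sum_{j' \geq j} m_{T_1}(r,j') \;\geq\; m_{T_1}(r,j) \;>\; m_{T_2}(r,j),
\]
which shows that there exists some $j' > j$ with $m_{T_2}(r,j') > 0$, and the first such occurrence sits precisely at position $p$. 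Thus the reading word of $T_1$ is strictly smaller at position $p$, giving $T_1 \prec T_2$.

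Finally, the reverse implication follows from the same argument applied with the roles of $T_1$ and $T_2$ swapped: if $m_{T_1}(r,j) < m_{T_2}(r,j)$, then $T_2 \prec T_1$, hence $T_1 \not\prec T_2$. Combined with the forward direction this yields the biconditional. The only genuinely delicate point is the existence of position $p$ within row $r$ of $T_2$, which is handled by the simple counting identity above; everything else is a direct unpacking of Definition~\ref{def:reading_word_order}.
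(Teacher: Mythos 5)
Your proof is correct and follows essentially the same approach as the paper: reduce the lexicographic comparison to the row-$r$ subwords, note the segments of entries less than $j$ coincide, and locate the first disagreement at the boundary of the runs of $j$'s. The only differences are cosmetic --- you verify explicitly (via the counting identity) that the critical position exists in row $r$ of $T_2$, and you obtain the converse by swapping $T_1$ and $T_2$ and using trichotomy rather than arguing it directly as the paper does.
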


\begin{proof}
Let $A$ and $B$ be the subwords corresponding to row $r$ of $\mathrm{rw}(T_1)$ and $\mathrm{rw}(T_2)$, respectively. These are weakly increasing. By choice of $r$, all earlier rows coincide, so $T_1\prec T_2$ if and only if $A$ is lexicographically smaller than $B$.

Let $j$ be the smallest entry with $m_{T_1}(r,j)\neq m_{T_2}(r,j)$. For every $t<j$ the counts agree, hence the initial segments of $A$ and $B$ consisting of entries less than $j$ are identical. If $m_{T_1}(r,j)>m_{T_2}(r,j)$, then at the first position after these initial segments, $A$ has the letter $j$ while $B$ has a letter strictly larger than $j$, so $A<B$ in lexicographic order. Conversely, if $A<B$, then at the first differing position $A$ has some letter $x$ and $B$ has a letter $y>x$; by minimality of $j$ one must have $x=j$, and this forces $m_{T_1}(r,j)>m_{T_2}(r,j)$. Hence $T_1\prec T_2$ if and only if $m_{T_1}(r,j)>m_{T_2}(r,j)$.
\end{proof}

\begin{lemma}\label{lem:correspondence_of_orderings}
Let $T_1$ and $T_2$ be two distinct SSYT of shape $\lambda/\mu$. If $T_1 \prec T_2$, then the largest monomial of $D_{T_1}$ is strictly greater than the largest monomial of $D_{T_2}$.
\end{lemma}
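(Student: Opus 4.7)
The plan is to reduce the lemma to Lemma~\ref{lem:tableau_ordering_by_entry_counts} by writing the leading monomial of $D_T$ in a form that directly exposes the row-by-row entry multiplicities $m_T(r,j)$. By Corollary~\ref{cor:largest_monomial_of_D_T}, the leading monomial of $D_T$ is the product of the diagonal entries $M_{T,c}[i,i]$ across all columns $c$, and by the definition of $M_{T,c}$ each such diagonal entry is exactly $Z_{r,T[r,c]}$ for the box $(r,c)\in\lambda/\mu$. Regrouping the resulting product over all boxes of $\lambda/\mu$ by row index $r$ and entry value $j$ gives
\[
\mathrm{LM}(D_T)\;=\;\prod_{r}\prod_{j\in[m]} Z_{r,j}^{\,m_T(r,j)}.
\]
I would record this identity as the first step.

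Next, I would invoke Lemma~\ref{lem:tableau_ordering_by_entry_counts} to pin down the decisive indices: since $T_1\prec T_2$, there is a minimal row $r$ where $T_1$ and $T_2$ differ and a smallest entry $j$ with $m_{T_1}(r,j)\neq m_{T_2}(r,j)$, and that lemma asserts $m_{T_1}(r,j)>m_{T_2}(r,j)$. With the formula for $\mathrm{LM}(D_T)$ above, comparing $\mathrm{LM}(D_{T_1})$ and $\mathrm{LM}(D_{T_2})$ in the lexicographic variable order of Definition~\ref{def:lexicographical_ordering_on_monomials} amounts to finding the smallest pair $(i_0,j_0)$ at which the exponents $m_{T_1}(i_0,j_0)$ and $m_{T_2}(i_0,j_0)$ differ. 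For $i_0<r$, rows $1,\ldots,r-1$ of $T_1$ and $T_2$ coincide, so all exponents agree; within row $r$, the minimality of $j$ guarantees agreement of exponents of $Z_{r,j_0}$ for $j_0<j$. Thus $(r,j)$ is precisely the smallest disagreeing index, and the exponent at $Z_{r,j}$ is strictly larger in $\mathrm{LM}(D_{T_1})$ than in $\mathrm{LM}(D_{T_2})$.

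Finally, by the definition of the lexicographic order on monomials, a strictly smaller exponent at the first disagreeing variable produces a strictly smaller monomial, so $\mathrm{LM}(D_{T_2}) < \mathrm{LM}(D_{T_1})$, which is the claim.

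The argument is not hard; the main point requiring care is the index bookkeeping, translating between the column-wise parametrization used in Corollary~\ref{cor:largest_monomial_of_D_T} (products over $c$ and $i=1,\ldots,\ell'_c$ via $\row_{c,i}$) and the row-wise multiplicity parametrization $m_T(r,j)$ used in Lemma~\ref{lem:tableau_ordering_by_entry_counts}. Once the identification $\mathrm{LM}(D_T)=\prod_{r,j} Z_{r,j}^{m_T(r,j)}$ is in place, the comparison reduces to a direct reading of the lex order.
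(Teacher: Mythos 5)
Your proposal is correct and follows essentially the same route as the paper's proof: both rewrite the leading monomial (via Corollary~\ref{cor:largest_monomial_of_D_T}) as $\prod_{r,j} Z_{r,j}^{m_T(r,j)}$ by regrouping the diagonal entries row by row, then apply Lemma~\ref{lem:tableau_ordering_by_entry_counts} to identify $(r,j)$ as the first index where the exponent vectors differ and conclude from the lexicographic order of Definition~\ref{def:lexicographical_ordering_on_monomials}. No gaps to report.
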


\begin{proof}
  Let the largest monomials of $D_{T_1}$ and $D_{T_2}$ be $M_1$ and $M_2$, respectively. By Corollary~\ref{cor:largest_monomial_of_D_T},
\[
M_1=\prod_{c=1}^{\lambda_1}\prod_{i=1}^{\ell'_c} M_{T_1,c}[i,i],\qquad
M_2=\prod_{c=1}^{\lambda_1}\prod_{i=1}^{\ell'_c} M_{T_2,c}[i,i].
\]
For $M_1$, fix a column $c$ and an index $1\le i\le \ell'_c$. Then,
\[
M_{T_1,c}[i,i]=Z_{\row_{c,i},\,T_1[\row_{c,i},c]}.
\]
Thus, as $c$ ranges over columns and $i$ over $1,\dots,\ell'_c$, each occurrence of the entry $j$ in diagram row $r=\row_{c,i}$ of $T_1$ contributes one factor $Z_{r,j}$ to $M_1$. For any fixed diagram row $r$, the exponent of $Z_{r,j}$ in $M_1$ is therefore the number of columns $c$ with $T_1[r,c]=j$, that is, $m_{T_1}(r,j)$. The same reasoning applies to $M_2$. Thus,
\[
M_1 = \prod_{r=1}^{\lambda'_1}\ \prod_{j=1}^{m} Z_{r,j}^{\,m_{T_1}(r,j)},\qquad
M_2 = \prod_{r=1}^{\lambda'_1}\ \prod_{j=1}^{m} Z_{r,j}^{\,m_{T_2}(r,j)}.
\]

By Lemma~\ref{lem:tableau_ordering_by_entry_counts}, there exist a minimal row index $r$ and a smallest entry $j$ with $m_{T_1}(r,j)>m_{T_2}(r,j)$ and $m_{T_1}(r',j')=m_{T_2}(r',j')$ for all $(r',j')$ preceding $(r,j)$ in the lexicographic order. Since variables are ordered lexicographically, $(r,j)$ is the first index at which the exponent vectors of $M_1$ and $M_2$ differ, and the exponent of $Z_{r,j}$ in $M_1$ is larger than in $M_2$. Hence $M_2<M_1$ in lexicographic order.
\end{proof}

\begin{proposition}\label{prop:linear_independece_of_semi_standard_young_tableau}
Let $E$ be a free $R$-module of rank $m$. For a fixed skew shape $\lambda/\mu$ and content $z$, the set of elements $\{\bare{T} \mid T \in \operatorname{SSYT}(\lambda/\mu, z)\}$ is linearly independent in $E^{\lambda/\mu}$.
\end{proposition}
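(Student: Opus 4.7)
The plan is to apply the homomorphism $\Psi\colon E^{\lambda/\mu}\to R[Z]$ of Proposition~\ref{prop:lemma3inFulton} and reduce the question to a leading-monomial argument in the polynomial ring. Since $\Psi$ is $R$-linear and sends $\bare{T}\mapsto D_T$, any $R$-linear relation among the $\{\bare{T}\}$ pushes forward to the same relation among the $\{D_T\}$ in $R[Z]$. Hence it suffices to show that $\{D_T : T\in\operatorname{SSYT}(\lambda/\mu,z)\}$ is $R$-linearly independent in $R[Z]$.

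For this I would argue by contradiction. Suppose there are coefficients $c_T\in R$, not all zero, with $\sum_{T\in\operatorname{SSYT}(\lambda/\mu,z)} c_T\, D_T = 0$. Totally order the SSYT of content $z$ by the reading-word order $\prec$ of Definition~\ref{def:reading_word_order}, and let $T^\star$ be the $\prec$-minimal tableau with $c_{T^\star}\neq 0$. Let $M^\star$ be the leading (largest) monomial of $D_{T^\star}$; by Corollary~\ref{cor:largest_monomial_of_D_T}, $M^\star$ appears in $D_{T^\star}$ with coefficient $1$. For any other SSYT $T$ with $c_T\neq 0$, minimality of $T^\star$ gives $T^\star\prec T$, so by Lemma~\ref{lem:correspondence_of_orderings} the leading monomial of $D_T$ is strictly smaller than $M^\star$, and in particular every monomial appearing in $D_T$ is strictly smaller than $M^\star$. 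Consequently $M^\star$ does not occur in any $D_T$ for $T\neq T^\star$, so the coefficient of $M^\star$ on the left-hand side of $\sum_T c_T D_T = 0$ equals $c_{T^\star}\cdot 1 = c_{T^\star}$, forcing $c_{T^\star}=0$, a contradiction.

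The main conceptual step, and the one that required the groundwork already laid in the excerpt, is the compatibility between the combinatorial order $\prec$ on SSYT and the lexicographic order on monomials in $R[Z]$, namely Lemma~\ref{lem:correspondence_of_orderings} together with the uniqueness-with-coefficient-$1$ statement of Corollary~\ref{cor:largest_monomial_of_D_T}. Once these are granted, the linear independence reduces to a standard leading-term comparison, and no further technical obstacle arises; in particular, no cancellation among the $D_T$ can ever wipe out the largest monomial of the $\prec$-minimal surviving tableau.
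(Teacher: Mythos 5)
Your proposal is correct and follows essentially the same route as the paper: push the relation through $\Psi$ from Proposition~\ref{prop:lemma3inFulton} and compare leading monomials using Corollary~\ref{cor:largest_monomial_of_D_T} and Lemma~\ref{lem:correspondence_of_orderings}. The only cosmetic difference is that you extract the $\prec$-minimal tableau with nonzero coefficient and derive a contradiction, while the paper peels off coefficients iteratively starting from the largest leading monomial; the underlying argument is identical.
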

\begin{proof}
Consider a linear relation $\sum_{T \in \operatorname{SSYT}(\lambda/\mu, z)} c_T\,\bare{T}=0$ with $c_T\in R$, and order $\text{SSYT}(\lambda/\mu,z)$ as $T_1\prec T_2\prec\cdots\prec T_n$. Applying $\Psi$ from Proposition~\ref{prop:lemma3inFulton} and using linearity gives the single identity
\[
\Psi\!\left(\sum_{i=1}^{n} c_{T_i}\,\bare{T_i}\right)
\;=\;
\sum_{i=1}^{n} c_{T_i}\,\Psi(\bare{T_i})
\;=\;
\sum_{i=1}^{n} c_{T_i}\,D_{T_i}
\;=\;
0.
\]
For each $i$, let $M_i$ denote the largest monomial of $D_{T_i}$. By Corollary~\ref{cor:largest_monomial_of_D_T}, the coefficient of $M_i$ in $D_{T_i}$ is $1$. By Lemma~\ref{lem:correspondence_of_orderings}, the leading monomials satisfy $M_1\!>\!M_2\!>\!\cdots\!>\!M_n$. The coefficient of $M_1$ in the displayed identity is $c_{T_1}$, since $M_1$ does not occur in any $D_{T_j}$ for $j\!>\!1$, and so $c_{T_1}\!=\!0$. Removing that term and repeating the argument with $M_2$, then $M_3$, and so on, yields $c_{T_i}\!=\!0$ for all $i$. Thus, $\{\bare{T} \!\mid\! T\!\in\! \text{SSYT}(\lambda/\mu,z)\}$ is linearly independent in $E^{\lambda/\mu}$.
\end{proof}

\begin{definition}\label{def:column_words_order}
We define a total order, called the \emph{column word order}, on $F(\lambda/\mu,z)$. For a filling $F$ of shape $\lambda/\mu$, the \emph{column word} is the word $\mathrm{cw}(F)\in [m]^{|\lambda/\mu|}$ obtained by reading the entries of $F$ top to bottom within each column, starting with the rightmost column and proceeding leftward. For fillings $E,F\in F(\lambda/\mu,z)$, declare that $E \prec_{\mathrm{col}} F$ if and only if $\mathrm{cw}(E)$ is lexicographically smaller than $\mathrm{cw}(F)$. Define the operator $\operatorname{colsort}:F(\lambda/\mu,z)\to F(\lambda/\mu,z)$ by the rule that $\operatorname{colsort}(F)$ is the filling obtained from $F$ by sorting each column in weakly increasing order from top to bottom.
\end{definition}

\begin{example}
Let $F_1,F_2$ be fillings of shape $(3,2,1)/(1)$ with
\[
  F_1 = \begin{ytableau}
    \none & 3 & 4 \\
    2 & 2 \\
    1
  \end{ytableau}
  \qquad\text{and}\qquad
  F_2 = \begin{ytableau}
    \none & 2 & 4 \\
    1 & 3 \\
    2
  \end{ytableau}.
\]
Reading columns right-to-left, top-to-bottom, gives
\[
\mathrm{cw}(F_1)=43221,\qquad \mathrm{cw}(F_2)=42312.
\]
Since $42312$ is lexicographically smaller than $43221$, it follows that $F_2 \prec_{\mathrm{col}} F_1$.
\end{example}

\begin{lemma}\label{lem:garnir_on_column}
Let $F \in F(\lambda/\mu, z)$ be strictly increasing down each column. Suppose there exist $c_1<c_2$ and a row $r$ such that $(r,c_1),(r,c_2)\in D(\lambda/\mu)$ and $F(r,c_1)>F(r,c_2)$. Let $a=\lambda'_{c_1}-r+1$ and $b=r-\mu'_{c_2}$. Then $\lambda'_{c_1}-a<\mu'_{c_2}+b$, and for every $\pi\in \mathfrak{S}_{a+b}^{a,b}\setminus\{\mathrm{id}\}$ one of the following holds:
\begin{enumerate}
  \item $F_\pi$ has a repeated entry in some column, in which case $\bare{F_\pi}=0$ in $E^{\lambda/\mu}$.
  \item $F_\pi$ has no repeated entry in any column, and with $F':=\operatorname{colsort}(F_\pi)$ one has $F\prec_{\mathrm{col}} F'$.
\end{enumerate}
\end{lemma}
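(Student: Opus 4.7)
My plan is as follows. The numerical inequality is immediate: substituting $a=\lambda'_{c_1}-r+1$ and $b=r-\mu'_{c_2}$ yields $\lambda'_{c_1}-a=r-1<r=\mu'_{c_2}+b$, and the constraints $\mu'_{c_1}<r\le\lambda'_{c_1}$ and $\mu'_{c_2}<r\le\lambda'_{c_2}$ (from $(r,c_1),(r,c_2)\in D(\lambda/\mu)$) force $1\le a\le\ell'_{c_1}$ and $1\le b\le\ell'_{c_2}$, so $(a,b,c_1,c_2)$ is $\lambda/\mu$-admissible. Abbreviate $x_i:=F[r+i-1,c_1]$ for $1\le i\le a$ and $y_j:=F[\mu'_{c_2}+j,c_2]$ for $1\le j\le b$. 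Column-strictness of $F$ gives $x_1<\cdots<x_a$ and $y_1<\cdots<y_b$, while the hypothesis $F[r,c_1]>F[r,c_2]$ reads $x_1>y_b$. Consequently the Garnir multiset $\{x_1,\dots,x_a,y_1,\dots,y_b\}$ consists of $a+b$ distinct values whose sorted arrangement is $y_1<\cdots<y_b<x_1<\cdots<x_a$; in particular $\{y_1,\dots,y_b\}$ is the $b$-element subset of minimum values.

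Cases (1) and (2) are exhaustive. If $F_\pi$ has a repeated entry in some column, relation (i) of Lemma~\ref{lem:lemma1} immediately gives $\bare{F_\pi}=0$ in $E^{\lambda/\mu}$. So assume case (2): $F_\pi$ has no column repeat. A shuffle $\pi\in\mathfrak{S}_{a+b}^{a,b}\setminus\{\mathrm{id}\}$ is determined by the subset $B=\{\pi(a+1)<\cdots<\pi(a+b)\}\subseteq\{1,\dots,a+b\}$ of positions mapped to column $c_2$; tracing through Definition~\ref{def:permutation_filling}, the multiset $V$ of entries placed into the top $b$ cells of column $c_2$ in $F_\pi$ is a $b$-subset of $\{x_1,\dots,x_a,y_1,\dots,y_b\}$. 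Because $\pi\neq\mathrm{id}$, one has $V\neq\{y_1,\dots,y_b\}$, so $V$ must contain at least one $x_i$. Since $F_\pi$ agrees with $F$ outside columns $c_1$ and $c_2$, and $F$ is already column-strict, $F':=\operatorname{colsort}(F_\pi)$ agrees with $F$ on every column except possibly $c_1$ and $c_2$. Reading the column word right-to-left, the first column where $F$ and $F'$ can disagree is therefore $c_2$, so it suffices to prove strict column-word comparison within column $c_2$.

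Let $u_1<\cdots<u_s$ denote the entries of column $c_2$ strictly below row $r$, which are unchanged by $\pi$. Then column $c_2$ of $F$ is already sorted as $y_1<\cdots<y_b<u_1<\cdots<u_s$, while column $c_2$ of $F'$ is the sorted arrangement of $V\cup\{u_1,\dots,u_s\}$. Since $\{y_1,\dots,y_b\}$ consists of the $b$ smallest elements of $\{y_j\}\cup\{x_i\}$, for any threshold $v$ one has $|\{j:y_j\le v\}|\ge|\{w\in V:w\le v\}|$; adding the common count $|\{i:u_i\le v\}|$ preserves this inequality, so for every $k$ the $k$-th smallest of $\{y_j\}\cup\{u_i\}$ is at most the $k$-th smallest of $V\cup\{u_i\}$. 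The sorted column $c_2$ of $F'$ thus componentwise dominates the sorted column $c_2$ of $F$, and since $V\neq\{y_j\}$ the multisets differ, so the dominance is strict at some coordinate. At the topmost row where these two sorted columns disagree, $F$ has the strictly smaller entry, yielding $\mathrm{cw}(F)$ lexicographically smaller than $\mathrm{cw}(F')$, i.e., $F\prec_{\mathrm{col}}F'$. The main obstacle is the order-statistic dominance argument transferring the componentwise inequality on $b$-subsets through union with the fixed tail $\{u_i\}$; the threshold-count sketch above makes this precise.
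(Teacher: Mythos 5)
Your proposal is correct and follows essentially the same route as the paper's proof: verify the Garnir inequality numerically, dispose of the repeated-entry case by column-alternation, and otherwise show that the multiset of entries in column $c_2$ can only increase in the order-statistic sense (strictly somewhere, since a nontrivial shuffle imports at least one entry from column $c_1$ that exceeds every original top entry of column $c_2$), so the column word strictly increases while columns to the right of $c_2$ are untouched. Your threshold-counting justification of the order-statistic dominance is just a more explicit rendering of the paper's observation that replacing entries by strictly larger ones makes every order statistic weakly increase.
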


\begin{proof}
By the definitions of $a$ and $b$,
\[
\lambda'_{c_1}-a=r-1 \qquad\text{and}\qquad \mu'_{c_2}+b=r,
\]
so $\lambda'_{c_1}-a<\mu'_{c_2}+b$ holds. Fix $\pi\in \mathfrak{S}_{a+b}^{a,b}\setminus\{\mathrm{id}\}$. If $F_\pi$ has a repeated entry in some column, then $\bare{F_\pi}=0$ by the column-alternating property of $E^{\lambda/\mu}$. Hence assume $F_\pi$ has no repeated entry in any column.

Set $F':=\operatorname{colsort}(F_\pi)$ and define the multisets
\[
X_1=\{F[k,c_1]\!:\!\ r\le k\le \lambda'_{c_1}\}, \,\,
X_2=\{F[k,c_2]\!:\!\ \mu'_{c_2}<k\le r\}, \,\,
X_3=\{F[k,c_2]\!:\!\ r<k\le \lambda'_{c_2}\},
\]
and write $C_2=X_2\uplus X_3$ for the multiset of entries in column $c_2$ of $F$. Since $F$ is column-strict and $F(r,c_1)>F(r,c_2)$, every element of $X_1$ is strictly larger than every element of $X_2$. The $(a,b)$-shuffle replaces a nonempty submultiset $R\subseteq X_2$ by a multiset $S\subseteq X_1$ with $|S|=|R|$ and all elements of $S$ strictly larger than all elements of $X_2$. Set $X_2':=(X_2\setminus R)\uplus S$; then the multiset of entries in column $c_2$ of $F'$ is $C_2':=X_2'\uplus X_3$.

Let
\[
W=\operatorname{sort}(C_2)=(w_1\le\cdots\le w_N),\qquad
W'=\operatorname{sort}(C_2')=(w_1'\le\cdots\le w_N'),
\]
be the nondecreasing listings of $C_2$ and $C_2'$. Because $C_2'$ is obtained from $C_2$ by increasing some entries and never decreasing any, every order statistic is monotone: for all $k$, $w_k'\ge w_k$, and for at least one $k$ one has $w_k'>w_k$. Let $p$ be the minimal index with $w_p'>w_p$. Then $w_j'=w_j$ for all $j<p$ and $w_p'>w_p$, so $W'$ is lexicographically larger than $W$.

Finally, note that $W$ and $W'$ are precisely the subwords contributed by column $c_2$ to $\mathrm{cw}(F)$ and $\mathrm{cw}(F')$, respectively, and that columns strictly to the right of $c_2$ are unchanged by the shuffle. Hence $\mathrm{cw}(F')$ is lexicographically larger than $\mathrm{cw}(F)$, that is, $F\prec_{\mathrm{col}}F'$.
\end{proof}

\begin{lemma}\label{lem:row_straightening}
Let $F \in F(\lambda/\mu, z)$ be strictly increasing down each column. If $F$ is not a SSYT, then $\bare{F}$ can be expressed as a $R_0$-linear combination of $\bare{F'}$ such that $F' \in F(\lambda/\mu, z)$ and $F \! \prec_{col} \! F'$.
\end{lemma}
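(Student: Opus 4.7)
The plan is to apply a single Garnir relation at a row violation of $F$ and then clean up each resulting filling via column operations. Since $F$ is strictly increasing down every column but is not an SSYT, there must exist a row $r$ and columns $c_1 < c_2$ with $(r,c_1),(r,c_2)\in D(\lambda/\mu)$ and $F(r,c_1) > F(r,c_2)$. Setting $a := \lambda'_{c_1} - r + 1$ and $b := r - \mu'_{c_2}$, Lemma~\ref{lem:garnir_on_column} gives $\lambda'_{c_1} - a < \mu'_{c_2} + b$, so the tuple $(a,b,c_1,c_2)$ is $\lambda/\mu$-admissible.

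Next, I would invoke the Garnir relation of type (iii) in Lemma~\ref{lem:lemma1} applied to this tuple and $F$, which states that $\sum_{\pi\in\mathfrak{S}_{a+b}^{a,b}} \sign{\pi}\,\bare{F_\pi} = 0$ in $E^{\lambda/\mu}$. Isolating the identity term yields
\[
\bare{F} \;=\; -\sum_{\pi\in\mathfrak{S}_{a+b}^{a,b}\setminus\{\mathrm{id}\}}\sign{\pi}\,\bare{F_\pi}.
\]
Now I would process each summand on the right using the dichotomy from Lemma~\ref{lem:garnir_on_column}. If $F_\pi$ has a repeated entry in some column, then $\bare{F_\pi} = 0$ in $E^{\lambda/\mu}$ by relation (i) of Lemma~\ref{lem:lemma1}, and the term drops out. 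Otherwise, $F_\pi$ is column-distinct, and the column-sorted filling $F'_\pi := \operatorname{colsort}(F_\pi)$ satisfies $F \prec_{\mathrm{col}} F'_\pi$. By iteratively applying the column-transposition relation (ii), we get $\bare{F_\pi} = \sign{\tau_\pi}\,\bare{F'_\pi}$, where $\tau_\pi$ is the product of the within-column sorting permutations; in particular the coefficient is $\pm 1 \in R_0$.

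Combining these replacements expresses $\bare{F}$ as an $R_0$-linear combination of elements $\bare{F'}$ with $F \prec_{\mathrm{col}} F'$, as required. The potential subtleties I would double-check are (a) that the required admissibility inequality $\lambda'_{c_1}-a<\mu'_{c_2}+b$ is furnished by our specific choice of $a,b$ from Lemma~\ref{lem:garnir_on_column}, and (b) that collecting like terms among the resulting $\bare{F'_\pi}$ (several shuffles could produce column-sorted fillings in the same filling class, or could coincide with $F$ itself) is harmless, since Lemma~\ref{lem:garnir_on_column} guarantees strict inequality $F \prec_{\mathrm{col}} F'_\pi$ for every non-identity shuffle that survives. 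The only place one might worry is the sign bookkeeping in (ii), but because $R_0 = \mathbb{Z}\cdot 1_R$ absorbs all signs, the final expression is automatically an $R_0$-linear combination.
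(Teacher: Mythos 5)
Your proposal is correct and follows essentially the same route as the paper: the same choice of $a=\lambda'_{c_1}-r+1$, $b=r-\mu'_{c_2}$ at a row violation, the same isolation of the identity term in the Garnir relation, and the same use of Lemma~\ref{lem:garnir_on_column} together with column sorting (relations (i) and (ii)) to conclude $F\prec_{\mathrm{col}}F'$ for every surviving term. The only point the paper states explicitly that you leave implicit is that the shuffle preserves content, so each $F_\pi$ indeed lies in $F(\lambda/\mu,z)$; this is immediate and does not affect correctness.
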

\begin{proof}
Since $F$ is not a SSYT, there exist columns $c_1<c_2$ and a row $r$ such that $(r,c_1),(r,c_2)\in D(\lambda/\mu)$ and $F(r,c_1)>F(r,c_2)$. Set $a =\lambda'_{c_1}-r+1$ and $b =r-\mu'_{c_2}$. Then $\lambda'_{c_1}-a=r-1$ and $\mu'_{c_2}+b=r$, so the Garnir inequality $\lambda'_{c_1}-a<\mu'_{c_2}+b$ holds. The Garnir relation in $E^{\lambda/\mu}$ gives
\[
\sum_{\pi\in \mathfrak{S}_{a+b}^{a,b}}\operatorname{sgn}(\pi)\,\bare{F_\pi}=0
\quad \text{ and thus }\quad
\bare{F} \;=\; -\sum_{\pi\in \mathfrak{S}_{a+b}^{a,b}\setminus\{\mathrm{id}\}}\operatorname{sgn}(\pi)\,\bare{F_\pi}.
\]
The shuffle preserves content, so each $F_\pi$ lies in $F(\lambda/\mu,z)$. Let $F_{\pi,c}:=\operatorname{colsort}(F_\pi)$. By the column-alternating property of $E^{\lambda/\mu}$, one has $\bare{F_\pi}=\pm \bare{F_{\pi,c}}$.

By Lemma~\ref{lem:garnir_on_column}, applied to the chosen $r,c_1,c_2$, for every nontrivial $\pi$ either $\bare{F_\pi}=0$ or $F\prec_{\mathrm{col}}F_{\pi,c}$. Substituting into the display above expresses $\bare{F}$ as a $R_0$-linear combination of $\bare{F'}$ with $F'\in F(\lambda/\mu,z)$ and $F\prec_{\mathrm{col}}F'$ for every nonzero term, as claimed.
\end{proof}

\begin{proposition}\label{prop:straightening}
Let $F \in F(\lambda/\mu, z)$ be a filling such that $\bare{F} \neq 0$ in $E^{\lambda/\mu}$. Then $\bare{F}$ can be expressed as a $R_0$-linear combination of elements $\{\bare{T}\}$ with $T \in \operatorname{SSYT}(\lambda/\mu, z)$.
\end{proposition}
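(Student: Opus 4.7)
The plan is to combine a column-sorting reduction with downward induction on the column-word order $\prec_{\mathrm{col}}$, using Lemma~\ref{lem:row_straightening} as the driver.

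First, I would reduce to the case that $F$ is strictly increasing down each column. Since $\bare{F}\neq 0$, relation (i) in Lemma~\ref{lem:lemma1} precludes any repeated entry within a column of $F$. Hence each column of $F$ consists of distinct values, and iterated transpositions within columns (relation (ii)) yield a sign $\epsilon\in\{\pm 1\}\subseteq R_0$ with $\bare{F}=\epsilon\,\bare{\operatorname{colsort}(F)}$. Because $\operatorname{colsort}(F)\in F(\lambda/\mu,z)$, it suffices to establish the conclusion under the added assumption that $F$ is already column-strict.

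Next, I would perform a downward induction on the finite totally ordered set $(F(\lambda/\mu,z),\prec_{\mathrm{col}})$, restricted to column-strict fillings. Suppose the conclusion is known for every column-strict $F''$ with $F\prec_{\mathrm{col}} F''$. If $F$ is an SSYT, then $\bare{F}$ is already in the required form. Otherwise, Lemma~\ref{lem:row_straightening} writes $\bare{F}$ as an $R_0$-linear combination of elements $\bare{F'}$ with $F\prec_{\mathrm{col}} F'$. Inspection of that lemma's proof shows each such $F'$ arises as $\operatorname{colsort}(F_\pi)$ for some nontrivial shuffle $\pi$, so each $F'$ is itself column-strict. By the inductive hypothesis each $\bare{F'}$ is already an $R_0$-linear combination of elements $\bare{T}$ with $T\in\operatorname{SSYT}(\lambda/\mu,z)$; substituting delivers the required expansion of $\bare{F}$. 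The base case is the unique $\prec_{\mathrm{col}}$-maximal column-strict filling, which must be an SSYT, for otherwise Lemma~\ref{lem:row_straightening} would produce a column-strict filling strictly larger in $\prec_{\mathrm{col}}$, contradicting maximality.

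The only point that requires attention is confirming that the intermediate fillings supplied by Lemma~\ref{lem:row_straightening} are column-strict, so that the induction can continue; this is immediate from the explicit $F'=\operatorname{colsort}(F_\pi)$ step inside that lemma's proof. I do not anticipate any other substantive obstacle, since the determinantal and combinatorial content needed for the straightening step has already been packaged into Lemmas~\ref{lem:garnir_on_column} and \ref{lem:row_straightening}, and the termination of the scheme is guaranteed by the finiteness of $F(\lambda/\mu,z)$.
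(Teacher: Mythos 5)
Your proposal is correct and follows essentially the same route as the paper's proof: reduce to the column-strict case via $\operatorname{colsort}$ using relations (i) and (ii) of Lemma~\ref{lem:lemma1}, then run a well-founded induction descending along $\prec_{\mathrm{col}}$ with Lemma~\ref{lem:row_straightening} supplying the inductive step and the $\prec_{\mathrm{col}}$-maximal case forcing an SSYT. Your explicit check that the intermediate fillings $F'=\operatorname{colsort}(F_\pi)$ are column-strict is the same point the paper uses tacitly when it asserts each $G_i$ is column-strict.
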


\begin{proof}
If $F$ has a repeated entry in some column, then $\bare{F}=0$ by Lemma~\ref{lem:lemma1}(i), contrary to hypothesis. Thus the entries in each column of $F$ are distinct. Let $F_c:=\operatorname{colsort}(F)$. By Lemma~\ref{lem:lemma1}(ii), $\bare{F}=\pm \bare{F_c}$, so it suffices to straighten $\bare{F_c}$.

Proceed by strong induction on the set of column-strict fillings in $F(\lambda/\mu,z)$, descending by $\prec_{\mathrm{col}}$. Since the map $F\mapsto \mathrm{cw}(F)$ identifies $F(\lambda/\mu,z)$ with a finite subset of $[m]^{|\lambda/\mu|}$ ordered lexicographically, $\prec_{\mathrm{col}}$ is well-founded. Hence there are no infinite strictly increasing chains under $\prec_{\mathrm{col}}$, and the process terminates. If $F_c$ is maximal under $\prec_{\mathrm{col}}$, then $F_c$ must be an SSYT; otherwise there exist $r$ and $c_1<c_2$ with $F_c(r,c_1)>F_c(r,c_2)$, and Lemma~\ref{lem:row_straightening} would express $\bare{F_c}$ as a $R_0$-linear combination of $\bare{G'}$ with $F_c\prec_{\mathrm{col}}G'$, contradicting maximality.

If $F_c$ is not an SSYT, choose $r,c_1<c_2$ with $F_c(r,c_1)>F_c(r,c_2)$. By Lemma~\ref{lem:row_straightening},
\[
\bare{F_c} \;=\; \sum_i c_i\,\bare{G_i},
\]
where each $G_i$ is column-strict and $F_c\!\prec_{\mathrm{col}} \!G_i$. By the inductive hypothesis, each $\bare{G_i}$ is a $R_0$-linear combination of $\{\bare{T}\!:\! T \in \text{SSYT}(\lambda/\mu,z)\}$, and substituting yields the same for $\bare{F_c}$.

Thus every column-strict filling straightens to a $R_0$-linear combination of $\bare{T}$ with $T\in \text{SSYT}(\lambda/\mu,z)$, and the reduction $\bare{F}=\pm \bare{F_c}$ completes the argument for all $F\in F(\lambda/\mu,z)$.
\end{proof}

\begin{theorem}\label{thm:basis_of_schur_module}
If $E$ is a free $R$-module with basis $\{e_1, \ldots, e_m\}$, then the skew Schur module $E^{\lambda/\mu}$ is a free $R$-module. Its basis is given by the set of elements corresponding to SSYT,
\[
\mathcal{S}_{\lambda/\mu} := \bigcup_{z} \{\bare{T} \mid T \in \operatorname{SSYT}(\lambda/\mu, z)\},
\]
where the union is over all contents $z$ with support in $\{1, \ldots, m\}$.
\end{theorem}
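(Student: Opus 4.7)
The plan is to combine a content-grading argument with the two main propositions of this subsection: Proposition~\ref{prop:straightening} supplies spanning, Proposition~\ref{prop:linear_independece_of_semi_standard_young_tableau} supplies linear independence within a fixed content, and a grading observation on $Q$ glues these content-by-content statements into a global basis result.

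First I would establish spanning. By Lemma~\ref{lem:lemma1} the module $E^{\lambda/\mu}$ is isomorphic to $M_R/Q$, so it is $R$-generated by the classes $\bare{F}$ as $F$ ranges over all fillings of $\lambda/\mu$ with entries in $[m]$. For each such $F$, either $\bare{F}=0$, in which case $\bare{F}$ lies trivially in the $R$-span of $\mathcal{S}_{\lambda/\mu}$, or Proposition~\ref{prop:straightening} expresses $\bare{F}$ as an $R_0$-linear (and hence $R$-linear) combination of elements $\bare{T}$ with $T \in \operatorname{SSYT}(\lambda/\mu,z)$ for the content $z$ of $F$. In either case $\bare{F}$ lies in the $R$-span of $\mathcal{S}_{\lambda/\mu}$, so $\mathcal{S}_{\lambda/\mu}$ spans $E^{\lambda/\mu}$.

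Next I would handle linear independence by first decomposing $M_R$ by content. Write $M_R=\bigoplus_z M_{R,z}$, where $M_{R,z}$ is the free submodule on $\{e_F : F \in F(\lambda/\mu,z)\}$. The key observation is that each of the three families of generators of $Q$ from Lemma~\ref{lem:lemma1} is homogeneous with respect to this grading. Types (i) and (ii) involve a single filling or a swap of two entries within one column, and type (iii) is a signed sum over Garnir shuffles $F_\pi$ of a fixed Garnir set; in every case the multiset of entries of the filling is preserved and therefore so is the content. Hence $Q=\bigoplus_z Q_z$ with $Q_z=Q\cap M_{R,z}$, and
\[
E^{\lambda/\mu}\;\cong\;M_R/Q\;\cong\;\bigoplus_z M_{R,z}/Q_z.
\]
Given any $R$-linear relation $\sum_T c_T\bare{T}=0$ over elements of $\mathcal{S}_{\lambda/\mu}$, projecting onto each summand yields a relation $\sum_{T \in \operatorname{SSYT}(\lambda/\mu,z)} c_T\bare{T}=0$ for every content $z$ separately. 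Proposition~\ref{prop:linear_independece_of_semi_standard_young_tableau} then forces every coefficient $c_T$ to vanish, which gives linear independence of the full set $\mathcal{S}_{\lambda/\mu}$.

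The only load-bearing step beyond direct invocation of prior results is the verification that $Q$ is content-graded, and this is immediate once one notes that every generator of $Q$ merely permutes entries of a filling within the skew shape. I do not anticipate any real obstacle here; the interesting mathematical content has already been absorbed by Propositions~\ref{prop:linear_independece_of_semi_standard_young_tableau} and \ref{prop:straightening}, and this theorem just packages them together via the content decomposition.
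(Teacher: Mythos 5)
Your proposal is correct and follows essentially the same route as the paper's proof: spanning via Lemma~\ref{lem:lemma1} and Proposition~\ref{prop:straightening}, then linear independence by decomposing $M_R=\bigoplus_z M_{R,z}$, noting that the generators of $Q$ are content-homogeneous so that $E^{\lambda/\mu}\cong\bigoplus_z M_{R,z}/Q_z$, and applying Proposition~\ref{prop:linear_independece_of_semi_standard_young_tableau} content by content. Your explicit check that each generator type of $Q$ preserves content is a slightly more detailed justification of a step the paper simply asserts, but it is not a different argument.
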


\begin{proof}
We will show that the set $\mathcal{S}_{\lambda/\mu}$ is a basis for $E^{\lambda/\mu}$ by proving that it is a spanning set and is linearly independent.

We first show spanning. By Lemma~\ref{lem:lemma1}, the skew Schur module $E^{\lambda/\mu}$ is isomorphic to the quotient module $M_R/Q$, where $M_R$ is the free $R$-module with basis $\{e_F\}$ over all fillings $F$. The set of cosets $\{\bare{F}\}$ therefore spans $E^{\lambda/\mu}$. For any given filling $F$, its corresponding element $\bare{F}$ is either zero in the module or, by Proposition~\ref{prop:straightening}, can be expressed as a $R_0$-linear combination of elements from $\mathcal{S}_{\lambda/\mu}$. Thus, the set $\mathcal{S}_{\lambda/\mu}$ forms a spanning set for $E^{\lambda/\mu}$.

We now argue linear independence. The module $M_R$ decomposes into a direct sum over all possible contents $z$, $M_R = \bigoplus_z M_{R,z}$, where $M_{R,z}$ is the free submodule with basis $\{e_F \mid F \in F(\lambda/\mu, z)\}$. The relations that generate the submodule $Q$ are content-homogeneous; that is, they only relate fillings of the same content. Consequently, the submodule of relations also decomposes as a direct sum $Q = \bigoplus_z Q_z$, where $Q_z = Q \cap M_{R,z}$. The skew Schur module therefore has the direct sum decomposition
\[
E^{\lambda/\mu} \cong M_R/Q \cong \bigoplus_z (M_{R,z} / Q_z).
\]
To prove the linear independence of the spanning set $\mathcal{S}_{\lambda/\mu}$, consider a linear combination of its elements which equals zero in $E^{\lambda/\mu}$:
\[
\sum_{\bare{T} \in \mathcal{S}_{\lambda/\mu}} c_T \bare{T} = 0.
\]
Grouping the terms by content gives
\[
\sum_z \left(\sum_{T \in \operatorname{SSYT}(\lambda/\mu, z)} c_T \bare{T}\right) = 0.
\]
By the properties of a direct sum, an element is zero if and only if each of its components in the direct sum is zero. Therefore, for each content $z$, we must have
\[
\sum_{T \in \operatorname{SSYT}(\lambda/\mu, z)} c_T \bare{T} = 0
\]
in the submodule $M_{R,z}/Q_z$. By Proposition~\ref{prop:linear_independece_of_semi_standard_young_tableau}, for any fixed content $z$, the set $\{\bare{T} \mid T \in \operatorname{SSYT}(\lambda/\mu, z)\}$ is linearly independent. This implies that $c_T = 0$ for all $T \in \operatorname{SSYT}(\lambda/\mu, z)$. Since this holds for every content $z$, all coefficients in the original sum must be zero. Thus, the spanning set $\mathcal{S}_{\lambda/\mu}$ is linearly independent.

Since the set $\mathcal{S}_{\lambda/\mu}$ is a spanning set and is linearly independent, it forms a basis for $E^{\lambda/\mu}$. The existence of a basis implies that $E^{\lambda/\mu}$ is a free $R$-module.
\end{proof}

\section{Rearrangement Coefficients} \label{sec:rearrangement}
Throughout this section, fix a commutative ring $R$ and a free $R$-module $E$ of rank $m$ with ordered basis $\{e_1,\dots,e_m\}$. Let $R_0=\mathbb{Z}\cdot 1_R\subset R$ denote the prime subring. Fix partitions $\lambda$ and $\mu$ with $\mu\subseteq\lambda$, and consider the skew shape $\lambda/\mu$. Let $\lambda'$ and $\mu'$ denote the conjugate partitions. For each column index $c\in[\lambda_1]$, set $\ell'_c=\lambda'_c-\mu'_c$ and define the row-index sequence
\[
\row_c=(\row_{c,1},\ldots,\row_{c,\ell'_c})=(\mu'_c+1,\ldots,\lambda'_c),
\]
so that $\row_{c,i}$ is the row index of the $i$-th box (from top to bottom) in column $c$. Given a filling $F$ of shape $\lambda/\mu$ with entries in $\{1,\ldots,m\}$, expressing $\bare{F}$ as a $R_0$-linear combination of $\bare{S_i}$ with $S_i$ ranging over SSYT of shape $\lambda/\mu$ will be referred to as straightening in $E^{\lambda/\mu}$.

This section develops a framework for straightening via rearrangement coefficients in the skew Schur module. We introduce the action of the column Young subgroup $S_{\lambda/\mu}$ on fillings by independent column permutations and use it to define, for two fillings $F, S$ of the same shape and content, the rearrangement coefficient. We establish core structural properties of these coefficients and construct an $R$-linear functional that extracts them from determinantal images. Building on this, we define a new basis of $E^{\lambda/\mu}$ and prove an explicit, non-iterative straightening formula that expresses any filling in this basis with coefficients given by the corresponding rearrangement coefficients.

\subsection{Column permutations and straightening}
We now introduce the column permutation subgroup and its action on fillings. Define the Young subgroup
\[
S_{\lambda/\mu}\;:=\;\mathfrak{S}_{\ell'_1}\times\cdots\times \mathfrak{S}_{\ell'_{\lambda_1}},
\]
whose elements $\underline{\pi}=(\pi_1,\ldots,\pi_{\lambda_1})$ act by permuting entries independently within each column of $\lambda/\mu$. Composition, inversion, and sign are taken componentwise
\[
\underline{\pi}\,\underline{\sigma}=(\pi_1 \sigma_1,\ldots,\pi_{\lambda_1}\sigma_{\lambda_1}),\quad
\underline{\pi}^{-1}=((\pi_1)^{-1},\ldots,(\pi_{\lambda_1})^{-1}), \text{ and } \sign{\underline{\pi}} = \sign{\pi_1}\cdots\sign{\pi_{\lambda_1}},
\]
where we view \(\operatorname{sgn}:\mathfrak{S}_d\!\to\!\{\pm1\}\!\subset\! R_0\). Then $\sign{\underline{\pi}\, \underline{\sigma}}\!=\!\sign{\underline{\pi}} \sign{\underline{\sigma}}$ and $\sign{\underline{\pi}^{-1}}\!=\!\sign{\underline{\pi}}$.

For a filling $F$ and $\underline{\pi}\in S_{\lambda/\mu}$, define the filling $F_{\underline{\pi}}$ by column-wise permutation,
\begin{align*}
F_{\underline{\pi}}[\row_{c,i},c]&=F\big[\row_{c,\pi_c^{-1}(i)},\,c\big]\quad\text{for }1\le i\le \ell'_c,
\end{align*}
where the inverse ensures that the action is a left action. In $E^{\lambda/\mu}$ one has, by the column-alternating property of Definition~\ref{def:universal-property-skew-schur-module}, that
\[
\bare{F}\;=\;\sign{\underline{\pi}}\cdot \bare{F_{\underline{\pi}}}.
\]

Two fillings $F$ and $S$ have the same \emph{row content} if, for every fixed $r \in [\lambda'_1]$ we have the multiset equality
\[
\big\{\,F[r,c] \,\big|\, (r,c)\in \lambda/\mu\,\big\}
=
\big\{\,S[r,c] \,\big|\, (r,c)\in \lambda/\mu\,\big\}.
\]

The \emph{rearrangement subset} of $S_{\lambda/\mu}$ associated to $F,S \in F(\lambda/\mu, z)$ is the set
\begin{equation}\label{def:rearrangement_subset}
S_{\lambda/\mu}(F,S) \;=\; \big\{\, \underline{\pi}\in S_{\lambda/\mu} \ \big|\  F_{\underline{\pi}}\textrm{ and }S \textrm{ have the same row content}\, \big\}.
\end{equation}
Note that this definition is not symmetric with respect to $F$ and $S$.

\begin{definition}\label{def:rearrangement_coefficient}
Let $F, S \in F(\lambda/\mu, z)$. The \emph{rearrangement coefficient} of $F$ with respect to $S$ is
\[
\rcf{F}{S} = \sum_{\underline{\pi}\in S_{\lambda/\mu}(F,S)} \sign{\underline{\pi}}\,.
\]
\end{definition}

\begin{example}
Let $\lambda = (3, 2, 1)$, $\mu = (1,1)$ and $z=(2,1,1)$ with $F,S \in F(\lambda/\mu, z)$ such that
\[
\ytableausetup{mathmode,boxsize=1em,centertableaux}
F=\begin{ytableau}
\none & 2 & 1 \\
\none & 3 \\
1
\end{ytableau}
\qquad \textrm{and} \qquad
S=\begin{ytableau}
\none & 1 & 3 \\
\none & 2 \\
1
\end{ytableau}\, .
\]
Writing permutations in one-line notation, $\underline{\pi} = (1,21,1) \in S_{\lambda/\mu}(F, S)$ since
\[
\ytableausetup{mathmode,boxsize=1em,centertableaux}
F_{\underline{\pi}}=\begin{ytableau}
\none & 3 & 1 \\
\none & 2 \\
1
\end{ytableau}
\]
has the same row content as $S$. Trivially, $\underline{\pi}$ is the only element in $S_{\lambda/\mu}(F, S)$ and hence $\rcf{F}{S}=\sign{\underline{\pi}}\!=\!\sign{1}\sign{21}\sign{1} \!=\! -1$. Conversely, $S_{\lambda/\mu}(S, F)$ is empty and $\rcf{S}{F}=0$.
\end{example}

\begin{definition}\label{def:rowsort&sort}
For $F \in F(\lambda/\mu, z)$, the \emph{row-sorting} $\operatorname{rowsort}(F)$ is obtained by reordering the entries within each row of $F$ so that they are weakly increasing along the row. The \emph{sorting} $\operatorname{sort}(F)$ is obtained by first reordering the entries within each column so that they are weakly increasing downward, and then applying row-sorting, or equivalently,
\[
\operatorname{sort}(F)=\operatorname{rowsort}(\operatorname{colsort}(F)).
\]
\end{definition}

\begin{lemma}\label{lemma:fundRCoeff}
Let $F, T, S \in F(\lambda/\mu, z)$ and let $\underline{\sigma}, \underline{\gamma}, \underline{\sigma}', \underline{\gamma}' \in S_{\lambda/\mu}$. Suppose that
\[
\underline{\sigma}\,\underline{\pi}\, \underline{\gamma}\ \in\ S_{\lambda/\mu}(F,S)\quad\text{for all }\ \underline{\pi}\in S_{\lambda/\mu}(T,S),
\]
and
\[
\underline{\sigma}'\, \underline{\pi}'\, \underline{\gamma}'\ \in\ S_{\lambda/\mu}(T,S)\quad\text{for all }\ \underline{\pi}'\in S_{\lambda/\mu}(F,S).
\]
Then
\[
\rcf{F}{S}=\operatorname{sgn}(\underline{\sigma})\,\operatorname{sgn}(\underline{\gamma})\,\rcf{T}{S}
\;=\;
\operatorname{sgn}(\underline{\sigma}')\,\operatorname{sgn}(\underline{\gamma}')\,\rcf{T}{S}.
\]
\end{lemma}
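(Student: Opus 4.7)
The plan is to interpret each hypothesis as the well-definedness of a translation map between rearrangement subsets, upgrade these maps to bijections by a cardinality argument, and then reindex the defining sum of the rearrangement coefficient.

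First, I would define the translation map $\Phi \colon S_{\lambda/\mu}(T,S) \to S_{\lambda/\mu}(F,S)$ by $\underline{\pi} \mapsto \underline{\sigma}\,\underline{\pi}\,\underline{\gamma}$; the first hypothesis is exactly the statement that $\Phi$ lands in $S_{\lambda/\mu}(F,S)$, so $\Phi$ is well-defined. Symmetrically, the second hypothesis says that $\Phi' \colon S_{\lambda/\mu}(F,S) \to S_{\lambda/\mu}(T,S)$, $\underline{\pi}' \mapsto \underline{\sigma}'\,\underline{\pi}'\,\underline{\gamma}'$, is well-defined. Both $\Phi$ and $\Phi'$ are injective, because left and right multiplication by fixed elements of the group $S_{\lambda/\mu}$ are cancellative. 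Since $S_{\lambda/\mu}$ is finite, the rearrangement subsets are finite, and the existence of injections in both directions forces $|S_{\lambda/\mu}(T,S)| = |S_{\lambda/\mu}(F,S)|$. An injection between finite sets of equal cardinality is a bijection, so both $\Phi$ and $\Phi'$ are in fact bijections.

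With the bijections in hand, I would reindex the defining sum of $\rcf{F}{S}$ along $\Phi$ and pull the constant signs out of the sum using multiplicativity of $\sign{\cdot}$ on $S_{\lambda/\mu}$:
\begin{align*}
\rcf{F}{S}
&= \sum_{\underline{\tau} \in S_{\lambda/\mu}(F,S)} \sign{\underline{\tau}}
= \sum_{\underline{\pi} \in S_{\lambda/\mu}(T,S)} \sign{\underline{\sigma}\,\underline{\pi}\,\underline{\gamma}} \\
&= \sign{\underline{\sigma}}\,\sign{\underline{\gamma}} \sum_{\underline{\pi} \in S_{\lambda/\mu}(T,S)} \sign{\underline{\pi}}
= \sign{\underline{\sigma}}\,\sign{\underline{\gamma}}\,\rcf{T}{S}.
\end{align*}
An identical computation through the bijection $\Phi'$, with the roles of $F$ and $T$ interchanged, produces $\rcf{T}{S} = \sign{\underline{\sigma}'}\,\sign{\underline{\gamma}'}\,\rcf{F}{S}$. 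Since $\sign{\underline{\sigma}'}\,\sign{\underline{\gamma}'} \in \{\pm 1\}$ squares to $1$, multiplying both sides by this sign rearranges the identity into $\rcf{F}{S} = \sign{\underline{\sigma}'}\,\sign{\underline{\gamma}'}\,\rcf{T}{S}$, which is the second claimed equality.

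There is no substantive obstacle; the main conceptual point is recognizing that the pointwise containment statements in the two hypotheses encode a pair of mutual injections that, in the finite setting, automatically upgrade to bijections compatible with the sign homomorphism. The one subtlety worth flagging during write-up is that each hypothesis in isolation yields only an injection, not a bijection; both are required simultaneously to force the cardinality equality and hence both equalities in the conclusion.
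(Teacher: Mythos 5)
Your proposal is correct and follows essentially the same route as the paper's proof: the same translation maps $\Phi$ and $\Psi$, the same two-way-injection-plus-finiteness argument for bijectivity, and the same reindexing of the sign sums followed by multiplying the reverse identity by $\operatorname{sgn}(\underline{\sigma}')\operatorname{sgn}(\underline{\gamma}')$. Your explicit remark that both hypotheses are needed to upgrade the injections to bijections is exactly the point the paper leaves implicit, so nothing is missing.
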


\begin{proof}
Define $\Phi:S_{\lambda/\mu}(T,S)\to S_{\lambda/\mu}(F,S)$ by $\Phi(\underline{\pi})=\underline{\sigma}\, \underline{\pi}\, \underline{\gamma}$ and $\Psi:S_{\lambda/\mu}(F,S)\to S_{\lambda/\mu}(T,S)$ by $\Psi(\underline{\pi}')=\underline{\sigma}'\, \underline{\pi}'\, \underline{\gamma}'$. By the hypotheses, both maps are well defined and injective, and since the sets are finite, $\Phi$ and $\Psi$ are bijections. Therefore,
\[
\rcf{F}{S}
=\sum_{\underline{\pi}'\in S_{\lambda/\mu}(F,S)} \operatorname{sgn}(\underline{\pi}')
=\sum_{\underline{\pi}\in S_{\lambda/\mu}(T,S)} \operatorname{sgn}\big(\Phi(\underline{\pi})\big)
=\operatorname{sgn}(\underline{\sigma})\,\operatorname{sgn}(\underline{\gamma})\,\rcf{T}{S},
\]
and similarly
\[
\rcf{T}{S}
=\sum_{\underline{\pi}\in S_{\lambda/\mu}(T,S)} \operatorname{sgn}(\underline{\pi})
=\sum_{\underline{\pi}'\in S_{\lambda/\mu}(F,S)} \operatorname{sgn}\big(\Psi(\underline{\pi}')\big)
=\operatorname{sgn}(\underline{\sigma}')\,\operatorname{sgn}(\underline{\gamma}')\,\rcf{F}{S}.
\]
Multiplying the second equality by $\operatorname{sgn}(\underline{\sigma}')\,\operatorname{sgn}(\underline{\gamma}')$ and substituting into the first yields our desired equalities.
\end{proof}

\begin{corollary}\label{corollary:fundRCoeff}
Let $F, T \in F(\lambda/\mu, z)$ and $\underline{\pi}\in S_{\lambda/\mu}$. Then $\rcf{F_{\underline{\pi}}}{T}=\operatorname{sgn}(\underline{\pi})\,\rcf{F}{T}$.
\end{corollary}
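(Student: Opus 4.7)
The plan is to derive this as a direct consequence of Lemma~\ref{lemma:fundRCoeff} once the column-permutation action is recognized as a left action. First I would verify the identity $(F_{\underline{\pi}})_{\underline{\rho}} = F_{\underline{\rho}\underline{\pi}}$ by unwinding the defining formula: for each column $c$ and each index $1\le i\le \ell'_c$,
\[
(F_{\underline{\pi}})_{\underline{\rho}}[\row_{c,i},c] = F_{\underline{\pi}}[\row_{c,\rho_c^{-1}(i)},c] = F[\row_{c,\pi_c^{-1}(\rho_c^{-1}(i))},c] = F[\row_{c,(\rho_c\pi_c)^{-1}(i)},c],
\]
which equals $F_{\underline{\rho}\underline{\pi}}[\row_{c,i},c]$. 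The inverses in the action definition are precisely what make this a left action.

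Second, this identity yields a clean description of the new rearrangement subset. By definition, $\underline{\rho} \in S_{\lambda/\mu}(F_{\underline{\pi}}, T)$ iff $F_{\underline{\rho}\underline{\pi}}$ and $T$ have the same row content, iff $\underline{\rho}\underline{\pi} \in S_{\lambda/\mu}(F, T)$. Setwise this reads $S_{\lambda/\mu}(F_{\underline{\pi}}, T) = S_{\lambda/\mu}(F, T)\cdot\underline{\pi}^{-1}$, so right multiplication by $\underline{\pi}^{-1}$ furnishes an explicit bijection between the two rearrangement subsets.

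Third, I would apply Lemma~\ref{lemma:fundRCoeff} with the lemma's $F$ taken to be $F_{\underline{\pi}}$, its $T$ taken to be $F$, and its $S$ taken to be $T$, with the choices $\underline{\sigma} = \underline{\sigma}' = \mathrm{id}$, $\underline{\gamma} = \underline{\pi}^{-1}$, and $\underline{\gamma}' = \underline{\pi}$. Both hypotheses of the lemma follow immediately from the set equality in the previous paragraph, and the conclusion gives
\[
\rcf{F_{\underline{\pi}}}{T} = \sign{\mathrm{id}}\,\sign{\underline{\pi}^{-1}}\,\rcf{F}{T} = \sign{\underline{\pi}}\,\rcf{F}{T},
\]
using $\sign{\underline{\pi}^{-1}} = \sign{\underline{\pi}}$ as recorded earlier in the section. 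I anticipate no real obstacle; the proof is essentially bookkeeping. The one subtlety worth flagging is matching the inverse in the action definition with the correct choice of right multiplier ($\underline{\pi}^{-1}$, not $\underline{\pi}$), which is forced by the left-action identity and without which the bijection would land in the wrong coset.
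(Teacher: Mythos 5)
Your proposal is correct and follows essentially the same route as the paper: it establishes $(F_{\underline{\pi}})_{\underline{\rho}}=F_{\underline{\rho}\,\underline{\pi}}$, uses it to identify $S_{\lambda/\mu}(F_{\underline{\pi}},T)$ with $S_{\lambda/\mu}(F,T)\cdot\underline{\pi}^{-1}$, and then applies Lemma~\ref{lemma:fundRCoeff} with exactly the paper's choices $\underline{\sigma}=\underline{\sigma}'=\mathrm{id}$, $\underline{\gamma}=\underline{\pi}^{-1}$, $\underline{\gamma}'=\underline{\pi}$. The only difference is that you spell out the left-action identity, which the paper uses implicitly.
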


\begin{proof}
For any $\underline{\tau}\in S_{\lambda/\mu}(F,T)$, one has
\[
(F_{\underline{\pi}})_{\,\underline{\tau}\,\underline{\pi}^{-1}}
=F_{\,\underline{\tau}\,\underline{\pi}^{-1}\,\underline{\pi}}
=F_{\,\underline{\tau}},
\]
which has the same row content as $T$. Hence $\underline{\tau}\,\underline{\pi}^{-1}\in S_{\lambda/\mu}(F_{\underline{\pi}},T)$.

Conversely, if $\underline{\tau}'\in S_{\lambda/\mu}(F_{\underline{\pi}},T)$, then
\[
F_{\,\underline{\tau}'\,\underline{\pi}}
=(F_{\underline{\pi}})_{\,\underline{\tau}'},
\]
which has the same row content as $T$. Hence $\underline{\tau}'\,\underline{\pi}\in S_{\lambda/\mu}(F,T)$.

Applying Lemma~\ref{lemma:fundRCoeff} with $\underline{\sigma}=\underline{\text{id}}$, $\underline{\gamma}=\underline{\pi}^{-1}$ and $\underline{\sigma}'=\underline{\text{id}}$, $\underline{\gamma}'=\underline{\pi}$ yields
\[
\rcf{F_{\underline{\pi}}}{T}
=\operatorname{sgn}(\underline{\pi}^{-1})\,\rcf{F}{T}
=\operatorname{sgn}(\underline{\pi})\,\rcf{F}{T}. \qedhere
\]
\end{proof}

The following fact will be used repeatedly in the next few results.

\begin{lemma}\label{lemma:reorderIncreases}
Let $F\in F(\lambda/\mu,z)$ with $\bare{F}\neq 0$ in $E^{\lambda/\mu}$, and set $F_c:=\operatorname{colsort}(F)$. For any nontrivial $\underline{\pi}\in S_{\lambda/\mu}$ one has
\[
\operatorname{sort}(F)\ \prec\ \operatorname{rowsort}\bigl((F_c)_{\underline{\pi}}\bigr).
\]
\end{lemma}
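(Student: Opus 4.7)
The plan is to locate the topmost row at which $F_c$ and $(F_c)_{\underline{\pi}}$ disagree and to compare the row-sorted entries there.

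First, since $\bare{F}\neq 0$, Lemma~\ref{lem:lemma1}(i) rules out repeated entries in any column of $F$, so $F_c=\operatorname{colsort}(F)$ is strictly increasing down every column. Because $\underline{\pi}\neq\underline{\mathrm{id}}$, some component $\pi_c$ is nontrivial, and strict monotonicity of column $c$ forces the entry at the first moved position to change. Let $r$ be the topmost row index for which $(F_c)_{\underline{\pi}}$ and $F_c$ differ in some cell.

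Next, I claim that for every column $c$ with $(r,c)\in\lambda/\mu$, we have $(F_c)_{\underline{\pi}}[r,c]\ge F_c[r,c]$, with strict inequality for at least one such $c$. Minimality of $r$ implies that the entries strictly above row $r$ in column $c$ are identical in $F_c$ and $(F_c)_{\underline{\pi}}$. The column permutation $\pi_c$ preserves the multiset of entries in column $c$, so the multisets of entries from row $r$ downwards in column $c$ agree in $F_c$ and in $(F_c)_{\underline{\pi}}$. Since $F_c$ is strictly increasing on this segment, its top entry $F_c[r,c]$ is the minimum of that multiset, forcing $(F_c)_{\underline{\pi}}[r,c]\ge F_c[r,c]$, with strict inequality at any cell where the two actually differ. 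By the choice of $r$, strict inequality occurs for at least one $c$.

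Now compare the multisets of row-$r$ entries. Let $w_1\le\cdots\le w_n$ and $w'_1\le\cdots\le w'_n$ denote the sorted tuples obtained from row $r$ of $F_c$ and of $(F_c)_{\underline{\pi}}$. Pointwise domination yields $w'_k\ge w_k$ for all $k$ (the $k$-th order statistic is monotone in each argument), while the strict pointwise inequality at some cell gives $\sum w'_k>\sum w_k$. If $w'_k=w_k$ for every $k$, the sums would coincide, a contradiction; hence there is a smallest index $k^*$ with $w'_{k^*}>w_{k^*}$, and $w'_j=w_j$ for $j<k^*$.

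Finally, the reading word of a row-sorted filling, restricted to a given row, is precisely the sorted tuple of that row's multiset. For rows above $r$, $F_c$ and $(F_c)_{\underline{\pi}}$ agree cell-by-cell by minimality of $r$, so their row-sorted versions agree there. Concatenating rows top to bottom, the reading words of $\operatorname{sort}(F)=\operatorname{rowsort}(F_c)$ and $\operatorname{rowsort}((F_c)_{\underline{\pi}})$ coincide through row $r-1$ and through the first $k^*-1$ entries of row $r$, with the $k^*$-th entry strictly smaller in $\operatorname{sort}(F)$; this gives $\operatorname{sort}(F)\prec\operatorname{rowsort}((F_c)_{\underline{\pi}})$. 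The only nonroutine step is the multiset comparison at row $r$, which combines pointwise domination of order statistics with a sum argument to exclude equality; the remainder is bookkeeping that ties the reading-word order to row-by-row sorted entries.
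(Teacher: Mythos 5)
Your proof is correct and follows essentially the same route as the paper: locate the first row $r$ where $F_c$ and $(F_c)_{\underline{\pi}}$ disagree, show every row-$r$ entry can only increase (strictly somewhere), and compare the row-sorted words lexicographically. The only cosmetic differences are that you justify the strict increase via the minimum of the preserved suffix multiset of each column (the paper tracks the source row $s>r$ directly) and you use a sum argument to rule out equality of the order statistics, neither of which changes the argument in substance.
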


\begin{proof}
Since $\bare{F}\neq 0$, no column of $F$ contains duplicate entries, and so each column of $F_c$ is strictly increasing downward. Let $r$ be the smallest row where $(F_c)_{\underline{\pi}}$ and $F_c$ differ, and let $c'$ be the smallest column in that row with $(F_c)_{\underline{\pi}}[r,c']\neq F_c[r,c']$. In column $c'$, the post-permutation value at row $r$ equals some entry that originally lay in row $s$ of that same column. By minimality of $r$, one must have $s>r$. Since $F_c$ is strictly increasing down each column, this implies
\[
(F_c)_{\underline{\pi}}[r,c'] \;=\; F_c[s,c'] \;>\; F_c[r,c'].
\]
For any other column $c$ and the same row $r$, either the value is unchanged or it is likewise replaced by a value from a strictly lower row of that column, hence strictly larger. Therefore, the multiset of entries in row $r$ of $(F_c)_{\underline{\pi}}$ is obtained from the multiset of entries in row $r$ of $F_c$ by replacing a (nonempty) submultiset by strictly larger values, with all other entries unchanged. Rows $1,\dots,r-1$ agree.

Row-sorting replaces each row by its nondecreasing rearrangement. Let $k=\lambda_r-\mu_r$ and write the entries of the $r$th rows of $\operatorname{rowsort}(F_c)$ and $\operatorname{rowsort}\bigl((F_c)_{\underline{\pi}}\bigr)$, respectively, as
\[
a_1\le\cdots\le a_k,
\qquad \text{ and }
b_1\le\cdots\le b_k.
\]
By the replacement of a submultiset by strictly larger values, we have $a_j\le b_j$ for all $j$, with $a_{j_0}<b_{j_0}$ for at least one $j_0$. Since the reading word order is lexicographic by rows and within each row from left to right, rows $1,\dots,r-1$ agree and the first difference occurs in row $r$ at the minimal index where $a_j\neq b_j$, where necessarily $a_j<b_j$. Hence
\[
\operatorname{sort}(F)\ \prec\ \operatorname{rowsort}\bigl((F_c)_{\underline{\pi}}\bigr). \qedhere
\]
\end{proof}

\begin{proposition}\label{prop:rCoeffOrder}
Let $F \in F(\lambda/\mu, z)$ with $\bare{F}\neq 0$ in $E^{\lambda/\mu}$, and set $F_c:=\operatorname{colsort}(F)$. Then $F_c=F_{\underline{\sigma}}$ for some $\underline{\sigma}\in S_{\lambda/\mu}$. For $S\in \operatorname{SSYT}(\lambda/\mu, z)$ the following hold.
\begin{enumerate}
\item[(i)] If $S \prec \operatorname{sort}(F)$, then $\rcf{F}{S}=0$.
\item[(ii)] $\rcf{F_c}{\operatorname{sort}(F)}=1$.
\item[(iii)] $\rcf{F}{\operatorname{sort}(F)}=\operatorname{sgn}(\underline{\sigma})$.
\end{enumerate}
\end{proposition}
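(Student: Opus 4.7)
The plan is to build all three parts on top of Lemma~\ref{lemma:reorderIncreases} and Corollary~\ref{corollary:fundRCoeff}, after first clarifying how the column-sorted filling $F_c$ relates to $F$ through the column-permutation action.

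First, I would verify the claim $F_c=F_{\underline{\sigma}}$. Since $\bare{F}\neq 0$, Lemma~\ref{lem:lemma1}(i) forces the entries in each column of $F$ to be pairwise distinct. For each column $c$, the column of $F_c$ is the unique weakly increasing reordering of the column of $F$, and since these entries are distinct, there is a unique $\sigma_c\in \mathfrak{S}_{\ell'_c}$ realizing this reordering. Collecting these gives $\underline{\sigma}=(\sigma_1,\dots,\sigma_{\lambda_1})\in S_{\lambda/\mu}$ with $F_{\underline{\sigma}}=F_c$, and hence $F=(F_c)_{\underline{\sigma}^{-1}}$ via the left-action identity $(F_{\underline{\tau}})_{\underline{\pi}}=F_{\underline{\pi}\,\underline{\tau}}$.

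For (i), fix $S\in \operatorname{SSYT}(\lambda/\mu,z)$ with $S\prec \operatorname{sort}(F)$, and I would show $S_{\lambda/\mu}(F,S)=\emptyset$. Suppose $\underline{\pi}\in S_{\lambda/\mu}(F,S)$; then $F_{\underline{\pi}}$ and $S$ have the same row content, and since $S$ is semistandard its rows are already weakly increasing, so $\operatorname{rowsort}(F_{\underline{\pi}})=S$. Rewriting $F_{\underline{\pi}}=(F_c)_{\underline{\pi}\,\underline{\sigma}^{-1}}$, I split into two cases. If $\underline{\pi}\,\underline{\sigma}^{-1}=\underline{\mathrm{id}}$, then $F_{\underline{\pi}}=F_c$, forcing $S=\operatorname{rowsort}(F_c)=\operatorname{sort}(F)$, which contradicts $S\prec\operatorname{sort}(F)$. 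If $\underline{\pi}\,\underline{\sigma}^{-1}$ is nontrivial, Lemma~\ref{lemma:reorderIncreases} applied to $F$ gives $\operatorname{sort}(F)\prec \operatorname{rowsort}((F_c)_{\underline{\pi}\,\underline{\sigma}^{-1}})=S$, again contradicting the hypothesis $S\prec\operatorname{sort}(F)$.

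For (ii), I would apply the same reasoning to $F_c$ in place of $F$, noting that $\bare{F_c}=\pm\bare{F}\neq 0$ and $\operatorname{colsort}(F_c)=F_c$, so Lemma~\ref{lemma:reorderIncreases} gives $\operatorname{sort}(F_c)=\operatorname{sort}(F)\prec \operatorname{rowsort}((F_c)_{\underline{\pi}})$ for every nontrivial $\underline{\pi}\in S_{\lambda/\mu}$. As in (i), membership $\underline{\pi}\in S_{\lambda/\mu}(F_c,\operatorname{sort}(F))$ forces $\operatorname{rowsort}((F_c)_{\underline{\pi}})=\operatorname{sort}(F)$, which by the strict inequality above leaves only $\underline{\pi}=\underline{\mathrm{id}}$. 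Hence $\rcf{F_c}{\operatorname{sort}(F)}=\operatorname{sgn}(\underline{\mathrm{id}})=1$. Finally, (iii) is immediate from Corollary~\ref{corollary:fundRCoeff} with $\underline{\pi}=\underline{\sigma}$ and $T=\operatorname{sort}(F)$: that corollary together with (ii) gives $1=\rcf{F_c}{\operatorname{sort}(F)}=\rcf{F_{\underline{\sigma}}}{\operatorname{sort}(F)}=\operatorname{sgn}(\underline{\sigma})\rcf{F}{\operatorname{sort}(F)}$, so multiplying by $\operatorname{sgn}(\underline{\sigma})$ yields $\rcf{F}{\operatorname{sort}(F)}=\operatorname{sgn}(\underline{\sigma})$. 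The main potential pitfall I anticipate is keeping the left-action convention $(F_{\underline{\tau}})_{\underline{\pi}}=F_{\underline{\pi}\,\underline{\tau}}$ straight when translating between $F$ and $F_c$; once that bookkeeping is in place, the argument is essentially a direct application of Lemma~\ref{lemma:reorderIncreases} combined with the fact that SSYT are fixed by row-sorting.
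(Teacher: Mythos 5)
Your proof is correct and follows essentially the same route as the paper: reduce to the column-sorted filling $F_c$, use Lemma~\ref{lemma:reorderIncreases} to show nontrivial column permutations push the row-sorted result strictly above $\operatorname{sort}(F)$, and transfer the sign via Corollary~\ref{corollary:fundRCoeff}. The only cosmetic difference is that in (i) you show $S_{\lambda/\mu}(F,S)=\emptyset$ directly by rewriting $F_{\underline{\pi}}=(F_c)_{\underline{\pi}\,\underline{\sigma}^{-1}}$, whereas the paper first passes to $\rcf{F_c}{S}$ via Corollary~\ref{corollary:fundRCoeff}; both are valid and rest on the same lemmas.
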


\begin{proof}
Since $\bare{F}\neq 0$, no column of $F$ has duplicate entries, hence each column of $F_c$ is strictly increasing downward.

\noindent (i) By Corollary~\ref{corollary:fundRCoeff}, $\rcf{F_c}{S}=\operatorname{sgn}(\underline{\sigma})\,\rcf{F}{S}$ with $F_c=F_{\underline{\sigma}}$. It suffices to show $S\prec\operatorname{sort}(F_c)$ implies $\rcf{F_c}{S}=0$. Suppose there exists $\underline{\pi}\in S_{\lambda/\mu}(F_c,S)$. Then $S=\operatorname{rowsort}((F_c)_{\underline{\pi}})$. If $\underline{\pi}=\underline{\text{id}}$, then $S=\operatorname{rowsort}(F_c)=\operatorname{sort}(F)$, a contradiction. If $\underline{\pi}\neq\underline{\text{id}}$, Lemma~\ref{lemma:reorderIncreases}  gives
\[
\operatorname{sort}(F)\ \prec\ \operatorname{rowsort}\bigl((F_c)_{\underline{\pi}}\bigr)=S,
\]
again a contradiction. Hence $S_{\lambda/\mu}(F_c,S)=\varnothing$ and $\rcf{F_c}{S}=0$, whence $\rcf{F}{S}=0$.

\noindent (ii) The identity $\underline{\text{id}}$ is in $S_{\lambda/\mu}(F_c,\operatorname{sort}(F))$ since $\operatorname{rowsort}(F_c)=\operatorname{sort}(F)$. If $\underline{\pi}\neq\underline{\text{id}}$ and $(F_c)_{\underline{\pi}}$ has the same row content as $\operatorname{sort}(F)$, then
\[
\operatorname{rowsort}\bigl((F_c)_{\underline{\pi}}\bigr)=\operatorname{sort}(F).
\]
By Lemma~\ref{lemma:reorderIncreases}, $\operatorname{sort}(F)\prec \operatorname{rowsort}\bigl((F_c)_{\underline{\pi}}\bigr)$, yielding a contradiction. Thus $S_{\lambda/\mu}(F_c,\operatorname{sort}(F))=\{\underline{\text{id}}\}$ and $\rcf{F_c}{\operatorname{sort}(F)}=1$.

\noindent (iii) Using $F_c=F_{\underline{\sigma}}$, Corollary~\ref{corollary:fundRCoeff}, and part (ii) above gives
\[
\rcf{F}{\operatorname{sort}(F)}=\operatorname{sgn}(\underline{\sigma})\,\rcf{F_c}{\operatorname{sort}(F)}=\operatorname{sgn}(\underline{\sigma}).
\]\end{proof}

\subsection{An R-linear map}
\label{subsec:RLinear}
Fix a skew diagram $\lambda/\mu$, content $z$, and $S\in \operatorname{SSYT}(\lambda/\mu, z)$. Let $Z_{i,j}$ for $1 \leq i \leq \lambda'_1$ and $1 \leq j \leq m$ be a set of indeterminates, and $R[Z]$ the polynomial ring over $R$ in these indeterminates. The goal of this section is to construct an $R$-module homomorphism $\reval{S}:E^{\lambda/\mu} \rightarrow R$ that maps $F \in F(\lambda/\mu, z)$ to $\rcf{F}{S}$. This homomorphism $\reval{S}$ will be defined as the composition of two $R$-module homomorphisms.

In Proposition~\ref{prop:lemma3inFulton} we construct an $R$-module homomorphism $\varphi:E^{\lambda/\mu} \rightarrow R[Z]$ that maps $F \in F(\lambda/\mu, z)$ to $D_F$.

Let  $\mathfrak{C}_{-,S}$ be the map from $R[Z]$ to $R$ that sends $p \in R[Z]$ to the coefficient of the monomial
\[
M_S = \prod_{(r,c) \in \lambda/\mu} Z_{r, S[r,c]} = \displaystyle \prod_{j=1}^{\lambda_1} \prod_{i=1}^{\lambda_j'} Z_{i, S(i,j)},
\]
so that $\mathfrak{C}_{-,S}(p)=0$ if $M_S$ does not appear in $p$. This map is an $R$-module homomorphism.

Finally, we define the map $\reval{S}:E^{\lambda/\mu} \rightarrow R$ as the composition of $\varphi$ and $\mathfrak{C}_{-,S}$, that is,
\[
  \reval{S}:= \mathfrak{C}_{-,S} \circ \varphi.
\]

\begin{proposition}
\label{prop:theRModuleHom}
Fix a skew diagram $\lambda/\mu$, content $z$, and $S\in \operatorname{SSYT}(\lambda/\mu, z)$. The map $\reval{S}$ is an $R$-module homomorphism from $E^{\lambda/\mu}$ to $R$ that maps $F \in F(\lambda/\mu, z)$ to $\rcf{F}{S}$.
\end{proposition}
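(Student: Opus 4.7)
The plan splits naturally into two parts. First, the homomorphism property is essentially automatic: Proposition~\ref{prop:lemma3inFulton} gives that $\varphi$ is $R$-linear, and $\mathfrak{C}_{-,S}: R[Z] \to R$ is $R$-linear as the extraction of the coefficient of a fixed monomial, so the composite $\reval{S}$ is an $R$-module homomorphism. The substantive content is the identification $\reval{S}(\bare{F}) = \mathfrak{C}_{-,S}(D_F) = \rcf{F}{S}$, which requires reading off the coefficient of $M_S = \prod_{(r,c) \in \lambda/\mu} Z_{r, S[r,c]}$ in the polynomial $D_F$.

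To carry this out, I will expand each factor $D_{F,c} = \det(M_{F,c})$ via the Leibniz formula. Multiplying the resulting expansions over all columns produces
\[
D_F \;=\; \sum_{\underline{\sigma} \in S_{\lambda/\mu}} \sign{\underline{\sigma}} \prod_{(r,c) \in \lambda/\mu} Z_{r,\, F[\row_{c,\, \sigma_c(r-\mu'_c)},\, c]}.
\]
Comparing with the column-permutation action $F_{\underline{\pi}}[\row_{c,i},c] = F[\row_{c,\pi_c^{-1}(i)}, c]$ and setting $\underline{\pi} = \underline{\sigma}^{-1}$, the $\underline{\sigma}$-term rewrites as $\sign{\underline{\sigma}} \prod_{(r,c)} Z_{r, F_{\underline{\sigma}^{-1}}[r,c]}$.

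The decisive step is a row-by-row matching of monomials. Grouping the variable factors of each Leibniz term by row $r$, the polynomial identity $\prod_{(r,c)} Z_{r, F_{\underline{\sigma}^{-1}}[r,c]} = M_S$ holds if and only if, for every $r$, row $r$ of $F_{\underline{\sigma}^{-1}}$ shares its multiset of entries with row $r$ of $S$; this is precisely the condition $\underline{\sigma}^{-1} \in S_{\lambda/\mu}(F,S)$ from~\eqref{def:rearrangement_subset}. Collecting only these contributions and reindexing by $\underline{\pi} = \underline{\sigma}^{-1}$, which is a sign-preserving bijection on $S_{\lambda/\mu}$, gives
\[
\mathfrak{C}_{-,S}(D_F) \;=\; \sum_{\underline{\pi} \in S_{\lambda/\mu}(F,S)} \sign{\underline{\pi}} \;=\; \rcf{F}{S}.
\]
The only delicate point is tracking the inverse convention consistently between the Leibniz expansion and the left action on fillings, but once that bookkeeping is in place no substantial obstacle remains.
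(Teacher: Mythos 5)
Your proposal is correct and follows essentially the same route as the paper: expand $D_F$ column-by-column via the Leibniz formula, pass to inverse permutations so the terms match the column-permutation action $F_{\underline{\pi}}$, and identify the monomials equal to $M_S$ with the row-content condition defining $S_{\lambda/\mu}(F,S)$, so that extracting the coefficient yields $\rcf{F}{S}$. The only cosmetic difference is that the paper substitutes $\pi_c=\sigma_c^{-1}$ inside each determinant before assembling the product, whereas you reindex by $\underline{\pi}=\underline{\sigma}^{-1}$ at the end; the bookkeeping is equivalent.
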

\begin{proof}
We aim to show that $\mathfrak{C}_{-,S}(D_F) = \rcf{F}{S}$. By definition
\[
D_F = \prod_{c=1}^{\lambda_1} D_{F, c} = \prod_{c=1}^{\lambda_1} \det(M_{F,c}).
\]
Using the Leibniz formula for the determinant of each $M_{F,c}$,
\begin{align*}
D_F &= \prod_{c=1}^{\lambda_1} \left( \sum_{\sigma_c \in \mathfrak{S}_{\ell_c'}} \sign{\sigma_c} \prod_{i=1}^{\ell_c'} Z_{\row_{c,i}, F[\row_{c, \sigma_c(i)}, c]} \right).
\end{align*}
For each column $c$, we replace the permutation $\sigma_c$ with its inverse $\pi_c := \sigma_c^{-1}$. Since $\sign{\sigma_c} = \sign{\pi_c}$ and the map is a bijection on $\mathfrak{S}_{\ell_c'}$, the sum remains the same. Hence,
\[
D_F = \prod_{c=1}^{\lambda_1} \left( \sum_{\pi_c \in \mathfrak{S}_{\ell_c'}} \sign{\pi_c} \prod_{i=1}^{\ell_c'} Z_{\row_{c,i}, F[\row_{c, \pi_c^{-1}(i)}, c]} \right).
\]
By the definition of $\underline{\pi}=(\pi_1, \dots, \pi_{\lambda_1}) \in S_{\lambda/\mu}$ and its action on fillings,
\begin{align*}
D_F
&= \sum_{\underline{\pi} \in S_{\lambda/\mu}} \operatorname{sgn}(\underline{\pi})\,
\prod_{c=1}^{\lambda_1} \prod_{i=1}^{\ell_c'} Z_{\row_{c,i},\, F[\row_{c, \pi_c^{-1}(i)},\, c]} \\
&= \sum_{\underline{\pi} \in S_{\lambda/\mu}} \operatorname{sgn}(\underline{\pi})\,
\prod_{c=1}^{\lambda_1} \prod_{i=1}^{\ell_c'} Z_{\row_{c,i},\, F_{\underline{\pi}}[\row_{c,i},\, c]}\,.
\end{align*}

This can be rewritten by swapping the order of the products to be over all cells $(r,c)$ in the diagram of shape $\lambda/\mu$
\[
D_F = \sum_{\underline{\pi} \in S_{\lambda/\mu}} \sign{\underline{\pi}} \prod_{(r,c) \in \lambda/\mu} Z_{r, F_{\underline{\pi}}[r,c]}.
\]
The map $\mathfrak{C}_{-,S}$ extracts the coefficient of $M_S$.
Hence, a term corresponding to $\underline{\pi}$ contributes to the coefficient of $M_S$ if and only if the monomial part is identical to $M_S$. That is, if
\[
\prod_{(r,c) \in \lambda/\mu} Z_{r, F_{\underline{\pi}}[r,c]} = \prod_{(r,c) \in \lambda/\mu} Z_{r, S[r,c]}.
\]
This equality holds if and only if the multiset of values in each row of $F_{\underline{\pi}}$ is the same as the multiset of values in the corresponding row of $S$. This is precisely the condition for $\underline{\pi} \in S_{\lambda/\mu}(F, S)$ from \eqref{def:rearrangement_subset}.

Therefore, when we apply $\mathfrak{C}_{-,S}$ to $D_F$, we sum the signs of only those permutations $\underline{\pi}$ that are in $S_{\lambda/\mu}(F,S)$ to get
\[
\mathfrak{C}_{-,S}(D_F) = \sum_{\underline{\pi} \in S_{\lambda/\mu}(F,S)} \sign{\underline{\pi}} = \rcf{F}{S}. \qedhere
\]
\end{proof}

\subsection{The D-basis}

Fix the reading-word order $\prec$ on $\operatorname{SSYT}(\lambda/\mu, z)$ and label its elements so that
\[
S_{n} \prec S_{n-1} \prec \cdots \prec S_2 \prec S_1,
\]
where $n=|\operatorname{SSYT}(\lambda/\mu, z)|$. Let $\{\bare{S_1},\ldots,\bare{S_n}\}$ denote the corresponding basis of $M_{R,z}/Q_z$.

\begin{definition}[D-basis]\label{def:d_basis}
Define vectors $\{\bare{D_1},\ldots,\bare{D_n}\}\subset M_{R,z}/Q_z$ recursively by
\[
\bare{D_1}:=\bare{S_1},\qquad
\bare{D_i}:=\bare{S_i}-\sum_{j=1}^{i-1}\rcf{S_i}{S_j}\,\bare{S_j}\quad\text{for }i>1.
\]
The set $\{\bare{D_i}\}$ is a basis of $M_{R,z}/Q_z$ since the transition matrix from $\{\bare{S_i}\}$ is unitriangular.
\end{definition}

\section{A Non-iterative Straightening Algorithm for Skew Fillings} \label{sec:noniterativestraighten}
This section proves the main result of this paper, namely that every filling of fixed shape and content admits a non-iterative straightening expansion in the D-basis, with coefficients given explicitly by rearrangement coefficients. The argument proceeds in two steps. First, using the ordering of SSYT and the definition of the D-basis, we establish a unitriangular change of basis showing that each $\bare{S_i}$ expands as $\sum_j \rcf{S_i}{S_j}\,\bare{D_j}$. Second, for an arbitrary filling $\bare{F}$, we combine the $R$-linear evaluation maps $\reval{S}$ with the vanishing and normalization properties from Proposition~\ref{prop:rCoeffOrder} to obtain the straightening formula
\[
\bare{F}\;=\;\sum_{S_j\in \operatorname{SSYT}(\lambda/\mu, z)} \rcf{F}{S_j}\,\bare{D_j},
\]
which is non-iterative and computed directly from the rearrangement coefficients.
\begin{lemma}
\label{lemma:mainTheorem1Standard}
Fix a skew diagram $\lambda/\mu$ and content $z$. Let $S_i \in \operatorname{SSYT}(\lambda/\mu, z)$. Then
\[
\bare{S_i}
=\sum_{S_j \in \operatorname{SSYT}(\lambda/\mu, z)} \rcf{S_i}{S_j}\,\bare{D_{j}}.
\]
\end{lemma}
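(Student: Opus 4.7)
The plan is strong induction on $i$ in the reading-word labeling $S_n \prec \cdots \prec S_1$. The base case $i=1$ is immediate: by definition $\bare{D_1}=\bare{S_1}$, while Proposition~\ref{prop:rCoeffOrder}(i) and (iii) give $\rcf{S_1}{S_1}=1$ and $\rcf{S_1}{S_j}=0$ for every $j>1$ (since $S_j\prec S_1=\operatorname{sort}(S_1)$), so both sides of the claimed identity collapse to $\bare{S_1}$.

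For the inductive step, I would start by rearranging the defining recursion of the $D$-basis into the form $\bare{S_i}=\bare{D_i}+\sum_{j<i}\rcf{S_i}{S_j}\bare{S_j}$ and then invoke the inductive hypothesis $\bare{S_j}=\sum_k\rcf{S_j}{S_k}\bare{D_k}$ for every $j<i$. Interchanging the order of summation and collecting the coefficient of each $\bare{D_k}$ produces
\[
\bare{S_i}\;=\;\bare{D_i}\;+\;\sum_k\Bigl(\sum_{j<i}\rcf{S_i}{S_j}\,\rcf{S_j}{S_k}\Bigr)\bare{D_k}.
\]
Comparing this against the target $\sum_k\rcf{S_i}{S_k}\bare{D_k}$ and using the triangularity supplied by Proposition~\ref{prop:rCoeffOrder}---the normalization $\rcf{S_a}{S_a}=1$ together with the vanishing $\rcf{S_a}{S_b}=0$ whenever $S_b\prec\operatorname{sort}(S_a)$---the coefficients of $\bare{D_i}$ and of $\bare{D_k}$ for $k>i$ match trivially. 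For $k<i$, after extracting the $j=k$ contribution $\rcf{S_i}{S_k}\cdot 1$, the required equality reduces to the inner cancellation identity
\[
\sum_{k<j<i}\rcf{S_i}{S_j}\,\rcf{S_j}{S_k}\;=\;0.
\]

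The main obstacle is establishing this vanishing identity, which is equivalent to the nilpotency $(R-I)^2=0$ of the strictly lower-triangular part of the rearrangement matrix $R_{ij}=\rcf{S_i}{S_j}$. I see two plausible routes. The first is to recast the lemma as the orthogonality property $\reval{S_k}(\bare{D_j})=\delta_{jk}$ between the evaluation functionals from Subsection~\ref{subsec:RLinear} and the $D$-basis vectors, and then attack this directly by induction on $j$ while tracking the vanishing statements of Proposition~\ref{prop:rCoeffOrder}; since the functionals are $R$-linear and the $\bare{S_j}$ form a basis, matching $\reval{S_k}$-values pins down coefficients uniquely. The second route is combinatorial, constructing a sign-reversing involution on the set of pairs of column permutations $(\underline{\pi},\underline{\sigma})$ indexing nonzero summands of the double sum, using the structured conjugacy bijections of Lemma~\ref{lemma:fundRCoeff} to produce the cancellation directly from the signed counts defining the rearrangement coefficients.
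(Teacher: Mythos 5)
Your base case is fine, and your bookkeeping of the coefficients is accurate, but the inductive step has a genuine gap, and the gap is created by the form in which you use Definition~\ref{def:d_basis}. You rewrite the recursion as $\bare{S_i}=\bare{D_i}+\sum_{j<i}\rcf{S_i}{S_j}\,\bare{S_j}$, with the earlier \emph{SSYT} vectors on the right; after substituting the inductive hypothesis you are then forced to prove the quadratic cancellation $\sum_{k<j<i}\rcf{S_i}{S_j}\,\rcf{S_j}{S_k}=0$, equivalently $(\mathcal{R}-I)^2=0$ for the unitriangular rearrangement matrix $\mathcal{R}=(\rcf{S_i}{S_j})$, i.e.\ $\mathcal{R}^{-1}=2I-\mathcal{R}$. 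You explicitly leave this unproven and only sketch two possible attacks. Nothing in Proposition~\ref{prop:rCoeffOrder} or Lemma~\ref{lemma:fundRCoeff} yields such an identity, the paper never asserts it, and there is no apparent reason to expect it: the whole point of expanding in the D-basis is to avoid computing the inverse of $\mathcal{R}$, not to claim it has the closed form $2I-\mathcal{R}$. So, as written, the argument is incomplete precisely at its central step.

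The reading of Definition~\ref{def:d_basis} that the paper actually uses---in its proof of this lemma, in the orthogonality lemmas of Section~\ref{sec:ortho}, and in Corollary~\ref{cor:gs-for-D-basis} (Gram--Schmidt)---is the genuine recursion $\bare{D_i}=\bare{S_i}-\sum_{j<i}\rcf{S_i}{S_j}\,\bare{D_j}$; the $\bare{S_j}$ appearing in the displayed formula of the definition is evidently a typo (with $\bare{S_j}$ the definition would not be recursive at all, and the present lemma would become equivalent to the unverified identity above). With the recursion in terms of $\bare{D_j}$, no induction and no cancellation identity are needed: by Proposition~\ref{prop:rCoeffOrder}(i) the sum on the right-hand side collapses to $j\le i$, Proposition~\ref{prop:rCoeffOrder}(ii) gives coefficient $1$ at $j=i$, and substituting the defining recursion for $\bare{D_i}$ cancels the remaining sum over $j<i$, leaving $\bare{S_i}$. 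That one-step cancellation is the paper's proof. Your first fallback route, proving $\reval{S_k}(\bare{D_j})=\delta_{jk}$ by induction on $j$ using the $R$-linearity from Subsection~\ref{subsec:RLinear}, does go through once the recursion is taken with $\bare{D_j}$ (it is exactly the triangularity plus normalization of Proposition~\ref{prop:rCoeffOrder}), but in your proposal it, too, is only sketched.
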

\begin{proof}
For each $j > i$ we have $S_j \prec S_i = \sorting{S_i}$ and thus $\rcf{S_i}{S_j}=0$ by Proposition~\ref{prop:rCoeffOrder}(i). This, combined with Proposition~\ref{prop:rCoeffOrder}(ii), implies
\begin{align*}
\sum_{S_j \in \operatorname{SSYT}(\lambda/\mu, z)} \rcf{S_i}{S_j}\,\bare{D_{j}}
&= \bare{D_{i}} + \sum_{\substack{S_j \in \operatorname{SSYT}(\lambda/\mu, z) \\ j < i}} \rcf{S_{i}}{S_j}\,\bare{D_{j}} \\
&= \Bigl(\bare{S_i} - \sum_{\substack{S_j \in \operatorname{SSYT}(\lambda/\mu, z) \\ j < i}} \rcf{S_{i}}{S_j}\,\bare{D_{j}}\Bigr)
   + \sum_{\substack{S_j \in \operatorname{SSYT}(\lambda/\mu, z) \\ j < i}} \rcf{S_{i}}{S_j}\,\bare{D_{j}} \\
&= \bare{S_i}. \qedhere
\end{align*}
\end{proof}

\begin{proposition}
\label{prop:mainStraighten}
Let $F \in F(\lambda/\mu, z)$, with $\bare{F}=\sum\limits_ {S_i \in \operatorname{SSYT}(\lambda/\mu, z)}  a_i \bare{S_i}$ in $M_{R, z}/Q_z$ and $a_i \in R$. Then
\[
\rcf{F}{S_j} = \displaystyle \sum_{S_i \in \operatorname{SSYT}(\lambda/\mu, z)} a_i \rcf{S_i}{S_j}
\]
for each $S_j \in \operatorname{SSYT}(\lambda/\mu, z)$.
\end{proposition}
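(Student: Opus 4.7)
The plan is to apply the $R$-linear evaluation map $\reval{S_j}$ constructed in Subsection~\ref{subsec:RLinear} to both sides of the given expansion. By Proposition~\ref{prop:theRModuleHom}, $\reval{S_j}:E^{\lambda/\mu}\to R$ is a genuine $R$-module homomorphism and it sends any $\bare{G}$ with $G\in F(\lambda/\mu,z)$ to the rearrangement coefficient $\rcf{G}{S_j}$. Note that the inputs on both sides of the equation, namely $\bare{F}$ and each $\bare{S_i}$, live in the content-$z$ submodule $M_{R,z}/Q_z$, so $\reval{S_j}$ may be applied uniformly.

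Concretely, I would start from the hypothesis $\bare{F}=\sum_{S_i \in \operatorname{SSYT}(\lambda/\mu,z)} a_i\, \bare{S_i}$, apply $\reval{S_j}$, and use $R$-linearity to obtain
\[
\reval{S_j}(\bare{F}) \;=\; \sum_{S_i \in \operatorname{SSYT}(\lambda/\mu,z)} a_i\,\reval{S_j}(\bare{S_i}).
\]
Then I would substitute the identification $\reval{S_j}(\bare{G})=\rcf{G}{S_j}$ from Proposition~\ref{prop:theRModuleHom} on both sides, which yields exactly the claimed formula $\rcf{F}{S_j} = \sum_i a_i\,\rcf{S_i}{S_j}$.

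There is essentially no obstacle here; all of the work has already been done in establishing that $\reval{S_j}$ descends to a well-defined $R$-linear map on $E^{\lambda/\mu}$ (which in turn rested on the determinantal Garnir identity in Corollary~\ref{cor:garnir_determinant_relation}). The only minor point worth being explicit about is that the proposition is stated for fillings of a fixed content $z$ and SSYT of the same content $z$, matching the content of the functional $\reval{S_j}$, so no cross-content terms appear and the computation is truly a one-line consequence of linearity.
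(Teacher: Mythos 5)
Your proposal is correct and is exactly the paper's argument: apply the $R$-module homomorphism $\reval{S_j}$ from Proposition~\ref{prop:theRModuleHom} to both sides of the expansion and use linearity together with $\reval{S_j}(\bare{G})=\rcf{G}{S_j}$. No gaps; the content-matching remark is a fine (if optional) clarification.
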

\begin{proof}
By Proposition~\ref{prop:theRModuleHom}, $\reval{S_j}$ is an $R$-module homomorphism and applying it to both sides of $\bare{F}=\sum_ {S_i \in \operatorname{SSYT}(\lambda/\mu, z)}  a_i \bare{S_i}$ yields the desired equality for each $S_j \in \operatorname{SSYT}(\lambda/\mu, z)$.
\end{proof}

Applying Proposition~\ref{prop:mainStraighten} yields the non-iterative straightening formula. This is our first main result.
\thmA*
  \begin{proof}
    Suppose that $\bare{F} = \sum\limits_{S_i \in \operatorname{SSYT}(\lambda/\mu, z)} a_i \bare{S_i}$ in $M_{R, z}/Q_z$.
  By Proposition \ref{prop:mainStraighten}
  \[
  \rcf{F}{S_j} = \displaystyle \sum_{S_i \in \operatorname{SSYT}(\lambda/\mu, z)} a_i  \rcf{S_i}{S_j},
  \]
  for each $S_j \in \operatorname{SSYT}(\lambda/\mu, z)$. Therefore
  \begin{align*}
  0 &=\displaystyle \sum_{S_j \in \operatorname{SSYT}(\lambda/\mu, z)} \left( \rcf{F}{S_j} - \displaystyle \sum_{S_i \in \operatorname{SSYT}(\lambda/\mu, z)} a_i \rcf{S_i}{S_j} \right) \bare{D_{j}} \\
  &=\displaystyle \sum_{S_j \in \operatorname{SSYT}(\lambda/\mu, z)} \rcf{F}{S_j}  \bare{D_{j}} - \displaystyle \sum_{S_j \in \operatorname{SSYT}(\lambda/\mu, z)} \sum_{S_i \in \operatorname{SSYT}(\lambda/\mu, z)} a_i \rcf{S_i}{S_j} \bare{D_{j}} \\
  &= \displaystyle \sum_{S_j \in \operatorname{SSYT}(\lambda/\mu, z)} \rcf{F}{S_j} \bare{D_{j}} - \displaystyle \sum_{S_i \in \operatorname{SSYT}(\lambda/\mu, z)} a_i \sum_{S_j \in \operatorname{SSYT}(\lambda/\mu, z)} \rcf{S_i}{S_j} \bare{D_{j}}.
  \end{align*}
  Finally, applying Lemma~\ref{lemma:mainTheorem1Standard} to this equation yields
  \[
  \displaystyle \sum_{S_j \in \operatorname{SSYT}(\lambda/\mu, z)} \rcf{F}{S_j} \bare{D_{j}} - \displaystyle \sum_{S_i \in \operatorname{SSYT}(\lambda/\mu, z)} a_i \bare{S_i} = 0,
  \]
  and hence
  \[
  \bare{F} = \displaystyle \sum_{S_j \in \operatorname{SSYT}(\lambda/\mu, z)} \rcf{F}{S_j}  \bare{D_{j}}.\qedhere
  \]
  \end{proof}

  \begin{example}
  Fix a skew shape $\lambda / \mu$ with $\lambda=(3,2)$ and $\mu=(1)$, and content $z=(2,1,1)$. The three SSYT in $\operatorname{SSYT}(\lambda/\mu, z)$, ordered by $\prec$, are
  \[
  S_1=\begin{ytableau}
  \none & 1 & 3 \\
  1 & 2
  \end{ytableau},
  \qquad \qquad
  S_2=\begin{ytableau}
  \none & 1 & 2 \\
  1 & 3
  \end{ytableau},
  \qquad \qquad
  S_3=\begin{ytableau}
  \none & 1 & 1 \\
  2 & 3
  \end{ytableau}.
  \]

  The $D$-basis elements are
\[
\begin{aligned}
\bare{D_1}&=\bare{S_1},\\
\bare{D_2}&=\bare{S_2},\\
\bare{D_3}&=\bare{S_3}+\bare{D_1}=\bare{S_3}+\bare{S_1}.
\end{aligned}
\]
  Let $F \in F(\lambda/\mu, z)$ with
  \[
  F=\begin{ytableau}
  \none & 2 & 1 \\
  3 & 1
  \end{ytableau}.
  \]
  Since $\rcf{F}{S_1} = 0$, $\rcf{F}{S_2} = 1$, and $\rcf{F}{S_3} = -1$, Theorem \ref{thm:manResult1} gives
  \[
\begin{aligned}
\bare{F}&=\bare{D_2}-\bare{D_3}
= -\,\bare{S_1}+\bare{S_2}-\bare{S_3}.
\end{aligned}
\]
  \end{example}

\begin{lemma}\label{lemma:sortingInStd}
  Fix a skew diagram $\lambda/\mu$, content $z$, and $F\in F(\lambda/\mu, z)$  such that every column contains distinct entries. Then $\sorting{F}$ is a SSYT.
  \end{lemma}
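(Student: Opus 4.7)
I will write $T := \sorting{F}$ and $G := \operatorname{colsort}(F)$, so that $T = \rowsorting{G}$ by Definition~\ref{def:rowsort&sort}. Since every column of $F$ has distinct entries and column sorting preserves the multiset of each column, each column of $G$ is strictly increasing from top to bottom. By the definition of $\rowsorting{G}$, the rows of $T$ are weakly increasing from left to right, so it only remains to show that the columns of $T$ are strictly increasing.

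\textbf{Setup.} To verify column-strictness, I fix adjacent rows $r, r+1$ and a column $c$ with $(r,c), (r+1,c)\in\lambda/\mu$. Set $p_r = \lambda_r - \mu_r$ and $p_{r+1} = \lambda_{r+1} - \mu_{r+1}$, and let $a_1, \ldots, a_{p_r}$ (resp.\ $b_1, \ldots, b_{p_{r+1}}$) be the entries of row $r$ (resp.\ $r+1$) of $G$ listed in column order. Let $a'_1 \le \cdots \le a'_{p_r}$ and $b'_1 \le \cdots \le b'_{p_{r+1}}$ be the corresponding weakly-increasing rearrangements. With $i := c - \mu_r$ and $\Delta := \mu_r - \mu_{r+1} \ge 0$, one has $T[r, c] = a'_i$ and $T[r+1, c] = b'_{i + \Delta}$, so the goal reduces to the inequality $a'_i < b'_{i + \Delta}$.

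\textbf{Key step.} The columns $c'$ lying in both rows $r$ and $r+1$ are exactly $c' \in \{\mu_r + 1, \ldots, \lambda_{r+1}\}$, which yields (via column-strictness of $G$) the $k := \lambda_{r+1} - \mu_r$ inequalities
\[
a_s < b_{s + \Delta} \qquad \text{for } s = 1, \ldots, k,
\]
where $k + \Delta = p_{r+1}$ and $1 \le i \le k$. I argue $a'_i < b'_{i+\Delta}$ by contradiction: if $b'_{i + \Delta} \le a'_i$, then $S := \{s : b_s \le a'_i\}$ satisfies $|S| \ge i + \Delta$. For every $s \in S$ with $s > \Delta$ one has $s - \Delta \in \{1, \ldots, k\}$, so $a_{s - \Delta} < b_s \le a'_i$; hence the shift $s \mapsto s - \Delta$ injects $S \cap \{\Delta + 1, \ldots, p_{r+1}\}$ into $\{t : a_t < a'_i\}$, a set of size at most $i - 1$. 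Combined with the trivial bound $|S \cap \{1, \ldots, \Delta\}| \le \Delta$, this yields $|S| \le (i - 1) + \Delta$, contradicting $|S| \ge i + \Delta$.

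\textbf{Main obstacle.} The skew shape causes adjacent rows to begin at different columns, introducing the offset $\Delta$; the main difficulty is translating the entrywise column-strictness of $G$ into a positional inequality between the sorted rearrangements of two rows of different lengths. In the straight-shape case ($\mu$ empty) one has $\Delta = 0$ and the argument collapses to the classical fact that row-sorting a column-strict filling yields an SSYT.
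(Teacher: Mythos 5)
Your proof is correct, and it takes a genuinely different route from the paper: the paper disposes of this lemma in one line by citing \cite[Prop.~4.1]{Wil10} and asserting that the straight-shape argument applies ``mutatis mutandis'' to the skew case, whereas you supply a complete, self-contained argument. Your reduction is sound: columns of a skew diagram of partitions are contiguous intervals of rows, so column-strictness of $T=\sorting{F}$ only needs to be checked between adjacent rows $r,r+1$ at a common column $c$, and your indexing $T[r,c]=a'_i$, $T[r+1,c]=b'_{i+\Delta}$ with $i=c-\mu_r$, $\Delta=\mu_r-\mu_{r+1}$ is exactly right. The counting argument also checks out: $b'_{i+\Delta}\le a'_i$ forces at least $i+\Delta$ indices $s$ with $b_s\le a'_i$; the shift $s\mapsto s-\Delta$ together with the inequalities $a_{s-\Delta}<b_{s}$ (valid for $\Delta<s\le k+\Delta=p_{r+1}$, with $s-\Delta\le k\le p_r$ so the index is in range) maps these into indices $t$ with $a_t<a'_i$, of which there are at most $i-1$; adding the at most $\Delta$ indices $s\le\Delta$ gives $|S|\le i-1+\Delta$, a contradiction. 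What your approach buys is precisely the content the paper leaves implicit: the offset $\Delta$ created by the skew shape is the only new phenomenon relative to the partition case, and your exchange/counting step shows explicitly how it is absorbed, recovering the classical statement when $\Delta=0$. A minor presentational point: it would be worth stating explicitly (one sentence) that columns of $\lambda/\mu$ are contiguous, since that is what justifies checking only adjacent rows.
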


\begin{proof}
The argument of \cite[Prop.~4.1]{Wil10} applies mutatis mutandis; replace the partition shape by the skew shape $\lambda/\mu$ and require rows to be weakly, rather than strictly, increasing.
\end{proof}

The following corollary sharpens the straightening formula in two ways. It reduces the computational workload by restricting the sum to $j\le k$ when $\sorting{F}=S_k$, and it isolates a canonical leading term, the contribution at $S_k$, with respect to the fixed reading-word order.

\begin{corollary}\label{corollary:mainCorollaryStr1}
Fix a skew diagram $\lambda/\mu$, content $z$, and $F\in F(\lambda/\mu, z)$ with $\sorting{F}=S_k \in \operatorname{SSYT}(\lambda/\mu, z)$. Then
\begin{center}
$\bare{F} = \displaystyle \sum_{\substack{S_j \in \operatorname{SSYT}(\lambda/\mu, z) \\ j \leq k}} \rcf{F}{S_j} \bare{D_{j}}$.
\end{center}
\end{corollary}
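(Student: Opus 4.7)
The plan is to derive the sharpened expansion from Theorem~\ref{thm:manResult1} by verifying that the coefficient $\rcf{F}{S_j}$ vanishes whenever $j>k$. Applying Theorem~\ref{thm:manResult1} directly gives
\[
\bare{F}\;=\;\sum_{S_j\in \operatorname{SSYT}(\lambda/\mu, z)} \rcf{F}{S_j}\,\bare{D_j},
\]
so it suffices to show that every term indexed by $j>k$ is zero. By the reading-word labeling convention $S_n\prec \cdots \prec S_1$, the inequality $j>k$ is equivalent to $S_j\prec S_k = \sorting{F}$.

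Fix such an index $j$. I would split on whether $\bare{F}$ is zero. If $\bare{F}\neq 0$, then the condition $S_j \prec \sorting{F}$ is exactly the hypothesis of Proposition~\ref{prop:rCoeffOrder}(i), which yields $\rcf{F}{S_j}=0$. If instead $\bare{F}=0$, then the expansion from Theorem~\ref{thm:manResult1} reads $\sum_i \rcf{F}{S_i}\,\bare{D_i}=0$, and since $\{\bare{D_i}\}$ is a basis of $M_{R,z}/Q_z$ by Definition~\ref{def:d_basis}, every coefficient must vanish; in particular $\rcf{F}{S_j}=0$. In either case the term for $j$ contributes zero, and letting $j$ range over all indices greater than $k$ collapses the sum to $j\le k$, producing the asserted truncated expansion.

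The main (minor) obstacle is the $\bare{F}=0$ case. The hypothesis $\sorting{F}\in \operatorname{SSYT}(\lambda/\mu,z)$ is compatible with $\bare{F}=0$, since $\operatorname{sort}=\operatorname{rowsort}\circ\operatorname{colsort}$ can redistribute column-duplicate entries across different columns and produce an SSYT even from a filling with column duplicates. Because Proposition~\ref{prop:rCoeffOrder} assumes $\bare{F}\neq 0$, this case must be handled by the basis argument above rather than by a direct appeal to Proposition~\ref{prop:rCoeffOrder}(i). No additional combinatorics is required: the corollary is simply a truncation of Theorem~\ref{thm:manResult1} by the vanishing supplied by Proposition~\ref{prop:rCoeffOrder}(i), combined with the basis property of $\{\bare{D_i}\}$ to cover the degenerate case.
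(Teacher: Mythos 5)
Your proposal is correct and follows essentially the same route as the paper: truncate the expansion of Theorem~\ref{thm:manResult1} by using Proposition~\ref{prop:rCoeffOrder}(i) to kill the coefficients $\rcf{F}{S_j}$ with $j>k$ (i.e.\ $S_j\prec\sorting{F}$), treating the degenerate case separately. Your handling of the $\bare{F}=0$ case via linear independence of $\{\bare{D_i}\}$ is in fact a bit more explicit than the paper's ``the result is trivial,'' and your observation that the hypothesis $\sorting{F}\in\operatorname{SSYT}(\lambda/\mu,z)$ does not rule out $\bare{F}=0$ is a valid point.
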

\begin{proof}
If $F$ has duplicated entries in each column, then $\bare{F} = 0\in M_{R, z}/Q_z$ and the result is trivial. If $F$ has no duplicated entries, then $\sorting{F} \in \operatorname{SSYT}(\lambda/\mu, z)$ by Lemma \ref{lemma:sortingInStd}. Suppose that $\sorting{F}=S_k \in \operatorname{SSYT}(\lambda/\mu, z)$. By Proposition \ref{prop:rCoeffOrder}(i), we have $\rcf{S_i}{\sorting{F}} =0$ for all $S_i \in \operatorname{SSYT}(\lambda/\mu, z)$ such that $i < k$. Therefore,
\[
\bare{F} = \displaystyle \sum_{\substack{S_j \in \operatorname{SSYT}(\lambda/\mu, z) \\ j \leq k}} \rcf{F}{S_j} \bare{D_{j}}. \qedhere
\]
\end{proof}

\section{Orthogonality for Skew Schur Modules} \label{sec:ortho}
This section intertwines straightening combinatorics with geometry. The straightening rule is revealed as an orthogonal expansion. Endowing $E^{\lambda/\mu}$ with a natural sesquilinear form, the D-basis arises as the Gram-Schmidt orthogonalization of the SSYT basis with respect to the fixed reading-word order, and the rearrangement coefficients become the orthogonal coordinates of $\bare{F}$. In this light, the main theorem is a projection statement; $\bare{F}$ decomposes into its orthogonal components along $\{\bare{D_j}\}$, which both explains the triangular support and isolates a canonical leading term. The resulting picture is computationally efficient and structurally natural, yielding a canonical inner-product framework in which the combinatorial basis is geometrically distinguished.

Throughout this section let $R$ be a commutative ring equipped with an involutive automorphism ${}^{*}$ such that $R^{*}=\{\,r\in R:\,r^{*}=r\,\}$ is an ordered ring, $r\,r^{*}\ge 0$ for all $r\in R$, and the involution is proper in the sense that $a\,a^{*}=0$ implies $a=0$.

\begin{definition}\label{def:inner_product_r_module}
An inner product $R$-module is an $R$-module $M$ together with a map $\langle\cdot,\cdot\rangle:M\times M\to R$ such that for all $u,v,w\in M$ and $r\in R$:
\begin{enumerate}
\item[(i)] (Conjugate symmetry) $\langle u,v\rangle=\langle v,u\rangle^{*}$.
\item[(ii)] (Sesquilinearity) $\langle u+v,w\rangle=\langle u,w\rangle+\langle v,w\rangle$, $\ \langle ru,v\rangle=r\,\langle u,v\rangle$, and $\ \langle u,rv\rangle=r^{*}\,\langle u,v\rangle$.
\item[(iii)] (Positive-definiteness) $\langle u,u\rangle\ge 0$ in $R^{*}$, with $\langle u,u\rangle=0$ if and only if $u=0$.
\end{enumerate}
\end{definition}

For the remainder of this section, fix a skew shape $\lambda/\mu$ and an SSYT basis $\mathcal{S}_{\lambda/\mu}=\{\bare{S_1},\ldots,\bare{S_n}\}$ of $E^{\lambda/\mu}$ ordered $S_n\prec\cdots\prec S_1$. We construct the D-basis contentwise. For each content $z$, we build D-vectors from the SSYT of content $z$. When we assemble all contents into a single global SSYT order $\{S_1,\ldots,S_n\}$, we relabel the D-vectors to match this order, writing the associated vectors as $\{\bare{D_1},\ldots,\bare{D_n}\}$ so that $S_k$ corresponds to $\bare{D_k}$.

\begin{definition}\label{def:sesquilinear_form_local}
Define
\[
\langle \bare{S_i},\,\bare{D_{j}}\rangle \;:=\; \mathcal{R}_{S_i,S_j}.
\]
 Extend this rule sesquilinearly in the first variable and conjugate-linearly in the second, that is, for all $u=\sum_i r_i\bare{S_i}$ and $v=\sum_j s_j\bare{D_{j}}$ with $r_i,s_j\in R$,
\[
\langle u,\,v\rangle \;=\; \sum_{i,j} r_i\,s_j^{*}\,\langle \bare{S_i},\,\bare{D_{j}}\rangle.
\]
By convention, $\mathcal{R}_{F,S}=0$ if $F$ and $S$ have different contents.
\end{definition}

We first prove that the $D$-basis is orthonormal for $\langle\cdot,\cdot\rangle$, namely that $\langle \bare{D_i},\,\bare{D_j}\rangle=\delta_{ij}$ for all $i,j$.
From this we deduce that $\langle\cdot,\cdot\rangle$ is conjugate symmetric and positive definite, which will prove that $E^{\lambda/\mu}$ equipped with $\langle\cdot,\cdot\rangle$ is an inner product $R$-module.

\begin{lemma}\label{lemma:initial_orthogonality}
If $S_i \in \mathcal{S}_{\lambda/\mu}$, we have $\langle \bare{S_i}, \bare{D_{j}} \rangle = 0$ for $i < j$.
\end{lemma}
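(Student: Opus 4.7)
The plan is to unravel the definition of $\langle\cdot,\cdot\rangle$ on the pair $(\bare{S_i},\bare{D_j})$ and reduce the claim to a vanishing statement for rearrangement coefficients that is already in hand from Proposition~\ref{prop:rCoeffOrder}(i).

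By Definition~\ref{def:sesquilinear_form_local}, the value $\langle \bare{S_i},\bare{D_j}\rangle$ is exactly $\mathcal{R}_{S_i,S_j}$, so the assertion is equivalent to $\mathcal{R}_{S_i,S_j}=0$ whenever $i<j$. First I would dispose of the easy case in which $S_i$ and $S_j$ have distinct contents: in that situation the convention at the end of Definition~\ref{def:sesquilinear_form_local} gives $\mathcal{R}_{S_i,S_j}=0$ directly. One can also see this combinatorially, since no column permutation $\underline{\pi}\in S_{\lambda/\mu}$ changes the global content of a filling, so $S_{\lambda/\mu}(S_i,S_j)=\varnothing$ when the contents differ.

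In the remaining case, $S_i$ and $S_j$ lie in the same content block $\operatorname{SSYT}(\lambda/\mu,z)$, and by the global labeling convention the condition $i<j$ means $S_j\prec S_i$ in the reading-word order. The key observation is that $S_i$ is already semistandard, so $\operatorname{sort}(S_i)=S_i$. Hence $S_j\prec S_i=\operatorname{sort}(S_i)$, and Proposition~\ref{prop:rCoeffOrder}(i), applied with $F=S_i$ and $S=S_j$, yields $\mathcal{R}_{S_i,S_j}=0$.

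Combining the two cases we obtain $\langle \bare{S_i},\bare{D_j}\rangle=\mathcal{R}_{S_i,S_j}=0$ for all $i<j$, completing the proof. There is no real obstacle here; the work is entirely in verifying that the hypotheses of Proposition~\ref{prop:rCoeffOrder}(i) are met, and the only subtle point is keeping track of the ordering convention $S_n\prec\cdots\prec S_1$, under which a smaller index corresponds to a later tableau in the reading-word order.
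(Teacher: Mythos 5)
Your proposal is correct and follows essentially the same route as the paper: it handles the different-content case by the convention in Definition~\ref{def:sesquilinear_form_local}, and in the equal-content case uses $S_j\prec S_i=\operatorname{sort}(S_i)$ together with Proposition~\ref{prop:rCoeffOrder}(i) to conclude $\rcf{S_i}{S_j}=0$. The only additions (the combinatorial remark that column permutations preserve content, and the note on the index convention) are harmless elaborations of the paper's argument.
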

\begin{proof}
If $S_i$ and $S_j$ have different contents, then $\langle \bare{S_i},\bare{D_j}\rangle=0$ by convention. Otherwise, $i<j$ implies $S_j\prec S_i=\sorting{S_i}$, so $\rcf{S_i}{S_j}=0$ by Proposition~\ref{prop:rCoeffOrder}(i). By Definition~\ref{def:sesquilinear_form_local}, $\langle \bare{S_i},\bare{D_j}\rangle=\rcf{S_i}{S_j}=0$.
\end{proof}

\begin{lemma}\label{lemma:orthogonality_less}
For all $i<j$, $\langle \bare{D_i}, \bare{D_j} \rangle = 0$.
\end{lemma}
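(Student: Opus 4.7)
The plan is to recognize this as a direct consequence of Lemma~\ref{lemma:initial_orthogonality} combined with the triangular relationship between the D-basis and the SSYT basis. From Definition~\ref{def:d_basis}, $\bare{D_i}$ lies in the $R$-span of $\bare{S_1}, \ldots, \bare{S_i}$, so I can write
\[
\bare{D_i} \;=\; \sum_{k=1}^{i} c_{ik}\, \bare{S_k}
\]
with coefficients $c_{ik} \in R$ read off directly from the recursive definition. Invoking $R$-linearity of $\langle \cdot, \cdot \rangle$ in the first variable (Definition~\ref{def:sesquilinear_form_local}) gives
\[
\langle \bare{D_i}, \bare{D_j} \rangle \;=\; \sum_{k=1}^{i} c_{ik}\, \langle \bare{S_k}, \bare{D_j} \rangle.
\]

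When $i < j$, every index $k$ appearing in the sum satisfies $k \leq i < j$, so Lemma~\ref{lemma:initial_orthogonality} forces $\langle \bare{S_k}, \bare{D_j} \rangle = 0$ for each such $k$, and the sum collapses to zero. There is no real obstacle to overcome here; the only subtlety is noting that one must expand the \emph{first} slot of the form in the S-basis in order to apply the definition of $\langle \cdot, \cdot \rangle$, which is precisely what the triangular shape of the D-basis permits. In short, Lemma~\ref{lemma:orthogonality_less} is a formal consequence of the triangular change of basis together with the preceding lemma, and the proof requires only sesquilinear expansion and a single appeal to Lemma~\ref{lemma:initial_orthogonality}.
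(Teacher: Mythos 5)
Your proof is correct, but it takes a different route from the paper's. The paper proves the statement by strong induction on $i$: it expands $\bare{D_i}$ one level via the recursion in Definition~\ref{def:d_basis}, kills the term $\langle \bare{S_i},\bare{D_j}\rangle$ with Lemma~\ref{lemma:initial_orthogonality}, and disposes of the remaining terms $\langle \bare{D_k},\bare{D_j}\rangle$ ($k<i<j$) by the induction hypothesis. You instead bypass the induction entirely by fully expanding $\bare{D_i}=\sum_{k\le i}c_{ik}\,\bare{S_k}$ in the SSYT basis, which is legitimate since Definition~\ref{def:d_basis} already records that the transition matrix is unitriangular (so in particular triangular), and then you apply Lemma~\ref{lemma:initial_orthogonality} termwise, using that $k\le i<j$ and that the form is by Definition~\ref{def:sesquilinear_form_local} literally evaluated by expanding the first slot in the $S$-basis and the second in the $D$-basis. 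The two arguments use the same key input (Lemma~\ref{lemma:initial_orthogonality}); yours is shorter because it outsources the recursion-unwinding to the triangularity statement in the definition, whereas the paper's induction re-derives that triangular structure on the fly and, as a by-product, is phrased uniformly with the later inductive proof of Lemma~\ref{lemma:orthogonality_greater}. The only point worth being explicit about, which your sketch glosses over, is that in the global (all-contents) labeling of Section~\ref{sec:ortho} the expansion of $\bare{D_i}$ still only involves $\bare{S_k}$ with $k\le i$ (indeed only those of the same content as $S_i$, with the convention $\mathcal{R}_{F,S}=0$ across contents); this holds because the global reading-word order restricts to the contentwise order, and the paper's own proof relies on the same fact.
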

\begin{proof}
Proceed by strong induction on $i$. For $i=1$ and any $j>1$,
\[
\langle \bare{D_1}, \bare{D_j} \rangle
= \langle \bare{S_1}, \bare{D_j} \rangle
= \rcf{S_1}{S_j}
= 0,
\]
by Lemma~\ref{lemma:initial_orthogonality}. Fix $i\ge 1$ and assume $\langle \bare{D_k}, \bare{D_m} \rangle=0$ for all $k<i<m$. For $j>i$, Definition~\ref{def:d_basis} gives
\[
\langle \bare{D_i}, \bare{D_j} \rangle
= \Bigl\langle \bare{S_i}-\sum_{k<i}\rcf{S_i}{S_k}\,\bare{D_k},\,\bare{D_j}\Bigr\rangle
= \langle \bare{S_i}, \bare{D_j} \rangle - \sum_{k<i}\rcf{S_i}{S_k}\,\langle \bare{D_k}, \bare{D_j} \rangle.
\]
The first term is $\rcf{S_i}{S_j}=0$ by Lemma~\ref{lemma:initial_orthogonality}, and each term in the sum vanishes by the induction hypothesis (since $k<i<j$). Hence $\langle \bare{D_i}, \bare{D_j} \rangle=0$.
\end{proof}

\begin{lemma}\label{lemma:norm_property}
For all $i$, $\langle \bare{D_i}, \bare{D_i} \rangle = 1$.
\end{lemma}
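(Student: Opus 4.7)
The plan is to expand $\bare{D_i}$ using its recursive definition, apply sesquilinearity in the first slot, and then use the orthogonality established in Lemma~\ref{lemma:orthogonality_less} together with a direct evaluation of $\rcf{S_i}{S_i}$.

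Concretely, I would begin by writing $\bare{D_i} = \bare{S_i} - \sum_{k<i}\rcf{S_i}{S_k}\bare{D_k}$ as in Definition~\ref{def:d_basis}, and then substitute only into the first argument of $\langle \bare{D_i}, \bare{D_i}\rangle$, leaving the second argument as $\bare{D_i}$. Using sesquilinearity from Definition~\ref{def:sesquilinear_form_local}, this gives
\[
\langle \bare{D_i},\bare{D_i}\rangle \;=\; \langle \bare{S_i},\bare{D_i}\rangle \;-\; \sum_{k<i}\rcf{S_i}{S_k}\,\langle \bare{D_k},\bare{D_i}\rangle.
\]
Since each $k$ appearing in the sum satisfies $k<i$, Lemma~\ref{lemma:orthogonality_less} forces every $\langle \bare{D_k},\bare{D_i}\rangle$ to vanish, so the entire sum collapses and we are left with $\langle \bare{D_i},\bare{D_i}\rangle = \langle \bare{S_i},\bare{D_i}\rangle = \rcf{S_i}{S_i}$ by Definition~\ref{def:sesquilinear_form_local}.

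The remaining task is to verify that $\rcf{S_i}{S_i}=1$, and this is where I would invoke the normalization statement from Proposition~\ref{prop:rCoeffOrder}. Since $S_i$ is already an SSYT, it is its own column-sort, so $\operatorname{colsort}(S_i)=S_i$ corresponds to the trivial column permutation $\underline{\sigma}=\underline{\text{id}}$, and $\sorting{S_i}=S_i$. Proposition~\ref{prop:rCoeffOrder}(iii) then gives $\rcf{S_i}{\sorting{S_i}} = \operatorname{sgn}(\underline{\text{id}}) = 1$, which is exactly $\rcf{S_i}{S_i}=1$. Chaining the equalities yields $\langle \bare{D_i},\bare{D_i}\rangle = 1$.

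There is no serious obstacle here; the argument is essentially a one-line consequence of the previous two lemmas together with the explicit value $\rcf{S_i}{S_i}=1$. The only subtlety is to be careful that the sesquilinearity is applied in the first variable, so that the coefficients $\rcf{S_i}{S_k}$ appear without their involution (had they appeared in the second slot, they would have been conjugated, which would not affect the outcome here since those terms vanish, but cleanly expanding in the first slot makes this moot).
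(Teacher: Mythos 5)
Your proof is correct and follows the paper's argument essentially verbatim: expand $\bare{D_i}$ in the first slot, kill the sum with Lemma~\ref{lemma:orthogonality_less}, and evaluate $\langle \bare{S_i},\bare{D_i}\rangle=\rcf{S_i}{S_i}=1$. The only cosmetic difference is that you cite Proposition~\ref{prop:rCoeffOrder}(iii) with $\underline{\sigma}=\underline{\mathrm{id}}$ where the paper cites part (ii); since $\operatorname{colsort}(S_i)=S_i$ and $\sorting{S_i}=S_i$ for an SSYT, these are the same fact.
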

\begin{proof}
From the definition of $D_i$,
\[
\langle \bare{D_i}, \bare{D_i} \rangle
= \Bigl\langle \bare{S_i}-\sum_{k<i}\rcf{S_i}{S_k}\,\bare{D_k},\,\bare{D_i}\Bigr\rangle
= \langle \bare{S_i}, \bare{D_i} \rangle - \sum_{k<i}\rcf{S_i}{S_k}\,\langle \bare{D_k}, \bare{D_i} \rangle.
\]
By Lemma~\ref{lemma:orthogonality_less}, $\langle \bare{D_k}, \bare{D_i} \rangle=0$ for $k<i$, so the sum vanishes. Finally, $\langle \bare{S_i}, \bare{D_i} \rangle=\rcf{S_i}{S_i}=1$ by Proposition~\ref{prop:rCoeffOrder}(ii), whence $\langle \bare{D_i}, \bare{D_i} \rangle=1$.
\end{proof}

\begin{lemma}\label{lemma:orthogonality_greater}
For all $i>j$, $\langle \bare{D_i}, \bare{D_j} \rangle = 0$.
\end{lemma}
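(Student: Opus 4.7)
The plan is to prove this by strong induction on $i$, expanding $\bare{D_i}$ in the first argument via its defining recursion in Definition~\ref{def:d_basis} and then applying the three types of relations we already have, namely Lemma~\ref{lemma:initial_orthogonality}, Lemma~\ref{lemma:orthogonality_less}, and Lemma~\ref{lemma:norm_property}. The key observation is that although the form is only a priori sesquilinear rather than conjugate symmetric, $R$-linearity in the first slot is exactly what is needed when $\bare{D_i}$ sits on the left.

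Concretely, for fixed $i>j$ I would write
\[
\langle \bare{D_i},\bare{D_j}\rangle
\;=\;\Bigl\langle \bare{S_i}-\sum_{k<i}\rcf{S_i}{S_k}\bare{D_k},\,\bare{D_j}\Bigr\rangle
\;=\;\langle \bare{S_i},\bare{D_j}\rangle-\sum_{k<i}\rcf{S_i}{S_k}\,\langle \bare{D_k},\bare{D_j}\rangle,
\]
where the first term equals $\rcf{S_i}{S_j}$ by Definition~\ref{def:sesquilinear_form_local}. I would then analyze the sum by splitting the index range $k<i$ into three cases relative to $j$. For $k<j$, Lemma~\ref{lemma:orthogonality_less} gives $\langle \bare{D_k},\bare{D_j}\rangle=0$. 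For $k=j$, Lemma~\ref{lemma:norm_property} gives $\langle \bare{D_j},\bare{D_j}\rangle=1$, contributing precisely $\rcf{S_i}{S_j}$. For $j<k<i$, the pair $(k,j)$ satisfies $k>j$ and $k<i$, so the strong inductive hypothesis gives $\langle \bare{D_k},\bare{D_j}\rangle=0$. The sum therefore collapses to $\rcf{S_i}{S_j}$, which cancels the first term and yields $\langle \bare{D_i},\bare{D_j}\rangle=0$.

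For the base case $i=2$, the only possibility is $j=1$, and the same expansion (with no inductive term needed) gives $\langle \bare{D_2},\bare{D_1}\rangle=\rcf{S_2}{S_1}-\rcf{S_2}{S_1}\cdot 1=0$. There is no real obstacle here: the argument is a short bookkeeping induction, and the only subtlety worth flagging is that the expansion must be done in the first argument so that the scalar coefficients $\rcf{S_i}{S_k}\in R_0\subseteq R$ pull out cleanly via $R$-linearity, with no involution appearing; had we expanded $\bare{D_j}$ in the second slot we would have picked up a ${}^{*}$ and would need to know that rearrangement coefficients are fixed by the involution (which is true since they lie in $R_0$, but unnecessary to invoke).
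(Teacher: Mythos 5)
Your proposal is correct and matches the paper's own argument essentially verbatim: the paper also proceeds by strong induction on $i$, expands $\bare{D_i}$ via Definition~\ref{def:d_basis} in the first slot, splits the sum at $k=j$, and invokes Lemma~\ref{lemma:orthogonality_less}, Lemma~\ref{lemma:norm_property}, and the inductive hypothesis exactly as you do, with the same base case $i=2$.
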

\begin{proof}
We proceed by strong induction on $i$. For the base case $i=2$,
\[
\langle \bare{D_2}, \bare{D_1} \rangle
= \Big\langle \bare{S_2}-\rcf{S_2}{S_1}\,\bare{D_1},\,\bare{D_1}\Big\rangle
= \rcf{S_2}{S_1}-\rcf{S_2}{S_1}\,\langle \bare{D_1},\bare{D_1}\rangle
= 0
\]
by Lemma~\ref{lemma:norm_property}. Now fix $i\ge2$ and assume as the strong induction hypothesis that $\langle \bare{D_k},\bare{D_m}\rangle=0$ for all $k<i$ and $m<k$. For any $j<i$,
\begin{equation}\label{eq:orthgreater}
\langle \bare{D_i}, \bare{D_j} \rangle
= \Big\langle \bare{S_i}-\sum_{k<i}\rcf{S_i}{S_k}\,\bare{D_k},\,\bare{D_j}\Big\rangle
= \rcf{S_i}{S_j}-\sum_{k<i}\rcf{S_i}{S_k}\,\langle \bare{D_k},\bare{D_j}\rangle.
\end{equation}
We split the sum at $k=j$:
\[
\sum_{k<i}\rcf{S_i}{S_k}\,\langle \bare{D_k},\bare{D_j}\rangle
= \sum_{k<j}\rcf{S_i}{S_k}\,\langle \bare{D_k},\bare{D_j}\rangle
\;+\; \rcf{S_i}{S_j}\,\langle \bare{D_j},\bare{D_j}\rangle
\;+\; \sum_{j<k<i}\rcf{S_i}{S_k}\,\langle \bare{D_k},\bare{D_j}\rangle.
\]
By Lemma~\ref{lemma:orthogonality_less}, the first sum is $0$. By Lemma~\ref{lemma:norm_property}, the middle term equals $\rcf{S_i}{S_j}\cdot 1=\rcf{S_i}{S_j}$. By the induction hypothesis, the last sum is $0$. Hence the entire sum equals $\rcf{S_i}{S_j}$, which cancels the leading $\rcf{S_i}{S_j}$ on the right-hand side of \eqref{eq:orthgreater}, and therefore $\langle \bare{D_i}, \bare{D_j} \rangle=0$.
\end{proof}

\begin{corollary}\label{cor:orthonormal_basis}
The D-basis is orthonormal with respect to $\langle\cdot,\cdot\rangle$, that is, $\langle \bare{D_i}, \bare{D_j} \rangle = \delta_{ij}$, where $\delta_{ij}$ is the Kronecker delta.
\end{corollary}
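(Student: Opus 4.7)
The plan is to observe that this corollary is an immediate consolidation of the three preceding lemmas, which together exhaust every case for the pair $(i,j)$. Specifically, Lemma~\ref{lemma:orthogonality_less} handles the strictly upper-triangular case $i<j$, Lemma~\ref{lemma:orthogonality_greater} handles the strictly lower-triangular case $i>j$, and Lemma~\ref{lemma:norm_property} handles the diagonal case $i=j$. So the proof I would write is essentially one line: split into the three mutually exclusive cases and quote the appropriate lemma in each.

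More explicitly, I would fix $i,j$ and argue: if $i<j$, then $\langle \bare{D_i},\bare{D_j}\rangle = 0 = \delta_{ij}$ by Lemma~\ref{lemma:orthogonality_less}; if $i>j$, then $\langle \bare{D_i},\bare{D_j}\rangle = 0 = \delta_{ij}$ by Lemma~\ref{lemma:orthogonality_greater}; and if $i=j$, then $\langle \bare{D_i},\bare{D_j}\rangle = 1 = \delta_{ij}$ by Lemma~\ref{lemma:norm_property}. Combining these establishes the Kronecker-delta identity for all $(i,j)$.

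There is no real obstacle here; all of the technical work has already been done in the three lemmas above. The heavy lifting occurred in the inductive dismantling of the recursive definition of $\bare{D_i}$ using the vanishing $\rcf{S_i}{S_j}=0$ for $j>i$ (Proposition~\ref{prop:rCoeffOrder}(i)) and the normalization $\rcf{S_i}{S_i}=1$ (Proposition~\ref{prop:rCoeffOrder}(ii)), carried out respectively in Lemmas~\ref{lemma:orthogonality_less}, \ref{lemma:norm_property}, and \ref{lemma:orthogonality_greater}. The corollary itself is purely a packaging statement, and no additional algebraic or combinatorial input is required.
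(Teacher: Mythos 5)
Your proposal is correct and matches the paper's proof exactly: the paper likewise cites Lemmas~\ref{lemma:orthogonality_less}, \ref{lemma:norm_property}, and \ref{lemma:orthogonality_greater} for the three cases $i<j$, $i=j$, and $i>j$, with no additional argument needed.
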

\begin{proof}
Lemmas~\ref{lemma:orthogonality_less}, \ref{lemma:norm_property}, and \ref{lemma:orthogonality_greater} give the claim.
\end{proof}

With orthonormality established, we now verify that $E^{\lambda / \mu}$ equipped with $\langle\cdot,\cdot\rangle$ is an inner product $R$-module.

\begin{proposition}\label{prop:general_symmetry}
The sesquilinear form $\langle \cdot, \cdot \rangle$ on $E^{\lambda/\mu}$ is conjugate symmetric.
\end{proposition}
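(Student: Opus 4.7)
The plan is to combine the orthonormality of the D-basis established in Corollary~\ref{cor:orthonormal_basis} with general sesquilinearity of $\langle\cdot,\cdot\rangle$. For any $u,v\in E^{\lambda/\mu}$, I expand both in the D-basis as $u=\sum_i a_i\,\bare{D_i}$ and $v=\sum_j b_j\,\bare{D_j}$ with $a_i,b_j\in R$. Then by sesquilinearity and orthonormality,
\[
\langle u,v\rangle=\sum_{i,j} a_i\,b_j^{*}\,\langle \bare{D_i},\bare{D_j}\rangle=\sum_i a_i\,b_i^{*},
\qquad
\langle v,u\rangle=\sum_i b_i\,a_i^{*}.
\]
Applying the involution and using that ${}^{*}$ is an automorphism of the commutative ring $R$ with $(xy)^{*}=x^{*}y^{*}$ and $x^{**}=x$, I obtain
\[
\langle v,u\rangle^{*}=\sum_i (b_i\,a_i^{*})^{*}=\sum_i a_i\,b_i^{*}=\langle u,v\rangle,
\]
which is the desired conjugate symmetry.

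Before this computation, I would verify that $\langle\cdot,\cdot\rangle$ is $R$-sesquilinear in the standard sense, since Definition~\ref{def:sesquilinear_form_local} specifies it only via mixed expansions (first argument in the SSYT basis, second in the D-basis). This is a routine consequence of the defining formula $\langle u,v\rangle=\sum_{i,j} r_i\,s_j^{*}\,\mathcal{R}_{S_i,S_j}$: expanding $u+u'$ and $ru$ in the SSYT basis, and $v+v'$ and $sv$ in the D-basis, the required additivity and scalar identities fall out of $R$-bilinearity of the double sum together with the homomorphism property of ${}^{*}$. This is the same general sesquilinearity that is already tacitly used in the proofs of Lemmas~\ref{lemma:orthogonality_less}--\ref{lemma:orthogonality_greater}, where D-basis expansions were freely manipulated.

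I do not anticipate a serious obstacle here: once orthonormality of the D-basis is in hand, conjugate symmetry reduces to the standard observation that the Gram matrix with respect to an orthonormal basis is the identity, which is trivially Hermitian under any involution. The only bookkeeping required is the preliminary sesquilinearity check, so that one may legitimately expand both arguments in the D-basis when invoking Corollary~\ref{cor:orthonormal_basis} rather than being constrained to the mixed $(S,D)$ expansion of the original definition.
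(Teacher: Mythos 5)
Your proposal is correct and follows essentially the same route as the paper: expand both arguments in the D-basis, invoke sesquilinearity and the orthonormality from Corollary~\ref{cor:orthonormal_basis} to reduce to $\sum_k a_k b_k^*$, and apply the involution with commutativity of $R$. The preliminary remark on checking sesquilinearity from the mixed $(S,D)$ definition is sensible bookkeeping but does not change the argument.
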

\begin{proof}
Let $u,v\in E^{\lambda/\mu}$, and write (using that $\{\bare{D_1},\ldots,\bare{D_n}\}$ is a basis)
\[
u=\sum_{i=1}^{n} a_i\,\bare{D_i},\qquad v=\sum_{j=1}^{n} b_j\,\bare{D_j},\qquad a_i,b_j\in R.
\]
By sesquilinearity,
\[
\langle u,v\rangle=\Big\langle \sum_i a_i\,\bare{D_i},\,\sum_j b_j\,\bare{D_j}\Big\rangle
=\sum_{i,j} a_i\,b_j^{*}\,\langle \bare{D_i},\bare{D_j}\rangle.
\]
At this point we invoke Corollary~\ref{cor:orthonormal_basis} to use $\langle \bare{D_i},\bare{D_j}\rangle=\delta_{ij}$, which yields
\[
\langle u,v\rangle=\sum_{k=1}^{n} a_k\,b_k^{*}.
\]
Similarly,
\[
\langle v,u\rangle=\sum_{k=1}^{n} b_k\,a_k^{*}.
\]
Now using the involutive automorphism $^{*}$ and commutativity of $R$,
\[
(\langle v,u\rangle)^{*}
=\Big(\sum_{k} b_k\,a_k^{*}\Big)^{*}
=\sum_{k} (b_k\,a_k^{*})^{*}
=\sum_{k} b_k^{*}\,(a_k^{*})^{*}
=\sum_{k} a_k\,b_k^{*}
=\langle u,v\rangle.
\]
Hence $\langle u,v\rangle=(\langle v,u\rangle)^{*}$, as claimed.
\end{proof}

\begin{corollary}\label{cor:positive_semidefinite}
For all $u\in E^{\lambda/\mu}$, one has $\langle u,u\rangle\in R^{*}$ with $\langle u,u\rangle\ge 0$. Moreover, $\langle u,u\rangle=0$ if and only if $u=0$.
\end{corollary}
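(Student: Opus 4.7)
The plan is to reduce everything to the D-basis expansion and invoke Corollary~\ref{cor:orthonormal_basis} directly. First, I would write $u = \sum_{i=1}^n a_i \bare{D_i}$ with $a_i \in R$ (possible since the D-basis is a basis of $E^{\lambda/\mu}$), then apply sesquilinearity together with the orthonormality relation $\langle \bare{D_i}, \bare{D_j} \rangle = \delta_{ij}$ to collapse the resulting double sum. This yields the clean formula $\langle u, u \rangle = \sum_{i=1}^n a_i a_i^{*}$, which is the only identity we actually need.

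Next, I would verify termwise that each $a_i a_i^{*}$ belongs to $R^{*}$ and is nonnegative. The first follows from $(a_i a_i^{*})^{*} = a_i^{**} a_i^{*} = a_i a_i^{*}$, using commutativity of $R$ and the involutivity of ${}^{*}$. The second is exactly the standing hypothesis $r r^{*} \ge 0$ applied to $r = a_i$. Since $R^{*}$ is a subring (closed under addition) and the order on $R^{*}$ is compatible with addition, $\langle u, u \rangle = \sum_i a_i a_i^{*}$ lies in $R^{*}$ and satisfies $\langle u, u \rangle \ge 0$, which settles the first assertion.

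For the equivalence, one direction is immediate from sesquilinearity: if $u = 0$, then $\langle u, u \rangle = 0$. For the converse, suppose $\sum_i a_i a_i^{*} = 0$. Since each summand is nonnegative in the ordered ring $R^{*}$ and their sum is zero, the order axioms force $a_i a_i^{*} = 0$ for every $i$. The properness hypothesis on the involution (that $a a^{*} = 0$ implies $a = 0$) then gives $a_i = 0$ for all $i$, so $u = 0$.

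There is essentially no obstacle here; the corollary is a formal consequence of the orthonormality already proved in Corollary~\ref{cor:orthonormal_basis} together with the three ring-theoretic hypotheses on $(R, {}^{*})$ imposed at the start of Section~\ref{sec:ortho}. The only point requiring a small verification is the $*$-invariance of $a_i a_i^{*}$, which follows from involutivity and commutativity of $R$.
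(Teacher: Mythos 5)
Your proposal is correct and follows essentially the same route as the paper: expand $u$ in the D-basis, use sesquilinearity and Corollary~\ref{cor:orthonormal_basis} to get $\langle u,u\rangle=\sum_k a_k a_k^{*}$, and then invoke the standing hypotheses on $(R,{}^{*})$ (nonnegativity of $rr^{*}$, order properties of $R^{*}$, and properness) to conclude. Your explicit check that each $a_ia_i^{*}$ is fixed by the involution is a minor addition the paper leaves implicit, but the argument is the same.
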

\begin{proof}
Write $u=\sum_{k=1}^{n} a_k\,\bare{D_k}$. By sesquilinearity and Corollary~\ref{cor:orthonormal_basis},
\[
\langle u,u\rangle=\sum_{k=1}^{n} a_k\,a_k^{*}\in R^{*}.
\]
By the standing assumptions on $R$, each $a_k a_k^{*}\ge 0$, hence the sum is nonnegative in the ordered ring $R^{*}$. If $\langle u,u\rangle=0$, then by order properties of $R^{*}$ and $a_k a_k^{*}\ge 0$ for each $k$, it follows that $a_k a_k^{*}=0$ for all $k$, hence $a_k=0$ for all $k$ by the properness assumption. Therefore $u=0$. The converse is immediate.
\end{proof}

\begin{corollary}\label{cor:inner_product_space_corollary}
The module $E^{\lambda/\mu}$ is an inner product $R$-module with respect to $\langle\cdot,\cdot\rangle$. Moreover, if $R=\mathbb{R}$ or $\mathbb{C}$, then $E^{\lambda/\mu}$ is an inner product space.
\end{corollary}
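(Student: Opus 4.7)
The plan is to verify each of the three axioms of Definition~\ref{def:inner_product_r_module} by citing the results already assembled in this section, and then to reduce the specializations $R=\mathbb{R}$ and $R=\mathbb{C}$ to the general case. Sesquilinearity (axiom (ii)) is built into the very construction of $\langle\cdot,\cdot\rangle$: Definition~\ref{def:sesquilinear_form_local} specifies the values $\langle \bare{S_i},\bare{D_j}\rangle := \mathcal{R}_{S_i,S_j}$ and declares the form to extend $R$-linearly in the first slot and conjugate-$R$-linearly in the second. Since $\{\bare{S_i}\}$ is a basis by Theorem~\ref{thm:basis_of_schur_module} and $\{\bare{D_j}\}$ is a basis by the unitriangularity of the transition matrix in Definition~\ref{def:d_basis}, this prescription uniquely determines a sesquilinear form on all of $E^{\lambda/\mu}$, so axiom (ii) requires no further argument.

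Next, I would cite Proposition~\ref{prop:general_symmetry} for conjugate symmetry (axiom (i)) and Corollary~\ref{cor:positive_semidefinite} for positive-definiteness (axiom (iii)). Together these verify all three axioms and establish the first claim that $E^{\lambda/\mu}$ is an inner product $R$-module. For the specialization, I would check that $R=\mathbb{R}$ and $R=\mathbb{C}$ satisfy the standing hypotheses on $R$ used throughout this section. For $R=\mathbb{R}$, take the identity as the involution; then the fixed subring is $\mathbb{R}$ with its usual order, $rr^{*}=r^{2}\ge 0$, and $r^{2}=0$ forces $r=0$. For $R=\mathbb{C}$, take complex conjugation; the fixed subring is $\mathbb{R}$, again ordered, and $zz^{*}=|z|^{2}\ge 0$ with $|z|^{2}=0$ only if $z=0$. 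In both cases the general statement applies, and specializes to the usual Euclidean and Hermitian inner product structures.

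Since this is a packaging corollary, there is no genuine obstacle: the only thing to be mildly careful about is that the sesquilinearity extension in Definition~\ref{def:sesquilinear_form_local} is well-defined, which is automatic once we invoke the two basis statements, and that the axioms of an ordered involutive ring are indeed met by $\mathbb{R}$ and $\mathbb{C}$. The technical content was already absorbed by the preceding orthonormality arguments (Lemmas~\ref{lemma:orthogonality_less}--\ref{lemma:orthogonality_greater} and Corollary~\ref{cor:orthonormal_basis}), so the corollary is essentially an indexing exercise.
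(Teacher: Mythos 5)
Your proposal is correct and takes essentially the same approach as the paper, which likewise disposes of the axioms by noting sesquilinearity holds by construction, citing Proposition~\ref{prop:general_symmetry} for conjugate symmetry and Corollary~\ref{cor:positive_semidefinite} for positive-definiteness, and then observing the $\mathbb{R}$/$\mathbb{C}$ case is the usual notion. Your added checks (well-definedness of the extension via the two bases, and that $\mathbb{R}$ and $\mathbb{C}$ with their standard involutions satisfy the standing hypotheses on $R$) are harmless elaborations of the same argument.
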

\begin{proof}
Sesquilinearity holds by definition, conjugate symmetry is Proposition~\ref{prop:general_symmetry}, and positive-definiteness is Corollary~\ref{cor:positive_semidefinite}. Thus $E^{\lambda/\mu}$ equipped with $\langle\cdot,\cdot\rangle$ is an inner poduct $R$-module; over $\mathbb{R}$ or $\mathbb{C}$ this is the usual notion of inner product space.
\end{proof}

\begin{definition}
Let $M$ equipped with $\langle\cdot,\cdot\rangle$ be an inner product $R$-module, and let $\{v_1,\dots,v_n\}$ be linearly independent in $M$. One performs the \emph{Gram–Schmidt orthogonalization} process as follows. Set $w_1:=v_1$. For $k\ge2$, assume $w_1,\dots,w_{k-1}$ have been constructed and are pairwise orthogonal. If there exist coefficients $c_{k1},\dots,c_{k \, k-1}\in R$ solving the linear equations
\begin{align*}
c_{kj}\,\langle w_j,w_j\rangle \;=\; \langle v_k,w_j\rangle \qquad (1\le j<k),
\end{align*}
define
\begin{align*}
w_k \;:=\; v_k \;-\; \sum_{j<k} c_{kj}\,w_j.
\end{align*}
Then $w_k$ is orthogonal to $w_1,\dots,w_{k-1}$ and $\operatorname{span}_R\{w_1,\dots,w_k\}=\operatorname{span}_R\{v_1,\dots,v_k\}$. The normalization step is separate. If each $\langle w_j,w_j\rangle$ is a unit in $R$ that admits a square root, one may set $e_j:=\langle w_j,w_j\rangle^{-1/2} w_j$ to obtain an orthonormal sequence $(e_1,\dots,e_n)$. Over $R=\mathbb{R}$ or $\mathbb{C}$ the equations always have solutions $c_{kj}=\langle v_k,w_j\rangle/\langle w_j,w_j\rangle$ and square roots exist, so the process terminates and yields an orthonormal basis in finitely many steps.
\end{definition}

\begin{remark}
Over a general $R$ satisfying our standing assumptions, the algorithm produces an orthogonal (respectively, orthonormal) sequence if and only if, at each step $k$, the linear equations $c_{kj}\,\langle w_j,w_j\rangle=\langle v_k,w_j\rangle$ are solvable for all $j<k$ (respectively, solvable and each $\langle w_j,w_j\rangle$ is a unit admitting a square root in $R$). In particular, if at every stage $\langle w_j,w_j\rangle=1$, then $c_{kj}=\langle v_k,w_j\rangle$ and the output is automatically orthonormal, with no division or square roots at any step.
\end{remark}

\begin{corollary}\label{cor:gs-for-D-basis}
In $E^{\lambda/\mu}$ with the fixed SSYT order, taking $v_i:=\bare{S_i}$, the Gram--Schmidt orthogonalization exists at every step and yields $\bare{D_i}$, and each vector is already normalized,
\begin{align*}
\bare{D_i} \;=\; \bare{S_i} \;-\; \sum_{k<i} \langle \bare{S_i},\bare{D_k}\rangle \,\bare{D_k},
\qquad \langle \bare{D_i},\bare{D_i}\rangle \;=\; 1,
\end{align*}
so the D-basis is the result of the Gram--Schmidt process for any $R$ satisfying our standing assumptions.
\end{corollary}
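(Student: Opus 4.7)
The plan is to verify that the recursion defining the Gram--Schmidt process agrees, term by term, with the recursion defining the D-basis from Definition~\ref{def:d_basis}. The argument is a straightforward induction on the SSYT index $i$, where the key inputs are the sesquilinear form on SSYT-D pairs from Definition~\ref{def:sesquilinear_form_local} together with the normalization Lemma~\ref{lemma:norm_property}.

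First, I would handle the base case $i=1$. The Gram--Schmidt process sets $w_1:=v_1=\bare{S_1}$, while the D-basis sets $\bare{D_1}:=\bare{S_1}$, so the two agree. Normalization at index $1$ is $\langle \bare{D_1},\bare{D_1}\rangle = \rcf{S_1}{S_1} = 1$ by Proposition~\ref{prop:rCoeffOrder}(ii).

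For the inductive step, fix $i\ge 2$ and assume that $w_j=\bare{D_j}$ for all $j<i$ and that $\langle \bare{D_j},\bare{D_j}\rangle=1$ for $j<i$ by Lemma~\ref{lemma:norm_property}. At step $i$ of Gram--Schmidt, the defining equations become
\[
c_{ij}\cdot\langle \bare{D_j},\bare{D_j}\rangle \;=\; \langle \bare{S_i},\bare{D_j}\rangle \qquad (1\le j<i).
\]
Since the left-hand coefficients all equal $1$ and thus are units in $R$, these equations have unique solutions $c_{ij}=\langle \bare{S_i},\bare{D_j}\rangle$. Applying Definition~\ref{def:sesquilinear_form_local} gives $c_{ij}=\rcf{S_i}{S_j}$. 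Substituting into the Gram--Schmidt formula yields
\[
w_i \;=\; \bare{S_i} - \sum_{j<i} \rcf{S_i}{S_j}\,\bare{D_j},
\]
which matches the recursive definition of $\bare{D_i}$. Finally, $\langle \bare{D_i},\bare{D_i}\rangle=1$ by Lemma~\ref{lemma:norm_property}, so no separate normalization step is required.

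I do not expect any serious obstacles here. The entire content of the corollary is already encoded in (i) the definitional identification $\langle \bare{S_i},\bare{D_j}\rangle=\rcf{S_i}{S_j}$, (ii) the unitriangular recursion for $\bare{D_i}$, and (iii) the self-norming property $\langle \bare{D_j},\bare{D_j}\rangle=1$. The only subtlety worth flagging is the solvability condition of Gram--Schmidt over a general ring, which is where Lemma~\ref{lemma:norm_property} is crucial: because every diagonal Gram entry equals $1$, the linear equations for the $c_{ij}$ are tautologically solvable and no square roots or inversions are required. This is precisely what makes the algorithm succeed over any base ring $R$ satisfying the standing assumptions, rather than only over $\mathbb{R}$ or $\mathbb{C}$.
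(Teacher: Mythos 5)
Your proposal is correct and follows the same route the paper intends: the paper states this corollary without a separate proof, treating it as immediate from the identification $\langle \bare{S_i},\bare{D_j}\rangle=\rcf{S_i}{S_j}$ (Definition~\ref{def:sesquilinear_form_local}), the unitriangular recursion of Definition~\ref{def:d_basis}, and $\langle \bare{D_j},\bare{D_j}\rangle=1$ (Lemma~\ref{lemma:norm_property}), which is exactly the content your induction spells out. Your observation that the unit diagonal Gram entries make the Gram--Schmidt equations solvable without division or square roots is precisely the point of the paper's preceding remark, so nothing is missing.
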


We now have all ingedients needed to prove our second main result.

\begin{proof}[Proof of Theorem~\ref{thm:B}.]
Corollary~\ref{cor:inner_product_space_corollary} and Corollary~\ref{cor:gs-for-D-basis} immediately imply our desired result. \qedhere
\end{proof}

% \bibliographystyle{alpha}
% \bibliography{references}
\printbibliography
\end{document}